\documentclass[a4paper,fleqn]{cas-sc}

\hypersetup{ colorlinks=false }
\usepackage[numbers]{natbib}
\usepackage[labelsep=period]{caption}
\usepackage{subcaption}
\usepackage{placeins}
\usepackage{flafter}

\usepackage[contents={Preprint \today}, color=gray!70, scale=5, angle=45]{background}

\ExplSyntaxOn \cs_gset:Npn \__first_footerline: { } \cs_gset:Npn \__first_footerline_left: { } \cs_gset:Npn
\__first_footerline_right: { } \ExplSyntaxOff

\newtheorem{theorem}{Theorem}

\newtheorem{proposition}[theorem]{Proposition}
\newproof{proof}{Proof}

\newdefinition{remark}{Remark}

\def\tsc#1{\csdef{#1}{\textsc{\lowercase{#1}}\xspace}}
\tsc{WGM}
\tsc{QE}
\tsc{EP}
\tsc{PMS}
\tsc{BEC}
\tsc{DE}

\begin{document}

\let\WriteBookmarks\relax \def\floatpagepagefraction{1} \def\textpagefraction{.001}
\shorttitle{Submitted to Journal}
\shortauthors{X. XXX et~al.}

\title [mode = title]{A Fourth-order Conservative Adaptive Multiresolution Wavelet Upwind Scheme for Compressible Flows}                      

\author[1,2]{Bing Yang}
\credit{Conceptualization, Methodology, Investigation, Formal analysis, Software, Writing-original draft}

\affiliation[1]{organization={School of Civil Engineering and Mechanics, Lanzhou University}, city={LanZhou},
postcode={730000}, state={Gansu}, country={China}}

\affiliation[2]{organization={Key Laboratory of Mechanics on Disaster and Environment in Western China (Lanzhou
                University), the Ministry of Education, Lanzhou University}, city={Lanzhou}, postcode={730000},
                country={China}}

\author[1,2]{Xiaojing Liu}
\credit{Methodology, Software, Formal analysis}

\author[1,2]{Youhe Zhou}
\credit{Conceptualization, Methodology}

\author[3]{Feng Xiao}
\credit{Conceptualization, Methodology, Supervision, Writing-review and editing}
\affiliation[3]{organization={Department of Mechanical Engineering, Institute of Science Tokyo}, addressline={2-12-1
Ookayama, Meguro-ku}, city={Tokyo}, postcode={152-8550}, country={Japan}}

\author[1,2]{Jizeng Wang}
\cormark[1]
\cortext[cor1]{Corresponding author.}
\ead{jzwang@lzu.edu.cn}
\credit{Conceptualization, Methodology, Supervision, Funding acquisition, Writing-review and editing}

\begin{abstract}
A fourth-order conservative adaptive multiresolution average-interpolating wavelet upwind scheme is proposed for
compressible flows governed by hyperbolic conservation laws. A family of asymmetric average-interpolating wavelets with
upwind properties is constructed for conservative finite volume discretization, while symmetric average-interpolating
wavelets are employed for multiresolution decomposition and reconstruction of physical variables in the adaptive
procedure. This unified cell-average wavelet framework exploits the upwind property of asymmetric wavelets and the
data-compression efficiency of symmetric wavelets. Since both the conservative discretization and the adaptive
multiresolution representation are constructed from cell-average quantities, the proposed scheme preserves strict
conservation during both numerical evolution and adaptive cell redistribution. Unlike hybrid adaptive wavelet methods
that use wavelets mainly for data compression and mesh adaptation, the present adaptive wavelet upwind scheme utilizes
average-interpolating wavelet multiresolution approximation to reconstruct the interface values directly for numerical
flux evaluation, thereby avoiding additional ghost-cell marking and reconstruction near coarse--fine mesh interfaces.
The boundary variation diminishing (BVD) reconstruction is incorporated at the finest resolution level to achieve
non-oscillatory shock-capturing capability. Numerical tests demonstrate that the proposed scheme achieves the expected
fourth-order accuracy, maintains conservation errors close to machine precision, and controls numerical errors around
the prescribed threshold. The proposed method also sharply captures shock waves and contact discontinuities without
spurious oscillations and resolves multiscale smooth structures through a sparse adaptive representation. These results
indicate that the proposed scheme provides an efficient, conservative, and reliable approach for high-resolution
simulations of compressible flows.

\end{abstract}



\begin{keywords}
Average-interpolating wavelet \sep Multiresolution analysis \sep Conservative adaption \sep Wavelet upwind scheme \sep
Boundary variation diminishing \sep Hyperbolic conservation law 
\end{keywords}

\maketitle

\section{Introduction}
Compressible flows governed by hyperbolic conservation laws often involve multiscale smooth structures, such as acoustic
waves and vortices, as well as various discontinuities, including shock waves and contact
discontinuities~\cite{Shu2020}. The coexistence of these complex flow features poses a continuing challenge to the
development of accurate, robust, and efficient numerical schemes for compressible flows. When such problems are solved
on fixed meshes, a fine resolution must be employed over the entire computational domain to resolve multiscale smooth
structures and capture localized discontinuities. However, fine resolution is typically required only in limited regions
that contain discontinuities, steep gradients, or small-scale flow structures. Numerical methods on the fixed meshes
therefore introduce many unnecessary degrees of freedom in smooth large-scale regions, resulting in excessive
computational cost and memory consumption. In contrast, adaptive numerical methods provide an effective way to address
this difficulty by dynamically concentrating fine cells in regions with localized flow features while retaining coarse
cells in smooth regions. In this manner, they can significantly improve computational efficiency while maintaining the
required resolution and accuracy and therefore have become an important approach for high-resolution simulations of
large-scale compressible flows~\cite{Wang2013}. 

For compressible flows, adaptive numerical methods are generally expected to have several fundamental properties,
including conservativity, high-order accuracy, high-resolution shock capture capability, and reliable adaptive error
estimation. Conservativity is essential for ensuring that discontinuities propagate at the correct speeds and are
captured at the accurate locations~\cite{LeVeque1992}. High-order accuracy, together with high-resolution
shock-capturing capability, allows flow structures over a broad range of frequency components to be accurately resolved
using a limited number of degrees of freedom. In addition, reliable adaptive error estimation is required to assess and
control the accuracy of the computed solution~\cite{Tang2023}. 

Among various adaptive strategies, adaptive mesh refinement (AMR) is currently one of the most widely used techniques in
computational fluid dynamics. Therefore, we discuss the above properties of AMR-based adaptive methods. The construction
of conservative adaptive numerical methods requires two key ingredients: a conservative discretization on general
nonuniform meshes and a conservative reconstruction procedure during adaptive mesh redistribution. Conservative methods
formulated in terms of cell averages, such as the finite volume method (FVM), discontinuous Galerkin (DG)
method~\cite{Cockburn1989}, and multi-moment constrained finite volume method~\cite{Ii2009}, can naturally preserve
conservation on general nonuniform meshes. By contrast, conservative finite difference schemes based on nodal values,
i.e., the Shu--Osher conservative finite difference formulation~\cite{Shu1989}, guarantee conservation only on uniform
meshes and on certain nonuniform meshes generated by quadratic or exponential mappings, but are generally
non-conservative on arbitrary nonuniform meshes~\cite{Merriman2003}. In addition to the underlying discretization, the
reconstruction of physical variables during mesh adaptation must also be conservative. Sebastian and
Shu~\cite{Sebastian2003} noted that interpolation procedures not based on cell averages are non-conservative. Therefore,
cell-average-based conservative interpolation or reconstruction is required to maintain conservation during adaptive
mesh updates. Consequently, in the AMR framework, adaptive finite volume methods~\cite{Alauzet2016} and adaptive DG
methods~\cite{Remacle2003} generally preserve conservation, whereas adaptive finite difference methods usually fail to
achieve strict conservation on general adaptive meshes~\cite{Merriman2003,Chen2016,Li2015}.

Second-order AMR-based adaptive numerical methods have been well
developed~\cite{Alauzet2016,Tang2018AAAS,Alauzet2016Anisotropic}. To further improve their accuracy, resolution, and
computational efficiency, considerable efforts have been devoted to high-order adaptive numerical methods. Chen et
al.~\cite{Chen2016} combined high-order finite difference weighted essentially non-oscillatory (WENO) schemes with AMR
techniques and developed an adaptive method on curvilinear meshes. They noted that the resulting method is not strictly
conservative, with conservation errors remaining at the same order as the truncation errors of the scheme. Schaal et
al.~\cite{Schaal2015} constructed adaptive high-order DG methods and demonstrated their efficiency advantage over
low-order adaptive methods. Zhang and Groth~\cite{Zhang2011} developed an anisotropic block-based AMR technique combined
with a fourth-order finite volume central essentially non-oscillatory (ENO) scheme. Qi et al.~\cite{Qi2024} further
integrated a third-order discontinuous Galerkin method with the immersed boundary method to efficiently simulate
compressible flows around complex geometries on Cartesian meshes.

Error estimation in AMR techniques aims to identify flow regions that significantly affect the resolution of flow
structures or the prediction of aerodynamic quantities of interest. By evaluating local cell-wise errors, adaptive
indicators are devised to determine whether the corresponding cells should be refined or coarsened~\cite{Tang2023}.
Therefore, error estimation serves as a core ingredient of AMR techniques and is essential for ensuring the accuracy,
efficiency, and reliability of adaptive computations. Existing error-estimation strategies for AMR techniques can be
broadly classified into three categories: flow-feature-based methods~\cite{Tang2020flowfeature}, adjoint-based
methods~\cite{Park2004}, and Hessian-based methods~\cite{Habashi2000}. Flow-feature-based methods refine regions where
prominent structures, such as shocks, shear layers, and separated vortices, are expected to make dominant contributions
to the numerical error. Typical examples include indicators based on shock sensors~\cite{Tang2020flowfeature} and
vortex-identification criteria~\cite{Kamkar2011}. These methods are simple to implement and effective in improving the
resolution of targeted flow features. However, they are often problem-dependent and do not guarantee a reduction in the
global error~\cite{Roy2009}. Adjoint-based methods are goal-oriented approaches in which mesh refinement is guided by
the sensitivity of a prescribed quantity. By solving the corresponding adjoint problem, adjoint-weighted error
indicators can be constructed to identify regions that contribute most to the error in the target
functional~\cite{Park2004}. These methods can generate cost-effective meshes for practical simulations; however, the
construction and robust solution of suitable adjoint problems remain nontrivial, especially for nonlinear compressible
flows with discontinuities, thereby hindering their practical applications~\cite{Fidkowski2011,Alauzet2012Adjoint}.
Hessian-based criteria construct anisotropic meshes by equidistributing a local interpolation-error
estimate~\cite{Frey2005}. They can generate highly stretched and directionally aligned elements, thereby substantially
reducing the number of cells required to achieve a prescribed accuracy. However, the interpolation-error criterion does
not necessarily provide a direct estimate of the actual discretization error of the governing partial differential
equations (PDEs)~\cite{Alauzet2016Anisotropic}. In addition, its extension to high-order discretizations is nontrivial.
The standard Hessian approach is associated with linear interpolation error, whereas the interpolation error of a
$p$th-order approximation is governed by $(p+1)$th-order derivatives. Therefore, the construction of suitable high-order
metric tensors further increases the algorithmic complexity, especially in three dimensions~\cite{Fidkowski2011}.

Although conservative and high-order adaptive numerical methods have been developed within the AMR framework, there
remain several limitations. Some high-order adaptive schemes do not strictly preserve the conservation property on
general adaptive meshes. Existing indicators are often problem-dependent, and reliable error estimation remains
difficult for nonlinear compressible flows with discontinuities. Addressing these issues requires a cell-average-based
numerical tool capable of identifying, localizing and capturing flow features across different scales, while
quantitatively estimating their scale-dependent magnitudes. Wavelet theory naturally provides such a framework due to
its inherent multiresolution analysis, localization in both physical and scale spaces, and flexible choice of basis
functions ~\cite{Yang2024Advances}. These features make wavelets particularly attractive for developing adaptive
numerical methods for compressible flows.

The basic principle of wavelet-based adaptive methods is to use the magnitudes of wavelet coefficients as indicators of
local flow features. Mesh refinement is performed in regions where the wavelet coefficients exceed a prescribed
threshold, typically corresponding to local steep gradients, discontinuities, or high-frequency smooth
structures~\cite{Harten1995CPAM,Cohen2003,Vasilyev2000}. Accordingly, wavelet components whose coefficients are below
the threshold are discarded, resulting in a sparse multiresolution representation of the function. Wavelet-based
adaptive multiresolution methods therefore possess inherent advantages in adaptive indication, adaptive error
estimation, and sparse data representation. For the adaptive indication, wavelet multiresolution analysis and
localization in physical and scale spaces provide a natural way to identify, localize, and capture local solution
features. The decay behavior of wavelet coefficients can further be used to distinguish strong discontinuities from
high-frequency smooth structures. Together with wavelet-threshold filtering, these properties make adaptive mesh
updating straightforward~\cite{Vasilyev1996,Schneider2010}. From the viewpoint of error estimation, wavelet
multiresolution analysis can quantitatively characterize the magnitudes of solution features at different scales,
thereby enabling accurate error estimation for numerical solutions. Moreover, effective control of the global error can
be achieved by specifying appropriate thresholding parameters for wavelet coefficients at different resolution
levels~\cite{Harten1993}. From the perspective of function approximation, adaptive nonlinear wavelet approximation is
highly efficient for representing functions whose Besov regularity is stronger than their Sobolev regularity, such as
functions with localized steep gradients and boundary-layer structures. Consequently, high data compression ratios can
be obtained while preserving the essential localized features of the solution~\cite{DeVore1998,Deiterding2016}.

Wavelet-based adaptive numerical methods for compressible flows can be mainly classified into two categories according
to how wavelet multiresolution analysis is incorporated into the numerical framework. In the first category, scaling and
wavelet functions are applied directly as basis functions to construct the discretization scheme through wavelet
multiresolution approximations. Meanwhile, the multiresolution framework is also employed to design the adaptive
algorithm. These methods are referred to as pure wavelet numerical methods, mainly including wavelet Galerkin methods
and wavelet collocation methods~\cite{Schneider2010,Yang2023CF}. Minbashian et al.~\cite{Minbashian2017} proposed an
adaptive wavelet Streamline-Upwind Petrov-Galerkin (SUPG) method by borrowing the idea of the finite element SUPG
method. Pereira et al.~\cite{Pereira2023} showed that the dynamic adaptation procedure of adaptive methods intrinsically
introduces a non-smooth discrete operator associated with energy dissipation, which contributes to the stable solution
of hyperbolic conservation laws. In contrast, wavelet collocation methods are simpler to implement and computationally
more efficient. Early studies on wavelet collocation methods mainly employed symmetric interpolating wavelets as basis
functions and successfully applied them to small-viscosity convection--diffusion
problems~\cite{Vasilyev1996,Bertoluzza1996}. However, symmetric interpolating wavelet collocation schemes behave
similarly to central finite difference schemes for convective-term discretization, since their modified wave-number
analysis indicates zero numerical dissipation over the wave-number range. As a result, numerical oscillations may arise
in simulations of low-viscosity or inviscid compressible flows, thereby limiting their practical
applications~\cite{Yang2024Advances}. Inspired by classical upwind schemes in finite difference and finite volume
methods~\cite{Harten1983}, we have recently constructed asymmetric interpolating wavelet basis functions with
upwind-biased properties and proposed high-order adaptive multiresolution wavelet collocation upwind
schemes~\cite{Yang2023CF}. In addition to their aforementioned advantages in adaptive indication, adaptive error
estimation, and data compression, pure wavelet-based adaptive numerical methods can achieve improved spectral resolution
compared with conventional polynomial-based upwind schemes~\cite{Yang2024EML}. Moreover, their inherent low-pass
filtering property can effectively suppress high-frequency numerical oscillations, leading to improved accuracy and
stability in flow problems under extreme conditions~\cite{Yang2025AMS}. However, the wavelet basis functions used in the
above adaptive pure wavelet numerical methods, such as Daubechies wavelets~\cite{Restrepo1995}, semi-orthogonal
wavelets~\cite{Minbashian2017}, and interpolating wavelets~\cite{Vasilyev1996,Yang2023CF}, are generally not constructed
based on cell averages of the function. Consequently, these methods can at most preserve conservation in an approximate
sense, but cannot guarantee strict conservation. 

In addition, Alpert's multiwavelets~\cite{Alpert1993}, which provide $L^2$-basis functions through a multiresolution
analysis on partitioned elements, have also been employed in pure wavelet-based adaptive numerical methods for
conservation laws. In the adaptive multiresolution DG framework developed by M\"uller and
co-workers~\cite{Hovhannisyan2014,Gerhard2016,Gerhard2015}, the DG approximation space is represented by element-wise
scaling and wavelet functions of multiwavelets, and the governing equations are discretized in the corresponding
multiwavelet space. In such multiwavelet-based multiresolution DG methods, the conservation property is inherited from
the underlying DG weak formulation, together with the preservation of the cell-average coefficients in the multiwavelet
representation. Since multiwavelets may have jumps inside an element, the internal numerical flux does not cancel in the
evolution equations for the detail coefficients, and the corresponding integrals must be split across the internal
discontinuity. This additional treatment, together with multiscale transformations, thresholding, and adaptive grid
reconstruction, further increases the algorithmic complexity of multiwavelet-based adaptive
methods~\cite{Hovhannisyan2014}.

The second category refers to hybrid wavelet-based methods, where wavelet multiresolution analysis is only used in the
adaptive mesh-update procedure. In this type of method, wavelets are employed to detect discontinuities, estimate local
multiscale errors, guide mesh refinement or coarsening, and reconstruct data during adaptive mesh updates, while the
governing equations are discretized by classical numerical schemes. Harten~\cite{Harten1993} introduced symmetric
average-interpolating wavelets for constructing conservative adaptive multiresolution finite volume schemes. It should
be noted that, although these wavelets are constructed from cell averages, they retain many favorable properties similar
to those of interpolating wavelets within the multiresolution framework, including compact support, shape adjustability,
smoothness, polynomial reconstruction, and refinability~\cite{Harten1995CPAM,Harten1994JCP,Harten1997}. Most existing
hybrid wavelet-based adaptive methods are therefore constructed by coupling average-interpolating wavelets with
classical conservative discretization schemes, and thus generally preserve conservation. 

Harten and co-workers~\cite{Harten1995CPAM,Harten1994JCP,Harten1996SIAM} developed conservative adaptive finite volume
shock-capturing schemes based on symmetric average-interpolating wavelets. In these methods, the wavelet multiresolution
decomposition of physical variables is performed on a uniform mesh to locate discontinuities. Then, ENO reconstruction
is applied only in the vicinity of discontinuities, while simpler linear schemes are used in smooth regions. This
strategy reduces the computational cost associated with ENO reconstruction while retaining the shock-capturing
capability of the scheme. However, both the spatial discretization and time evolution are still carried out on the
finest uniform mesh. Consequently, the reduction in overall computational cost is only partially achieved. Cohen et
al.~\cite{Cohen2003} extended Harten's approach to dynamically adaptive meshes and proposed fully adaptive
multiresolution finite volume schemes for conservation laws, in which second-order ENO schemes were combined with
average-interpolating wavelets. Roussel et al.~\cite{Roussel2004} subsequently extended the fully adaptive
multiresolution finite volume scheme to parabolic problems. Castro et al.~\cite{Castro2016} developed high-order
adaptive multiresolution finite volume methods by combining high-order WENO schemes with average-interpolating wavelets.
Their results showed that high-order adaptive methods require fewer computational resources and achieve higher
efficiency than low-order ones. Since hybrid adaptive wavelet methods discretize the governing equations using classical
numerical schemes, additional ghost cells must be marked near coarse--fine mesh interfaces for flux evaluation. The
required physical variables are reconstructed through wavelet multiresolution approximation before the numerical fluxes
are computed by the underlying scheme. This extra reconstruction procedure increases algorithmic complexity and may
reduce computational efficiency.

These observations suggest a promising alternative: using average-interpolating wavelets both as basis functions for the
conservative discretization and as the foundation of the adaptive strategy. Such a formulation is expected to inherit
many advantages of existing pure wavelet methods, including high-order accuracy, high resolution, robustness, reliable
adaptive error estimation. At the same time, wavelet basis functions constructed from cell averages provide a natural
mechanism for enforcing strict conservation. Moreover, because the discretization and the adaptive representation are
constructed within the same wavelet multiresolution framework, additional ghost-cell marking and reconstruction near
coarse--fine cell interfaces are no longer required for flux evaluation. All interface fluxes can instead be computed
directly and efficiently through the wavelet multiresolution approximation. In this study, we develop a conservative
adaptive multiresolution average-interpolating wavelet upwind scheme that goes beyond using wavelet multiresolution
analysis merely for adaptive procedures. The proposed conservative adaptive wavelet upwind scheme fully exploits the
upwind stability of asymmetric average-interpolating wavelets for conservative discretization and the data-compression
efficiency of symmetric wavelets for adaptive representation within the unified average-interpolating wavelet framework,
thereby providing a novel and efficient approach for high-resolution simulations of compressible flows.

The rest of this paper is organized as follows. Section~\ref{Sec:Theory_and_methods} introduces the multiresolution
analysis of average-interpolating wavelets and presents the detailed construction of general average-interpolating
wavelets. The main ingredients of the proposed wavelet upwind scheme on uniform cells and the conservative adaptive
multiresolution wavelet upwind scheme are then described. In Section~\ref{Sec:Numerical_test}, the numerical performance
of the proposed adaptive wavelet upwind schemes is examined through a series of benchmark tests. The main conclusions
are summarized in Section~\ref{Sec:Conclusion}.
\section{Fundamental theory and numerical methods}
\label{Sec:Theory_and_methods}
As discussed above, constructing wavelet bases from cell averages is essential for developing a conservative adaptive
wavelet upwind scheme. In this section, we start with the multiresolution analysis of average interpolating wavelets,
and consider the following one-dimensional scalar conservation law as the basic model:
\begin{equation}
u_t + f(u)_x = 0.
\label{eq:model}
\end{equation}
We then elaborate on the fundamental ingredients of the proposed conservative adaptive wavelet upwind scheme.
\subsection{Multiresolution analysis of average-interpolating wavelets} 
Average-interpolating wavelets were constructed in the 1990s, driven by the needs of diverse
applications~\cite{Harten1996SIAM, Donoho1994}. In computational fluid dynamics community, Harten's pioneering work on
the average-interpolating wavelet originated with the development of adaptive multiresolution numerical methods for
hyperbolic conservation laws~\cite{Harten1995CPAM}. The average-interpolating wavelets proposed by Harten mainly
concentrate on the multiresolution representation of functions based on cell averages. Since the proposed adaptive
wavelet upwind scheme exploits the wavelet multiresolution approximation of functions to construct the upwind
discretization scheme, we present the fundamental theory of the average-interpolating wavelets from the perspective of
function approximation. 
\subsubsection{Biorthogonal multiresolution analysis}
Before introduce the biorthogonal multiresolution analysis, we first give some primary definitions. A Hilbert space
denoted by $L^2({\Omega})$ is the collection of square integrable functions over $\Omega$ that satisfy $\int_{\Omega}
|f(x)|^2 \mathrm{d}x < \infty$, where ${\Omega}$ is any open subset of $\mathbb{R}$ ~\cite{Ciarlet2013}. The resulting
$L^2({\Omega})$-norm of the function is defined by
\begin{equation}
\|f\|_{L^2(\Omega)} = \left( \int_{\Omega} |f(x)|^2\, \mathrm{d}x \right)^{1/2}.
\label{eq:l2norm}
\end{equation}
The space $L^2({\Omega})$ is naturally equipped with the following inner product:
\begin{equation}
(f,g)= \int_{\Omega} f(x)g(x)\,\mathrm{d}x, \mathrm{whenever} f,g \in L^2({\Omega}).
\label{eq:innerp}
\end{equation}
In addition, the $L^{\infty}({\Omega})$-norm is defined by 
 \begin{equation}
\|f\|_{L^{\infty}(\Omega)} = \sup_{x \in \Omega}{\{|f(x)|\}}.
\label{eq:lfnorm}
\end{equation}

A multiresolution analysis defined in $L^2(\mathbb{R})$ space consists of a sequence of closed subspaces $V_J$,
satisfying the following relations:
\begin{equation}
\cdots V_{-1} \subset V_{0} \subset V_1 \cdots \subset V_J \subset V_{J+1} \cdots,
\label{eq:nestw}
\end{equation}

\begin{equation}
\overline{\bigcup_{J \in \mathbb{Z}} V_J} = L^2(\mathbb{R}),
\end{equation}

\begin{equation}
\bigcap_{J \in \mathbb{Z}} V_J = \{0\},
\end{equation}

\begin{equation}
f \in V_J \iff f(2^{-J}\cdot) \in V_0,
\end{equation}

\begin{equation}
f \in V_0 \iff f(\cdot - k) \in V_0.
\end{equation}
In addition, there exists an auxiliary function named as a scaling function $\phi$ such that
\begin{equation}
\{\phi(x-k)|k \in \mathbb{Z}\} \text{ is a Riesz basis of } V_0.
\label{eq:basisw}
\end{equation}

The basic principle of the multiresolution analysis is that whenever a collection of closed spaces satisfies Eqs.
~\eqref{eq:nestw}-\eqref{eq:basisw}, then there exists a wavelet basis $\{{\psi}_{J,k}(x)| J, k \in \mathbb{Z}\}$,
$\psi_{J,k}=\psi(2^Jx-k)$ of the complement space $W_J$ of $V_J$ in $V_{J+1}$. Due to $V_J \subset V_{J+1}$ and $W_J
\subset V_{J+1}$, the scaling function and wavelet function both satisfy the following refinement relation:
\begin{equation}
\phi(x)=\sum_{k}{h_k\phi(2x-k)},\,\psi(x)=\sum_{k}{g_k\phi(2x-k)},
\label{eq:refinew}
\end{equation}
where $h_k$ and $g_k$ are filter coefficients for the scaling function and wavelet function, respectively.

The biorthogonal multiresolution analysis is composed of a primal multiresolution analysis and a dual multiresolution
analysis. Here, the multiresolution analysis introduced above is regarded as the primal multiresolution analysis. The
dual multiresolution analysis consists of a sequence of closed subspaces $\tilde{V}_J$. The corresponding dual scaling
function and wavelet function are denoted by $\tilde{\phi}(x)$ and $\tilde{\psi}(x)$, which also follow the refinement
relations with filter coefficients $\tilde{h}_k$ and $\tilde{g}_k$, respectively. The biorthogonality implies that the
dual scaling and wavelet functions are orthogonal to their primal counterparts in the sense of inner product, namely
\begin{equation}
\begin{aligned}
\left( \phi(x), \tilde{\psi}(x-l) \right)
&= \left( \psi(x), \tilde{\phi}(x-l) \right) = 0, \\
\left( \phi(x), \tilde{\phi}(x-l) \right)
&= \left( \psi(x), \tilde{\psi}(x-l) \right) = \delta_{0,l},
\end{aligned}
\label{eq:bior}
\end{equation}
where $\delta_{i,j}$ is the Kronecker function, $\delta_{i,j}=1$, if $i=j$; otherwise, $\delta_{i,j}=0$.  

Then, a projection operator $\mathcal{P}_J f: L^2 \to V_J$ can be found in the biorthogonal setting as 
\begin{equation}
\mathcal{P}_Jf(x)=\sum_{k}{c_{J,k}\phi_{J,k}},
\label{eq:pj}
\end{equation}
where $c_{J,k}=(f,\tilde{\phi}_{J,k}), \, \tilde{\phi}_{J,k}=2^J\tilde{\phi}(2^Jx-k), \, \phi_{J,k}=\phi(2^Jx-k)$. Since
$V_{J+1}=V_J \bigoplus W_J$, the projection operator $\mathcal{P}_{J+1}$ can be decomposed accordingly as
\begin{equation}
\mathcal{P}_{J+1}f(x)=\sum_{k}{c_{J,k}\phi_{J,k}}+\sum_{m}{d_{J,m}\psi_{J,m}},
\label{eq:pj1}
\end{equation}
where $d_{J,m}=(f,\tilde{\psi}_{J,m}), \, \tilde{\psi}_{J,m}=2^J\tilde{\psi}(2^Jx-m)$. The multiresolution analysis is
said to be of $N$th order if the scaling function can exactly reproduce polynomials of degree up to $N-1$:
\begin{equation}
\mathcal{P}_{J}x^{p}=x^p,\quad 0 \le {p} < N.
\label{eq:prep}
\end{equation}

\subsubsection{Construction of general average-interpolating wavelets}
\label{subsec:Constructwavelet}
We have summarized the properties of the scaling function for numerical computation and analysis in our previous study
\cite{Yang2023CF}, including compact support, interpolation, polynomial reproduction, smoothness, refinability and shape
adjustability. For solving elliptic PDEs, symmetric wavelets are advantageous, as their linear phase property enhances
data compression rates~\cite{Daubechies1992}, while the symmetry of the discretization operators ensures improved
numerical stability. In contrast, for compressible flows governed by hyperbolic conservation laws, the upwind property
of the scheme necessitates the use of asymmetric wavelet bases~\cite{Yang2023CF}. From the perspective of adaption
process, symmetric wavelets with the linear phase property yield higher data compression rates~\cite{Yang2024EML}.
Furthermore, to guarantee conservation property, wavelet functions must possess the cell-average interpolation property
in both the discretization and adaptation processes. These requirements suggest that a conservative adaptive wavelet
upwind scheme is preferably constructed based on both symmetric and asymmetric average-interpolating wavelets. However,
existing studies have only reported the construction of symmetric average-interpolating wavelets~\cite{Harten1995CPAM,
Sweldens2005}, whereas the construction of asymmetric counterparts, although conceptually similar, has not been
systematically formulated in terms of explicit expressions and detailed construction procedures. 

Next, we present a general construction framework for average-interpolating wavelets using an integral-average
interpolation technique together with the refinement relation in Eq.~\eqref{eq:refinew}. First, we describe the detailed
procedure for constructing the scaling functions as follows:   
\begin{enumerate}[(1)]
\item Choose the type of the required wavelet and determine its multiresolution analysis order $N$: (a) for symmetric
wavelets, $N \in \text{odd}$ is the unique choice; (b) for asymmetric wavelets, $N \in \text{even}$ or $N \in
\text{odd}$ is optional. 

\item Select a cell stencil. Specify $N$ cells in $V_0$ space as the base and choose two interpolation cells in the
$V_1$ space. We define the difference between the number of cells in the $V_0$ space on the left side of the
interpolation cells in $V_1$ and that on the right side as a stencil bias parameter $b$. Obviously, $b=0$ for symmetric
wavelets, and $0 < | b | < N$ for asymmetric wavelets. Here, we show stencils for $N=5$ symmetric wavelet and $N=4,b=1$
asymmetric wavelet as examples in Fig.~\ref{fig:waveletstencil}. 
\begin{figure}
    \centering

    \begin{subfigure}[b]{0.45\textwidth}
        \centering
        \includegraphics[width=\linewidth]{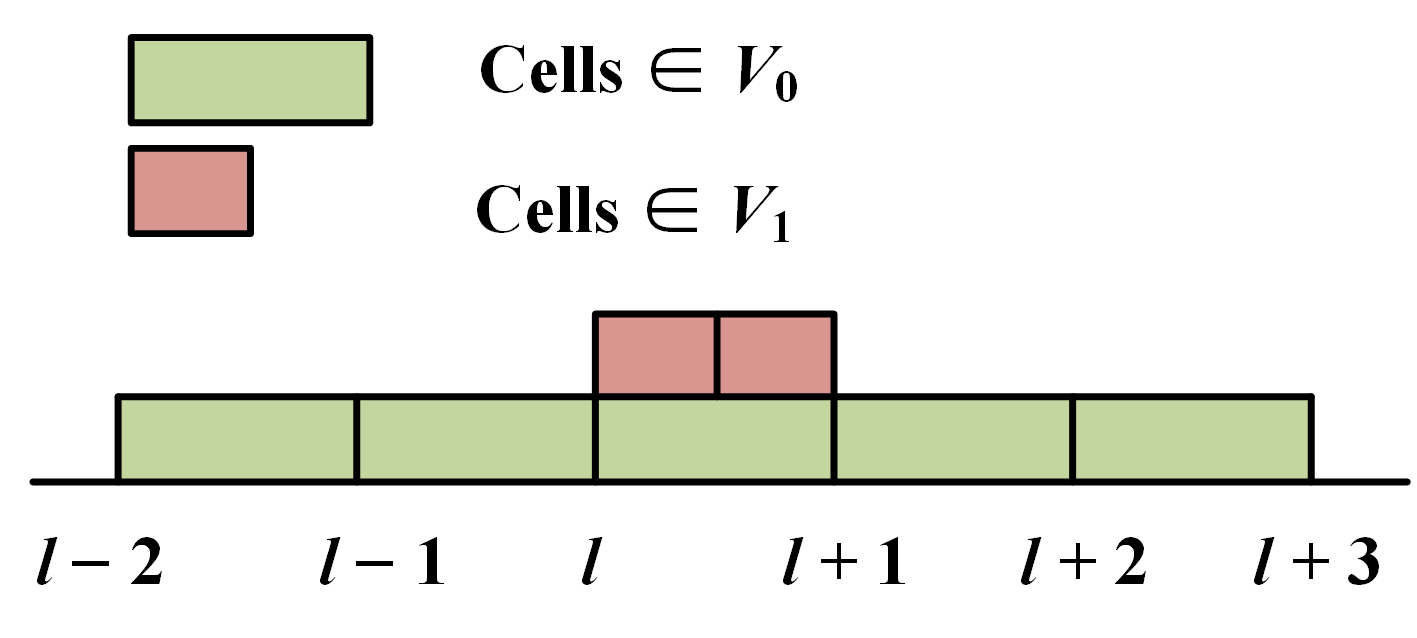}
        \caption{$N=5$}
        \label{fig:N5b0stencil}
    \end{subfigure}
    \hfill
    \begin{subfigure}[b]{0.40\textwidth}
        \centering
        \includegraphics[width=\linewidth]{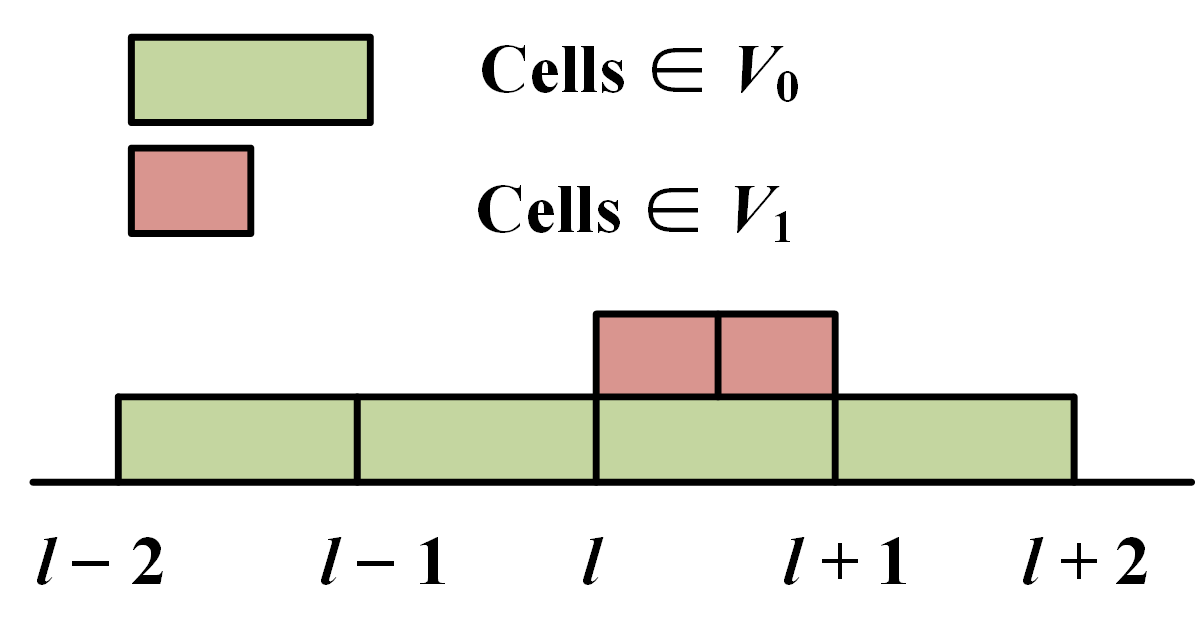}
        \caption{$N=4$, $b=1$}
        \label{fig:N4b1stencil}
    \end{subfigure}

    \caption{Stencils for symmetric and asymmetric wavelets.}
    \label{fig:waveletstencil}
\end{figure}

\item Calculate filter coefficients of the scaling function. The scaling function of the average-interpolating wavelet
satisfies the following integral average relation over the cell $[k,k+1]$ and refinement relation respectively:
\begin{equation}
\bar{\phi}_{k}=\int_{k}^{k+1}\phi(x)\, \mathrm{d}x=\delta_{0,k}.
\label{eq:inta}
\end{equation}

\begin{equation}
\phi(x)=\sum_{l = 1 - N + b}^{N + b}{h_l\phi(2x-l)}.
\label{eq:refinescaling}
\end{equation}
Substituting Eq.~\eqref{eq:refinescaling} into Eq.~\eqref{eq:inta}, the following equations can be derived:
\begin{equation}
h_0 + h_1 = 2.
\label{eq:h01}
\end{equation}

\begin{equation}
h_{2 l}+h_{2 l + 1}=0,\quad \frac{1 - N + b}{2}  \le  l \le \frac{N - 1+ b}{2} , \, l \ne 0.
\label{eq:h2l}
\end{equation}
Eqs.~\eqref{eq:h01} and~\eqref{eq:h2l} provide a system of $N$ linear equations. To uniquely determine the filter
coefficients, an additional set of $N$ linear equations is required. These additional constraints are derived from
average-interpolating polynomials constructed based on the scaling function $\phi(x)$. Let $p_l(x)$ be a polynomial of
degree $N-1$ defined over cells in $[x_L, x_R]$, which preserves the same cell averages as the scaling function
$\phi(x)$ over the intervals $[k, k+1]$. Then, we have
\begin{equation}
\int_{k}^{k+1} p_{l}(x)\, \mathrm{d}x
= \int_{k}^{k+1} \phi(x)\, \mathrm{d}x
= \bar{\phi}_k,
\quad
k \in \left[x_L,\;x_R\right],
\label{eq:pl1}
\end{equation}
where ~$x_L = l + \dfrac{1 - N - b}{2}$, ~$x_R = l + \dfrac{N + 1 - b}{2}$. Then, the polynomial $p_l(x)$ can be
reconstructed by enforcing the integral average condition $\bar{\phi}_k = \delta_{0,k}$. Accordingly, the scaling
coefficient $h_{2l}$ can be readily obtained by integrating $p_l(x)$ over the half cell $[l, l + \frac{1}{2}]$.
\begin{equation}
h_{2l} = 2 \int_{l}^{l + \frac{1}{2}} p_{l}(x)\, \mathrm{d}x.
\label{eq:h2l1}
\end{equation}
At this point, the determination of the filter coefficients has been reduced to evaluating the integral of the
reconstruction polynomials of degree $N-1$ over the half cell. The reconstruction methods developed in the FVM can be
naturally adopted here. A straightforward approach is to enforce the integral average condition as shown in
Eq.~\eqref{eq:pl1} to determine the coefficients of the polynomial $p_l(x)$ and subsequently compute $h_{2l}$ via
integration over the half cell. However, this approach requires solving a system of linear equations and is therefore
computationally implicit. Here, we propose an explicit formulation that directly evaluates $h_{2l}$ using the primitive
function of the polynomial $p_l(x)$. Let $P_l(x)$ be the primitive function of $p_l(x)$, we have
\begin{equation}
\int_{x_L}^{x} P_{l}^{\prime}(x)\, \mathrm{d}x=\int_{x_L}^{x} p_{l}(x)\, \mathrm{d}x=P_l(x)-P_l(x_L).
\label{eq:Prp}
\end{equation}
Assume that $P_{l}(x_L)=0$. Then, the values of \(P_l(x)\) at integer points are given by
\begin{equation}
P(k) = \sum_{i = x_L}^{k-1} \bar{\phi}_i =
\begin{cases}
0, & k < 1, \\
1, & k \ge 1.
\end{cases}
\label{eq:Pint}
\end{equation}
After constructing the primitive function $P_l(x)$ via a Lagrange interpolating polynomial determined by its values at
the integer points $k=x_L,x_L+1,\ldots,x_R$, the explicit formula for the filter coefficients $h_{2l}$ can be derived as
follows:
\begin{equation}
h_{2l} =
\left\{
\begin{aligned}
&2 \sum_{m=x_L}^{x_R} P(m)\,
\prod_{\substack{i=x_L \\ i\ne m}}^{x_R}
\frac{l + \frac{1}{2} - i}{m - i}, \qquad &l \le 0\\[6pt]
&2 \left[\left(
\sum_{m=x_L}^{x_R} P(m)\,
\prod_{\substack{i=x_L \\ i\ne m}}^{x_R}
\frac{l + \frac{1}{2} - i}{m - i}\right)
- 1
\right]. \qquad &l>0
\end{aligned}
\right.
\label{eq:h2lex}
\end{equation}
By solving Eqs.~\eqref{eq:h01},~\eqref{eq:h2l}, and ~\eqref{eq:h2lex}, we can uniquely determine the filter coefficients
of the scaling function.

\item Compute the values and $n$th order derivatives of the scaling function. First, these values at integer points are
obtained from the refinement relation:
\begin{equation}
\phi^{(n)}(x)=
2^n\sum_{k=NL}^{NR}h_k\phi^{(n)}(2x-k),\quad
N_L= 1 - N + b,\,
N_R= N + b,
\label{eq:fint}
\end{equation}
where $N_L$ and $N_R$ denote the left and right endpoints of the compact support of the scaling function. Since linear
equations derived by Eq.~\eqref{eq:fint} are singular, an additional constraint must be imposed by the polynomial
reproduction property as follows~\cite{Sweldens2005}:
\begin{equation}
\sum_{k} \frac{1}{n+1} \left( (k+1)^{n+1} - k^{n+1} \right) \phi^{(n)}(x - k)
= n!, \quad 0 \le n < N.
\label{eq:fints}
\end{equation}
By replacing the last linear equation in the linear system obtained from Eq.~\eqref{eq:fint} by Eq.~\eqref{eq:fints}
evaluated at $x=0$, the values at integer points of the scaling function can be uniquely calculated. These values at
arbitrary dyadic points can then be computed using Eq.~\eqref{eq:fint} via a cascade algorithm
\cite{Daubechies1992,Liu2019}. 
\end{enumerate}

Due to the average-interpolating property, scaling coefficients $c_{J,k}$ is the integral average of the function $f$
over the cell $[\frac{k}{2^J},\frac{k+1}{2^J}]$ denoted by $\bar{f}_{J,k}$. It is natural to choose the characteristic
function as the dual scaling function~\cite{Sweldens2005}:
\begin{equation}
\tilde{\phi}(x)=\chi_{[0,1]}(x)=
\begin{cases}
1, & x \in [0, 1], \\
0, & x \notin [0,1].
\end{cases}
\label{eq:dualscaling}
\end{equation}
In the dyadic framework, the scaling coefficients in $V_J$ space can be determined by the arithmetic average of the
scaling coefficients in $V_{J+1}$ space:
\begin{equation}
\bar{f}_{J,k}=\frac{1}{2}\left(\bar{f}_{J + 1,2k}+\bar{f}_{J + 1,2k + 1}\right).
\label{eq:scalingcoeff}
\end{equation}
To facilitate the computation of wavelet coefficients through two successive projections, a natural choice of the
wavelet function in the average-interpolating framework is the difference of two scaling functions~\cite{Sweldens2005}:
\begin{equation}
\psi(x)=\phi(2x)-\phi(2x-1).
\label{eq:waveletf}
\end{equation}

As shown in Eq.~\eqref{eq:pj1}, the wavelet coefficients $d_{J,m}$ can be obtained by the inner product between the
function and the corresponding dual wavelet function. However, the direct construction of the dual wavelet function is
generally nontrivial. Alternatively, the coefficients $d_{J,m}$ can be derived by substituting the refinement relation
\eqref{eq:refinew} into projection operator $\mathcal{P}_J$ and calculating the difference
$\mathcal{P}_{J+1}-\mathcal{P}_J$. The wavelet coefficients can then be given by:
\begin{equation}
d_{J,m}=\bar{f}_{J+1,2m}-\sum_{n}{h_{2n}{\bar{f}}_{J,m-n}}.
\label{eq:djm}
\end{equation}

Thus far, all the essential procedures for constructing general average-interpolating wavelets have been presented. By
following the above framework, both symmetric and asymmetric wavelets with desirable properties can be constructed. In
what follows, the average-interpolating wavelet approximation of functions will be discussed.
\subsubsection{Average-interpolating wavelet approximation of functions}
\label{subsec:waveletapproximation}
Based on the scaling and wavelet functions, as well as the corresponding expansion coefficients introduced in
subsection~\ref{subsec:Constructwavelet}, both single-resolution and multiresolution average-interpolating wavelet
approximations of a function in $L^2({\mathbb{R}})$ can be formulated. However, the physical problems are generally
defined on bounded domains. Functions limited in a finite domain $\Omega\subset\mathbb R$ can be approximated by the
wavelet approximation at a single resolution level as follows:
\begin{equation}
\mathcal{P}_J f(x)=\sum_{k \in \mathcal{K}_J}{\bar{f}_{J,k}\phi_{J,k}(x)},\quad 
\mathcal{K}_J=\left\{k\in\mathbb Z:\operatorname{supp}\phi_{J,k}\cap\Omega\ne\emptyset
\right\}.
\label{eq:sapprox}
\end{equation}
The corresponding approximation error estimate for sufficiently smooth functions is given in
Proposition~\ref{prop:LinfL2err}.
\begin{proposition}
\label{prop:LinfL2err}
Assume that $\mathcal P_J$ is generated by an average-interpolating wavelet multiresolution analysis of order $N$ on
$\mathbb R$. In particular, $\mathcal P_J$ exactly reproduces polynomials of degree at most $N-1$ from their cell
averages. Let $\Omega\subset\mathbb R$ be a bounded computational interval, and let $\widetilde\Omega$ be an extended
interval containing all supports $\operatorname{supp}\phi_{J,k}$ involved in the evaluation of $\mathcal P_J f$ on
$\Omega$.

Suppose that $f\in C^r(\widetilde\Omega)$ with integer $r\ge 1$, and that the exterior values required near the boundary
of $\Omega$ are supplied by the same function $f$ on $\widetilde\Omega$. Define $q=\min\{N,r\}$. Then
\begin{equation}
\|f-\mathcal P_J f\|_{L^\infty(\Omega)}
\le
C_\infty h_J^q |f|_{C^q(\widetilde\Omega)},
\label{eq:Linf-error}
\end{equation}
and consequently,
\begin{equation}
\|f-\mathcal P_J f\|_{L^2(\Omega)}
\le
C_2 h_J^q |f|_{C^q(\widetilde\Omega)},
\label{eq:L2-error}
\end{equation}
where $C_\infty$ is independent of $J$ and $h_J=\frac{1}{2^J}$,\, $C_2=|\Omega|^{1/2}C_\infty$.
\end{proposition}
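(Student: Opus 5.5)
The argument is the standard local-polynomial comparison (Bramble--Hilbert type): exploit compact support and polynomial reproduction, then pass from $L^\infty$ to $L^2$ using the boundedness of $\Omega$.

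Fix $x\in\Omega$. By compact support of $\phi$ on $[N_L,N_R]$, only the indices $k$ with $2^Jx-k\in[N_L,N_R]$ contribute to $\mathcal P_J f(x)$. Their number is at most $N_R-N_L+1=2N$, independently of $J$ and $x$. The cells $[k h_J,(k+1)h_J]$ entering these averages all lie in an interval $I_x\subset\widetilde\Omega$ of length at most $c\,h_J$, with $c$ depending only on $N$ and $b$. This is where the hypothesis that exterior values come from $f$ on $\widetilde\Omega$ is used: every cell average is a genuine average of $f$ over a subinterval of $\widetilde\Omega$.

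Let $p$ be the Taylor polynomial of $f$ of degree $q-1$ about $x$. Since $q\le N$, the reproduction property \eqref{eq:prep} (exactness of $\mathcal P_J$ on polynomials of degree $\le N-1$) gives $\mathcal P_J p(x)=p(x)=f(x)$. Hence
\begin{equation*}
f(x)-\mathcal P_Jf(x)=\mathcal P_J(p-f)(x)=\sum_{k}\overline{(p-f)}_{J,k}\,\phi(2^Jx-k).
\end{equation*}
Each cell average is bounded by $\sup_{I_x}|f-p|$. Taylor's theorem with $f\in C^q$ gives $\sup_{I_x}|f-p|\le \frac{(c h_J)^q}{q!}|f|_{C^q(\widetilde\Omega)}$. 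Combining this with the bound on the number of terms,
\begin{equation*}
|f(x)-\mathcal P_Jf(x)|\le 2N\|\phi\|_{L^\infty}\frac{c^q}{q!}\,h_J^q|f|_{C^q(\widetilde\Omega)}.
\end{equation*}
The constant $C_\infty=2N\|\phi\|_{L^\infty}c^q/q!$ is independent of $J$ and $x$. Taking the supremum over $x\in\Omega$ yields \eqref{eq:Linf-error}.

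For \eqref{eq:L2-error}, use $\|g\|_{L^2(\Omega)}\le|\Omega|^{1/2}\|g\|_{L^\infty(\Omega)}$, which directly gives $C_2=|\Omega|^{1/2}C_\infty$.

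The main obstacle is justifying $\|\phi\|_{L^\infty}<\infty$, i.e. that the scaling function from the refinement relation \eqref{eq:refinescaling} is a bounded (indeed continuous) function. The paper computes $\phi$ only via the cascade algorithm, so I would take this as part of the hypothesis that $\mathcal P_J$ is generated by an $N$th-order multiresolution analysis with compactly supported bounded $\phi$, which is standard for these average-interpolating families. A lesser point is the case $r\ge N$, where $q=N$ and reproduction of degree $N-1$ is exactly enough. For $r<N$ the Taylor polynomial has lower degree and is still reproduced, so no case split is needed beyond defining $q=\min\{N,r\}$.
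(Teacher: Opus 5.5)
Your proposal is correct and follows essentially the same route as the paper's proof: polynomial reproduction at $x$, compact support giving local $L^\infty$-boundedness of $\mathcal P_J$ on an interval of length $O(h_J)$, Taylor's theorem on that interval, and then $\|g\|_{L^2(\Omega)}\le|\Omega|^{1/2}\|g\|_{L^\infty(\Omega)}$. The differences are minor: you expand about $x$ itself, so the paper's extra term $|f(x)-p_{q-1}(x)|$ and the factor $1+C_P$ disappear, and you make the paper's constant $C_P$ explicit as $2N\|\phi\|_{L^\infty}$, which usefully exposes the boundedness of $\phi$ that the paper's appeal to ``compact support and finite overlap'' uses only implicitly.
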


\begin{proof}
Let $q=\min\{N,r\}$. For any $x\in\Omega$, define
\[
\mathcal K_J(x)
=
\left\{
k\in\mathcal K_J:
x\in\operatorname{supp}\phi_{J,k}
\right\}.
\]
Then only the scaling functions with indices in $\mathcal K_J(x)$ contribute to $\mathcal P_J f(x)$. Let $\widetilde
I_x\subset\widetilde\Omega$ be a local interval containing the supports of all $\phi_{J,k}$ with $k\in\mathcal K_J(x)$.
Since the scaling functions are compactly supported and have finite overlap, there exists a constant $C_s>0$, depending
only on the support size of the scaling functions, such that
\[
\operatorname{diam}(\widetilde I_x)\le C_s h_J,
\]
where $C_s$ is independent of $J$ and $h_J=\frac{1}{2^J}$.

Choose a point $x_0\in\widetilde I_x$ and let $p_{q-1}$ be the Taylor polynomial of $f$ of degree $q-1$ about $x_0$.
Since the wavelet has multiresolution order $N$ on $\mathbb R$, $\mathcal P_J$ exactly reproduces polynomials of degree
at most $N-1$ from their cell averages. Since $q\le N$, we have
\[
\mathcal P_J p_{q-1}=p_{q-1}.
\]
Therefore,
\[
f-\mathcal P_J f
=
(f-p_{q-1})-\mathcal P_J(f-p_{q-1}).
\]
Evaluating at $x$ and using the triangle inequality gives
\[
|f(x)-\mathcal P_J f(x)|
\le
|f(x)-p_{q-1}(x)|
+
|\mathcal P_J(f-p_{q-1})(x)|.
\]

The compact support and finite overlap of the scaling functions imply the local boundedness of $\mathcal P_J$ in the
maximum norm. Thus, there exists a constant $C_P>0$, independent of $J$ and $h_J$, such that
\[
|\mathcal P_J g(x)|
\le
C_P\|g\|_{L^\infty(\widetilde I_x)} .
\]
Hence,
\[
|f(x)-\mathcal P_J f(x)|
\le
(1+C_P)\|f-p_{q-1}\|_{L^\infty(\widetilde I_x)} .
\]

Since $\operatorname{diam}(\widetilde I_x)\le C_s h_J$, Taylor's theorem gives
\[
\|f-p_{q-1}\|_{L^\infty(\widetilde I_x)}
\le
C_T h_J^q |f|_{C^q(\widetilde I_x)},
\]
where $C_T$ depends only on $q$ and the relative support size of the scaling functions, but is independent of $J$ and
$h_J$. Therefore,
\[
|f(x)-\mathcal P_J f(x)|
\le
(1+C_P)C_T h_J^q |f|_{C^q(\widetilde I_x)}.
\]
Since $\widetilde I_x\subset\widetilde\Omega$, it follows that
\[
|f(x)-\mathcal P_J f(x)|
\le
C_\infty h_J^q |f|_{C^q(\widetilde\Omega)},
\qquad
C_\infty=(1+C_P)C_T.
\]
Taking the supremum over $x\in\Omega$ yields
\[
\|f-\mathcal P_J f\|_{L^\infty(\Omega)}
\le
C_\infty h_J^q |f|_{C^q(\widetilde\Omega)}.
\]

Since $\Omega$ is bounded, we further have
\[
\|f-\mathcal P_J f\|_{L^2(\Omega)}
\le
|\Omega|^{1/2}\|f-\mathcal P_J f\|_{L^\infty(\Omega)}.
\]
Consequently,
\[
\|f-\mathcal P_J f\|_{L^2(\Omega)}
\le
C_2 h_J^q |f|_{C^q(\widetilde\Omega)},
\qquad
C_2=|\Omega|^{1/2}C_\infty .
\]
\end{proof}

The average-interpolating wavelet multiresolution approximation of the function defined on $\Omega$ can be written as 
\begin{equation}
\begin{aligned}
&\mathcal{P}_{J_0}^{J_{\text{max}}}f(x)
=\sum_{k \in \mathcal{K}_{J_0}}{\bar{f}_{J_0,k}\phi_{J_0,k}}(x)+
\sum_{J=J_0}^{J_{\text{max}}-1}\sum_{m \in \Lambda_{J}}{d_{J,m}\psi_{J,m}}(x), \\
&\mathcal{K}_{J_0}=\left\{k\in\mathbb Z:\operatorname{supp}\phi_{J_0,k}\cap\Omega\ne\emptyset
\right\},\,
\Lambda_J=\left\{m\in\mathbb Z:\operatorname{supp}\psi_{J,m}\cap\Omega\ne\emptyset
\right\}.
\end{aligned}
\label{eq:mapprox}
\end{equation}
where $J_0$ is the coarsest resolution level, and $J_\text{max}$ is the finest resolution level. The wavelet
coefficients serve as local smoothness indicators, and their magnitude reflects the degree of local regularity, as
quantified by Proposition~\ref{prop:djkvalue}.
\begin{proposition}
Let $d_{J,k}$ be the wavelet coefficient defined by the average-interpolating multiresolution decomposition. Suppose
that $f\in C^r(\widetilde\Omega_{J,k})$ with integer $r\ge 1$, where $\widetilde\Omega_{J,k}$ is a local extended
interval containing the support region associated with the wavelet coefficient $d_{J,k}$. Define $q=\min\{N,r\}$. Then
\begin{equation}
|d_{J,k}|
\le
C_d h_J^q |f|_{C^q(\widetilde\Omega_{J,k})},
\label{eq:detail-decay}
\end{equation}
where $C_d$ is independent of $J$ and $h_J=\frac{1}{2^J}$.
\label{prop:djkvalue}
\end{proposition}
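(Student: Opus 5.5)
Following the strategy of Proposition~\ref{prop:LinfL2err}, I would start from the explicit formula \eqref{eq:djm},
\[
d_{J,k}=\bar f_{J+1,2k}-\sum_{n}h_{2n}\bar f_{J,k-n},
\]
which shows that $d_{J,k}$ is a finite linear functional $\mathcal D_{J,k}$ of the cell averages of $f$ on the stencil cells. These are the fine cell $[\frac{2k}{2^{J+1}},\frac{2k+1}{2^{J+1}}]$ and the coarse cells $[\frac{k-n}{2^J},\frac{k-n+1}{2^J}]$ with $h_{2n}\neq0$, namely $N$ coarse cells by the stencil selection in Section~\ref{subsec:Constructwavelet}. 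I take $\widetilde\Omega_{J,k}$ to be the union of these cells, so that $\operatorname{diam}(\widetilde\Omega_{J,k})\le C_s h_J$, where $C_s$ depends only on $N$ and $b$.

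The key step is that $\mathcal D_{J,k}$ annihilates polynomials of degree at most $N-1$. By construction \eqref{eq:pl1}--\eqref{eq:h2l1}, the term $\sum_n h_{2n}\bar f_{J,k-n}$ is exactly the average over the left half cell of the degree-$(N-1)$ polynomial that average-interpolates the $N$ coarse-cell averages. Equivalently, it is $\frac{1}{h_{J+1}}\int_{\text{half cell}}\mathcal P_J f\,\mathrm{d}x$, which follows from the refinement relation \eqref{eq:refinescaling} and \eqref{eq:inta}. If $f=p$ with $\deg p\le N-1$, the interpolating polynomial is $p$ itself, since $\mathcal P_J p=p$ by \eqref{eq:prep}. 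The predicted half-cell average then equals $\bar p_{J+1,2k}$, and therefore $\mathcal D_{J,k}p=0$. I would verify this identification carefully from \eqref{eq:h2lex}; I expect it to be the main point requiring care, although it essentially restates the construction.

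Next I would let $q=\min\{N,r\}$ and let $p_{q-1}$ be the Taylor polynomial of $f$ of degree $q-1$ about a point $x_0\in\widetilde\Omega_{J,k}$. Since $q\le N$, the annihilation property gives
\[
d_{J,k}=\mathcal D_{J,k}(f-p_{q-1}).
\]
Each cell average is bounded by the sup norm on $\widetilde\Omega_{J,k}$. Hence
\[
|d_{J,k}|\le\Bigl(1+\sum_n|h_{2n}|\Bigr)\|f-p_{q-1}\|_{L^\infty(\widetilde\Omega_{J,k})}.
\]
The filter coefficients are fixed numbers independent of $J$, by \eqref{eq:h01}--\eqref{eq:h2lex}.

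Finally, Taylor's theorem on an interval of length at most $C_s h_J$ gives
\[
\|f-p_{q-1}\|_{L^\infty(\widetilde\Omega_{J,k})}\le \frac{(C_sh_J)^q}{q!}|f|_{C^q(\widetilde\Omega_{J,k})}.
\]
Combining the last two bounds yields \eqref{eq:detail-decay} with $C_d=\bigl(1+\sum_n|h_{2n}|\bigr)C_s^q/q!$, which is independent of $J$. The remaining steps are routine once the polynomial-exactness of the prediction operator is established.
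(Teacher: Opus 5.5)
Your proof is correct, but it follows a different route from the paper's.

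The paper never works with \eqref{eq:djm} directly. It sets $\mathcal Q_Jf=\mathcal P_{J+1}f-\mathcal P_Jf=(\mathcal P_{J+1}f-f)+(f-\mathcal P_Jf)$ and applies Proposition~\ref{prop:LinfL2err} at levels $J$ and $J+1$ to get $\|\mathcal Q_Jf\|_{L^\infty(\widetilde\Omega_{J,k})}\le C_Qh_J^q|f|_{C^q(\widetilde\Omega_{J,k})}$. It then uses the fact that $d_{J,k}$ is a fixed finite combination of local cell averages of $\mathcal Q_Jf$. You instead show that the discrete detail functional itself annihilates polynomials of degree at most $N-1$, and then subtract a Taylor polynomial. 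This is a Bramble--Hilbert-type argument on the functional, not a reduction to the projection error of $\mathcal P_J$.

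Your ``equivalently'' identity connects the two routes. Because $\mathcal P_{J+1}f$ reproduces fine cell averages, it gives $d_{J,k}=\overline{(\mathcal Q_Jf)}_{J+1,2k}=\overline{(f-\mathcal P_Jf)}_{J+1,2k}$. This makes the paper's unspecified representation concrete (a single term with coefficient $1$) and shows that the level-$(J+1)$ estimate in the paper is unnecessary.

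What your route buys:
\begin{itemize}
\item Working from the polynomial-interpolant characterization of $h_{2n}$, the argument is purely discrete. Annihilation follows from uniqueness of the degree-$(N-1)$ polynomial with $N$ prescribed cell averages. Cite that uniqueness rather than $\mathcal P_Jp=p$, which belongs to the other characterization.
\item You need neither \eqref{eq:prep} nor the existence, boundedness and local stability constant $C_P$ of the continuous $\phi$, which Proposition~\ref{prop:LinfL2err} takes for granted.
\item You need $\widetilde\Omega_{J,k}$ only to contain the $N$ coarse stencil cells; for a larger given $\widetilde\Omega_{J,k}$, use monotonicity of the seminorm. Applying Proposition~\ref{prop:LinfL2err} on $\widetilde\Omega_{J,k}$ implicitly needs the region to contain the supports of all contributing $\phi_{J,m}$ and $\phi_{J+1,m}$, a wider neighborhood.
\item You get the explicit constant $C_d=(1+\sum_n|h_{2n}|)N^q/q!$.
\end{itemize}

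In exchange, the paper's route is shorter once Proposition~\ref{prop:LinfL2err} is available. It also bounds the function-space detail $\mathcal Q_Jf$ itself, in line with the paper's function-space viewpoint.
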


\begin{proof}
Let $q=\min\{N,r\}$. For two consecutive approximation spaces $V_J$ and $V_{J+1}$, the detail component between these
two levels is defined by
\[
\mathcal Q_J f
=
\mathcal P_{J+1}f-\mathcal P_J f .
\]
Thus,
\[
\mathcal Q_J f
=
(\mathcal P_{J+1}f-f)+(f-\mathcal P_J f).
\]

Let $\widetilde\Omega_{J,k}$ be a local extended interval associated with the wavelet coefficient $d_{J,k}$. Applying
Proposition~\ref{prop:LinfL2err} to the two levels $J$ and $J+1$ on $\widetilde\Omega_{J,k}$ gives
\[
\|f-\mathcal P_J f\|_{L^\infty(\widetilde\Omega_{J,k})}
\le
C_\infty^{(0)} h_J^q |f|_{C^q(\widetilde\Omega_{J,k})},
\]
and
\[
\|f-\mathcal P_{J+1}f\|_{L^\infty(\widetilde\Omega_{J,k})}
\le
C_\infty^{(1)} h_{J+1}^q |f|_{C^q(\widetilde\Omega_{J,k})},
\]
where $C_\infty^{(0)}$ and $C_\infty^{(1)}$ are independent of $J$ and $h_J=\frac{1}{2^J}$. Since $h_{J+1}=h_J/2$, it
follows that $h_{J+1}^q=2^{-q}h_J^q$. Therefore,
\[
\|\mathcal Q_J f\|_{L^\infty(\widetilde\Omega_{J,k})}
\le
C_Q h_J^q |f|_{C^q(\widetilde\Omega_{J,k})},\quad C_Q=C_\infty^{(0)}+2^{-q}C_\infty^{(1)}.
\]

Since the average-interpolating wavelet coefficient is obtained from a finite linear combination of local cell averages
of the detail component, there exist fixed coefficients $\{a_m\}_{m\in S}$, with a finite index set $S$, such that
\[
d_{J,k}
=
\sum_{m\in S} a_m \overline{\mathcal Q_J f}_{m},
\]
where $\overline{\mathcal Q_J f}_{m}$ denotes the cell average of $\mathcal Q_J f$ over a local cell contained in
$\widetilde\Omega_{J,k}$. Hence,
\[
|d_{J,k}|
\le
\sum_{m\in S}|a_m|\,|\overline{\mathcal Q_J f}_{m}|.
\]
Since
\[
|\overline{\mathcal Q_J f}_{m}|
\le
\|\mathcal Q_J f\|_{L^\infty(\widetilde\Omega_{J,k})},
\]
we obtain
\[
|d_{J,k}|
\le
C_\psi
\|\mathcal Q_J f\|_{L^\infty(\widetilde\Omega_{J,k})},
\qquad
C_\psi=\sum_{m\in S}|a_m|.
\]
Here $C_\psi$ is independent of $J$ and $h_J$ because the number of involved cell averages and the coefficients are
fixed for a given average-interpolating wavelet. Combining the above estimates yields
\[
|d_{J,k}|
\le
C_d h_J^q |f|_{C^q(\widetilde\Omega_{J,k})}, \quad C_d=C_\psi C_Q.
\]
\end{proof}

Finally, the fast wavelet transform (FWT) and its inverse are presented. The FWT is a linear algorithm to calculate the
wavelet coefficients $d_{J,m}$ at resolution levels $J \ge J_0$ and the cell averages $\bar{f}_{J_0,k}$ at the coarsest
level $J_0$ from the given cell averages $\bar{f}_{J,k}$. The inverse FWT reconstructs the cell averages from its
multiresolution representation. The details of the FWT and inverse FWT are described as follows:
\begin{equation}
\label{eq:FWT}
\begin{aligned}
&\textbf{Fast Wavelet Transform}: \\
&\textbf{For}\;J = J_{\max}-1, J_{\max}-2, \dots, J_0 \\
&\quad \text{For each index $k$ at the level}\,J \\
&\qquad \text{Compute scaling coefficients:}
\quad \bar{f}_{J,k}
= \frac{1}{2}
\left(\bar{f}_{J+1,2k}+\bar{f}_{J+1,2k+1}\right). \\
&\qquad \text{Compute wavelet coefficients:}
\quad d_{J,k}
= \bar{f}_{J+1,2k}
-\sum_{n} h_{2n}\bar{f}_{J,k-n}.
\end{aligned}
\end{equation}

\begin{equation}
\label{eq:IFWT}
\begin{aligned}
&\textbf{Inverse Fast Wavelet Transform}: \\
&\textbf{For}\;J = J_0, J_0+1, \dots, J_{\max}-1 \\
&\quad \text{For each index $k$ at the level}\,J \\
&\qquad \text{Reconstruct even cell:}
\quad \bar{f}_{J+1,2k}
= d_{J,k}
+\sum_{n} h_{2n}\bar{f}_{J,k-n}. \\
&\qquad \text{Reconstruct odd cell:}
\quad \bar{f}_{J+1,2k+1}
= 2\bar{f}_{J,k}
-\bar{f}_{J+1,2k}.
\end{aligned}
\end{equation}

\noindent\textbf{Remark 1.} In Proposition~\ref{prop:LinfL2err}, we assume that the exterior values required near the
boundary of $\Omega$ in the average-interpolating wavelet approximation are provided by the same function $f$. For
practical computations, these values should be supplied through appropriate boundary conditions. \\
\noindent\textbf{Remark 2.} It should be emphasized that the wavelet coefficients $d_{J,m}$ computed by
Eq.~\eqref{eq:djm} enable the reconstruction of the function $f$ at arbitrary dyadic points through the wavelet
multiresolution approximation in Eq.~\eqref{eq:mapprox}. In contrast, the average-interpolating wavelets in Harten's
framework mainly focus on the discretized multiresolution representation of $f$ over computational cells
\cite{Harten1995CPAM}. This distinction indicates that the present formulation provides a function-space multiresolution
representation rather than a purely data-based decomposition. \\
\noindent\textbf{Remark 3.} Proposition~\ref{prop:djkvalue} shows that the wavelet coefficients provide a natural
indicator of the local regularity of the function, thereby forming the foundation of adaptive multiresolution
wavelet-based numerical methods. \\
\noindent\textbf{Remark 4.} Both the wavelet multiresolution decomposition and reconstruction are linear operators,
since the wavelet coefficients and reconstructed cell averages are obtained through finite linear combinations of
neighboring cell averages. The FWT and inverse FWT provide the efficient implementation of the wavelet transform.

\subsection{Conservative wavelet upwind schemes on uniform cells}
\subsubsection{Finite volume method}
In the finite volume method, the computational domain $\Omega$ is divided into $N_1$ non-overlapping cells,
$\mathcal{I}_l: x \in [x_{l-1/2}, x_{l+1/2}], \, l = 1, 2 ,\dots, N_1$, with a uniform mesh size $\Delta x = x_{l+1/2}-
x_{l-1/2}$. The cell-average of the variable $u$ in the cell  $\mathcal{I}_l$ is defined as follows:
\begin{equation}
\bar{u}_l(t)=\frac{1}{\Delta x }\int_{x_{l-1/2}}^{x_{l+1/2}} u(x,t)\, \mathrm {d}x.
\label{eq:uaverage}
\end{equation}
By integrating Eq.~\eqref{eq:model} over the cell $\mathcal{I}_l$ and dividing by the mesh size $\Delta x$, the
following semi-discrete ordinary differential equation (ODE) is obtained:
\begin{equation}
\frac{\mathrm {d} \bar{u}_l(t)}{\mathrm {d} t}=-\frac{1}{\Delta x }\left(\hat f_{l+1/2}-\hat f_{l-1/2}\right),
\label{eq:ODEs}
\end{equation}
where $\hat f_{l+1/2}$ is the numerical flux at the cell interface $x_{l+1/2}$, which is generally determined by a
Riemann solver:
\begin{equation}
\hat f_{l+1/2}=f_{l+1/2}^{Riemann}\left(u_{l+1/2}^L, u_{l+1/2}^R\right).
\label{eq:RiemannSolver}
\end{equation}
where $u_{l+1/2}^L$ and $u_{l+1/2}^R$ are the left and right interface values at $x_{l+1/2}$, reconstructed from the
cell averages $\bar{u}_k$ using the corresponding upwind-biased reconstructions. 
\subsubsection{Average-interpolating wavelet upwind scheme}
The construction of the conservative wavelet upwind scheme relies on cell-average-based wavelet basis functions with
upwind properties. Using the general construction method for average-interpolating wavelets presented in
subsection~\ref{subsec:Constructwavelet}, a pair of asymmetric average-interpolating wavelets can be constructed that
are mirror-symmetric with respect to $x=1/2$ and enable upwind-biased wavelet reconstructions of the interface values
$u_{l+1/2}^L$ and $u_{l+1/2}^R$.

The upwind property is controlled by the stencil bias parameter $b$. An average-interpolating wavelet with $b > 0$ is
biased in the positive direction, whereas one with $b < 0$ is biased in the negative direction. Accordingly, the
wavelets with $b > 0$ and $b < 0$ are referred to as the positive- and negative-direction upwind wavelets, respectively.
It should be noted that not all asymmetric average-interpolating wavelets are suitable for constructing stable upwind
schemes. The stability of a wavelet upwind scheme can be assessed by examining the non-negativity of the dissipation
coefficients over the entire wave-number range through the Fourier analysis~\cite{Yang2023,Vichnevetsky1982}. Owing to
the mirror symmetry between the positive- and negative-direction upwind wavelets, it is sufficient to analyze the
positive-direction case with $b>0$. In the present work, the average-interpolating wavelets with $N=4$ and $b=\pm 1$ are
adopted to construct the proposed scheme. The filter coefficients of the corresponding scaling functions are listed in
Table.~\ref{tab:filter_coefficients}, and the associated scaling and wavelet functions are illustrated in
Fig.~\ref{fig:N4basis}. According to Proposition~\ref{prop:LinfL2err}, the selected wavelet bases lead to a fourth-order
average-interpolating wavelet upwind scheme, referred to as AIWU4. The spectral analysis in
subsection~\ref{subsec:Fourieranalysis} demonstrates that the proposed AIWU4 scheme has non-negative dissipation
coefficients over the entire wave-number range, indicating the stability of the selected wavelet discretization
operator. 
\begin{table}
    \centering
    \caption{Filter coefficients of the scaling functions of the positive- and negative-direction upwind wavelets.}
    \label{tab:filter_coefficients}
    \begin{tabular*}{0.90\textwidth}{@{\extracolsep{\fill}}cccc}
        \toprule
        \multicolumn{2}{c}{Positive direction, $N=4$, $b=1$} & \multicolumn{2}{c}{Negative direction, $N=4$, $b=-1$} \\
        \cmidrule(lr){1-2} \cmidrule(lr){3-4} Filter coefficient & Value & Filter coefficient & Value \\
        \midrule
        $h_{-2}$ & $-0.078125$ & $h_{-4}$ & $0.046875$ \\
        $h_{-1}$ & $0.078125$  & $h_{-3}$ & $-0.046875$ \\
        $h_{0}$  & $0.859375$  & $h_{-2}$ & $-0.265625$ \\
        $h_{1}$  & $1.140625$  & $h_{-1}$ & $0.265625$ \\
        $h_{2}$  & $0.265625$  & $h_{0}$  & $1.140625$ \\
        $h_{3}$  & $-0.265625$ & $h_{1}$  & $0.859375$ \\
        $h_{4}$  & $-0.046875$ & $h_{2}$  & $0.078125$ \\
        $h_{5}$  & $0.046875$  & $h_{3}$  & $-0.078125$ \\
        \bottomrule
    \end{tabular*}
\end{table}

\begin{figure}
    \centering

    \begin{subfigure}[b]{0.45\textwidth}
        \centering
        \includegraphics[width=\linewidth]{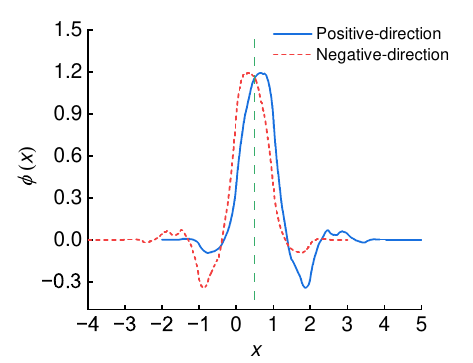}
        \caption{Scaling functions}
        \label{fig:N4scaling}
    \end{subfigure}
    \hfill
    \begin{subfigure}[b]{0.45\textwidth}
        \centering
        \includegraphics[width=\linewidth]{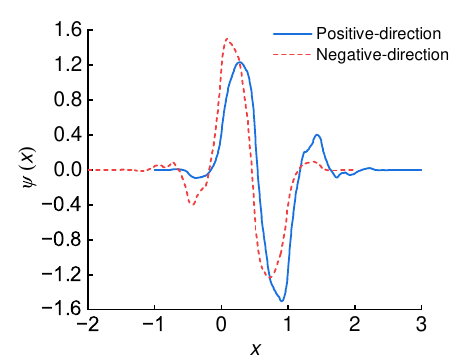}
        \caption{Wavelet functions}
        \label{fig:N4bwavelet}
    \end{subfigure}

    \caption{Scaling and wavelet functions of the $N=4$, $b=\pm 1$ average-interpolating wavelets.}
    \label{fig:N4basis}
\end{figure}
\FloatBarrier

When using wavelet approximation at a single resolution level, the corresponding cells are distributed uniformly. Since
the average-interpolating wavelet is constructed in the dyadic framework, the mesh size only depends on the resolution
level. Hence, we utilize asymmetric average-interpolating wavelet approximation at the resolution level $J$ to construct
the wavelet upwind scheme on uniform cells. First, we restate cell distributions used in the dyadic framework. The
non-overlapping cells are generated as $\mathcal{I}_l=[\dfrac{l}{2^J}, \dfrac{l+1}{2^J}], \, l = l_L, l_L+1 ,\dots,
l_R$, with a uniform mesh size $\Delta x = \frac{1}{2^J}$. $\bar u_{J,l}$ denotes the cell average of the variable $u$
over the cell $\mathcal{I}_l$. Then, the interface values $u_{l+1/2}^L$ and $u_{l+1/2}^R$ can be reconstructed by the
corresponding upwind average-interpolating wavelet approximations as follows:
\begin{equation}
u_{l+1/2}^L=\sum_{k=l + 2 - (N + b)}^{l -(1 - N + b)}\bar{u}_{J,k} \phi_{J,k}^{+}(x_{l+1/2}), \quad b = 1, 
\label{eq:leftwavelet}
\end{equation}
\begin{equation}
u_{l+1/2}^R=\sum_{k=l + 2 - (N + b)}^{l -(1 - N + b)}\bar{u}_{J,k} \phi_{J,k}^{-}(x_{l+1/2}), \quad b = -1,
\label{eq:rightwavelet}
\end{equation}
where $\phi_{J,k}^{+}(x)$ and $\phi_{J,k}^{-}(x)$ are scaling functions of the positive- and negative-direction upwind
wavelets, respectively. Once obtaining these interface values by the wavelet approximation, numerical fluxes can be
computed by the selected Riemann solver. Then, the semi-discrete ODEs as shown in Eq.~\eqref{eq:ODEs} can be resolved by
the  third-order strong-stability preserving Runge-Kutta (SSP-RK3) method \cite{Gottlieb2001}.

Although the upwind average-interpolating wavelet approximation has been introduced above in the finite-volume-type
formulation, the same approximation can also be incorporated into the Shu--Osher's conservative finite-difference form
at a single resolution level $J$~\cite{Shu1989,Merriman2003}. This provides a conservative finite-difference-type
formulation at uniform cells. However, when this formulation is directly extended to an adaptive multiresolution
setting, the conservativity of the resulting adaptive scheme is generally not preserved. This issue will be discussed in
subsection~\ref{subsec:analysisconservation}. Therefore, the proposed finite-volume-type conservative wavelet upwind
formulation based on cell averages is essential for preserving conservation property in the adaptive framework. 
\subsubsection{Boundary Variation Diminishing (BVD) reconstrutciton for discontinuties}
\label{subsec:BVDreconstruction}
To develop high-resolution shock-capturing schemes, Xiao and the co-authors~\cite{Sun2016, Deng2019} proposed a novel
BVD reconstruction method in the FVM framework. The philosophy of the BVD reconstruction is that the minimization of
jumps of the reconstruction values at cell interfaces will effectively reduce the numerical dissipation in the Riemann
solvers. By utilizing the BVD principle with a tangent of hyperbola for interface capturing (THINC) function as one of
the reconstruction candidates, various polynomial-based high-resolution shock-capturing schemes have been proposed both
in FVM and FDM frameworks~\cite{Sun2016,Deng2019,Deng2020, Deng2022, Wakimura2022, Pan2024}. We also have proposed a
fourth-order conservative wavelet upwind scheme with the BVD reconstruction in the FDM framework~\cite{Yang2025AMS}. Due
to the inherent low-pass filtering property of the wavelet bases, the wavelet-based numerical scheme naturally filters
out high-frequency numerical oscillations and improves the ability to capture discontinuities.

In this subsection, we will present the fourth-order average-interpolating wavelet upwind scheme with the BVD
reconstruction for capturing discontinuities. By leveraging the monotonicity and differentiability of the hyperbolic
tangent function, Xiao and the co-authors \cite{Sun2016, Deng2019} proposed the THINC reconstruction to depict
discontinuities in compressible flows without spurious oscillations. The THINC reconstruction $\tilde u(x)^T$ for the
variable $u$ is given by~\cite{Sun2016, Deng2019}
\begin{equation}
\tilde u(x)^T=\bar u_{\min} + \dfrac{\bar u_{\max}}{2}\left[1 + \theta \, \tanh\left(\beta 
\left(\frac{x-x_{l-1/2}}{x_{l+1/2}-x_{l-1/2}}-\tilde x_l\right)\right)\right],
\label{eq:THINCF}
\end{equation}
where $\bar u_{\min}= {\min}\left\{\bar u_{l-1}, \bar u_{l+1}\right\}$, \, $\bar u_{\max}= \max\left\{\bar u_{l-1}, \,
\bar u_{l+1}\right\}-\bar u_{\min}$, $\theta = \textrm{sgn}\left(\bar u_{l+1}-\bar u_{l-1}\right)$. The parameter
$\beta$ controls the jump thickness. A small value of $\beta$ produces a smooth profile, while a large $\beta$ leads to
a sharp transition. The unknown $\tilde x_l$ which denotes the location of the jump center, is determined by the
following cell average constraint condition:
\begin{equation}
\bar u_l=\frac{1}{\Delta x} \int_{x_{l-1/2}}^{x_{l+1/2}} \tilde u(\xi)^T \, \mathrm{d}\xi. 
\label{eq:jumpcenter}
\end{equation}
Since the THINC reconstruction has no solution for local extreme, it degrades to a piecewise constant in such cases. The
explicit formulations of the THINC reconstruction for interface values $u_{l+1/2}^{L, T}$ and $u_{l-1/2}^{R, T}$ are
given by~\cite{Deng2019} 
\begin{equation}
u_{l+1/2}^{L,T}=
\begin{cases}
\bar u_{\min}+\dfrac{\bar u_{\max}}{2}\left(1+\theta 
\dfrac{\tanh(\beta) + A }{1 + A\tanh(\beta)}\right) \quad 
&\mathrm{if} \quad (\bar u_{l+1}- \bar u_l)(\bar u_l - \bar u_{l-1}) > 0,\\
\bar u_l
&\mathrm{otherwise}.
\end{cases}
\label{eq:LeftTHINC}
\end{equation}
\begin{equation}
u_{l-1/2}^{R,T}=
\begin{cases}
\bar u_{\min}+\dfrac{\bar u_{\max}}{2}\left(1+ \theta A\right) \quad 
&\mathrm{if} \quad (\bar u_{l+1}- \bar u_l)(\bar u_l - \bar u_{l-1}) > 0,\\
\bar u_l
&\mathrm{otherwise}.
\end{cases}
\label{eq:RigthTHINC}
\end{equation}
where $A=\dfrac{B/\cosh(\beta)-1}{\tanh(\beta)}$, $B=\exp[\theta\beta(2C - 1)]$, and $C = \dfrac{\bar u_l-\bar
u_{\min}+\epsilon}{\bar u_{\max}+\epsilon}$. The parameter $\epsilon = 1.0 \times 10^{-20}$ is used to avoid arithmetic
failure. 

The choice of $\beta$ is critical for achieving satisfactory performance of the THINC scheme. He et al.~\cite{He2022BVD}
proved that the THINC scheme with $\beta=\ln 3$ achieves second-order accuracy and satisfies the TVD property. In
addition, the spectral properties of THINC schemes with different $\beta$ showed that a larger value of $\beta$ is more
suitable for capturing sharp discontinuities due to its anti-diffusion effect, with $\beta=1.6$--$2.0$ being
recommended~\cite{Deng2020}. Therefore, in the present work, the THINC scheme with $\beta_s=\ln 3$ is used to
reconstruct relatively smooth interface values, denoted by $u_{l+1/2}^{L,Ts}$ and $u_{l-1/2}^{R,Ts}$, whereas the THINC
scheme with $\beta_h=1.6$ is adopted to enhance the resolution of sharp discontinuities, with the corresponding
interface values denoted by $u_{l+1/2}^{L,Th}$ and $u_{l-1/2}^{R,Th}$.

In the present study, the AIWU4 scheme is used as the high-order candidate, whereas the aforementioned THINC schemes
serve as the low-order candidates. The two-stage BVD reconstruction proposed by Deng et al.~\cite{Deng2019} is employed
to select the desirable reconstructed interface values. The resulting method is referred to as the average-interpolating
wavelet of fourth-order and THINC function of two-stage reconstruction based on the BVD algorithm
($\mathrm{AIW_4T_2\mbox{-}BVD}$). Before presenting the BVD reconstruction, we recall the definition of the total
boundary variation (TBV)~\cite{Deng2019} of a given cell $\mathcal{I}_l$ as
\begin{equation}
\text{TBV}_l=|u_{l+1/2}^L-u_{l+1/2}^R|+|u_{l-1/2}^L-u_{l-1/2}^R|.
\label{eq:TBV}
\end{equation}
Then, the interface values of the variable $u$ can be determined by the following two-stage reconstruction process:\\
\textbf{Stage 1} The first BVD reconstruction using the AIWU4 scheme and the  $\beta_s=\ln3$ THINC scheme.
\begin{enumerate}[(1)]
\item Initialize $\text{flag}(l)=0$, and apply the AIWU4 scheme to compute the corresponding interface values
$u_{l+1/2}^{L,W}$ and $u_{l+1/2}^{R,W}$ at the cell $\mathcal{I}_l$. Calculate the $\mathrm{TBV}_l^W $
\begin{equation}
\text{TBV}_l ^W=|u_{l+1/2}^{L, W}-u_{l+1/2}^{R,W}|+|u_{l-1/2}^{L,W}-u_{l-1/2}^{R,W}|.
\label{eq:TBVwavelet}
\end{equation}

\item Compute the interface values $u_{l+1/2}^{L,Ts}$ and $u_{l+1/2}^{R,Ts}$ by the THINC scheme with $\beta_s=\ln3$,
and obtain the associated $\mathrm{TBV}_l^{Ts}$
\begin{equation}
\text{TBV}_l ^{Ts}=|u_{l+1/2}^{L, Ts}-u_{l+1/2}^{R,Ts}|+|u_{l-1/2}^{L,Ts}-u_{l-1/2}^{R,Ts}|.
\label{eq:TBVTs}
\end{equation}

\item Compare the $\mathrm{TBV}_l^W$ and $\mathrm{TBV}_l^{Ts}$ at each cell, and mark the cell
\begin{equation}
\text{flag}(k)=1, \quad k=l-1,l,l+1, \, \text{if} \; \mathrm{TBV}_l^{Ts} < \mathrm{TBV}_l^W.
\label{eq:MarkcellTs}
\end{equation}

\item Modify the interface values by
\begin{equation}
u_{l+1/2}^L =
\begin{cases}
u_{l+1/2}^{L,Ts}, & \text{if } \mathrm{flag}(l)=1,\\
u_{l+1/2}^{L,W},  & \text{otherwise}.
\end{cases}
\label{eq:Select1L}
\end{equation}
\begin{equation}
u_{l-1/2}^R =
\begin{cases}
u_{l-1/2}^{R,Ts}, & \text{if } \mathrm{flag}(l)=1,\\
u_{l-1/2}^{R,W},  & \text{otherwise}.
\end{cases}
\label{eq:Select1R}
\end{equation}
\end{enumerate}  
\textbf{Stage 2} The second BVD construction with the obtained interface values in Stage 1 and the $\beta_h=1.6$ THINC
scheme.
\begin{enumerate}[(1)]
\item Calculate the $\mathrm{TBV}_l$ using the interface values obtained in Stage 1
\begin{equation}
\mathrm{TBV}_l =|u_{l+1/2}^{L}-u_{l+1/2}^{R}|+|u_{l-1/2}^{L}-u_{l-1/2}^{R}|.
\label{eq:TBVSelect1}
\end{equation}

\item Compute the interface values $u_{l+1/2}^{L,Th}$ and $u_{l+1/2}^{R,Th}$ by the THINC scheme with  $\beta_h=1.6$ and
obtain the associated $\mathrm{TBV}_l^{Th}$
\begin{equation}
\mathrm{TBV}_l ^{Th}=|u_{l+1/2}^{L, Th}-u_{l+1/2}^{R,Th}|+|u_{l-1/2}^{L,Th}-u_{l-1/2}^{R,Th}|.
\label{eq:TBVTh}
\end{equation}

\item Mark the cell by comparing $\mathrm{TBV}_l$ and $\mathrm{TBV}_l^{Th}$
\begin{equation}
\mathrm{flag}(l)=2, \quad \text{if} \; \text{TBV}_l^{Th} < \text{TBV}_l.
\label{eq:MarkcellTh}
\end{equation}

\item Calculate the final interface values by
\begin{equation}
u_{l+1/2}^L =
\begin{cases}
u_{l+1/2}^{L,Th}, & \text{if } \mathrm{flag}(l)=2,\\
u_{l+1/2}^{L},  & \text{otherwise}.
\end{cases}
\label{eq:Select2L}
\end{equation}
\begin{equation}
u_{l-1/2}^R =
\begin{cases}
u_{l-1/2}^{R,Th}, & \text{if } \mathrm{flag}(l)=2,\\
u_{l-1/2}^{R},  & \text{otherwise}.
\end{cases}
\label{eq:Select2R}
\end{equation}
\end{enumerate}  

\noindent\textbf{Remark 1.} The adjacent cells $l-1$ and $l+1$ are marked for reconstruction using the THINC scheme with
$\beta_s=\ln3$ if $\mathrm{TBV}_l^{Ts} < \mathrm{TBV}_l^ W$, thereby capturing the discontinuities without spurious
oscillations. \\
\noindent\textbf{Remark 2.} For the Euler systems, both the computation of the interface values and the BVD
reconstruction are performed in the local characteristic fields. The Roe average is employed for the local
characteristic decomposition~\cite{Roe1997}.

\subsubsection{Spectral properties of average-interpolating wavelet upwind schemes}
\label{subsec:Fourieranalysis}
For the linear AIWU4 scheme, Fourier analysis~\cite{Vichnevetsky1982} is performed to assess its dissipation and
dispersion properties. Since the standard Fourier analysis is not directly applicable to nonlinear schemes, the
approximate dispersion relation (ADR) proposed by Pirozzoli~\cite{Pirozzoli2006} is employed for the nonlinear
$\mathrm{AIW_4T_2\mbox{-}BVD}$ scheme to examine the spectral properties. The dissipation and dispersion properties of
the AIWU4 and $\mathrm{AIW_4T_2\mbox{-}BVD}$ schemes are compared with those of the linear fifth-order upwind scheme and
WENO5-Z scheme, as shown in  Fig.~\ref{fig:krki}. The dissipation coefficients of the AIWU4 scheme remain non-negative
over the entire wave-number range, confirming its numerical stability as the basic high-order candidate. The
$\mathrm{AIW_4T_2\mbox{-}BVD}$ scheme can reproduce the dissipation and dispersion properties of the AIWU4 scheme for
wave numbers up to an intermediate wave-number ($\alpha \approx 1.86$) .

The numerical dissipation of the wavelet schemes is comparable to that of the WENO5-Z scheme in the low and intermediate
wave-number regions. In contrast, in the high wave-number region, the AIWU4 and $\mathrm{AIW_4T_2\mbox{-}BVD}$ schemes
show substantially stronger dissipation, which is beneficial for suppressing high-frequency oscillations. The linear
AIWU4 scheme exhibits better spectral resolution than the linear $5$th order polynomial-based upwind scheme. For the
nonlinear schemes, the $\mathrm{AIW_4T_2\mbox{-}BVD}$ scheme provides significantly better spectral resolution than the
WENO5-Z scheme in intermediate and high wave-number regions. 

Interestingly, by directly comparing the discretization coefficients, we find that the upwind discretization operator
for $\partial u/\partial x$ constructed from the $N=4$, $b=1$ average-interpolating wavelets is identical to that of the
conservative wavelet upwind scheme based on the $N=5$, $b=1$ interpolating wavelet. This coefficient-level equivalence
indicates that the proposed wavelet upwind scheme inherits the numerical merits of the conservative wavelet upwind
scheme based on interpolating wavelets. Moreover, it suggests that the relation between the two discretizations may not
be accidental, but may instead reflect a more general correspondence between the upwind discretization operators for
$\partial u/\partial x$ constructed from average-interpolating wavelets of order $N$ and interpolating wavelets of order
$N+1$. A rigorous proof of this correspondence for arbitrary order is beyond the scope of the present work and remains a
topic for future investigation.

\begin{figure}
    \centering

    \begin{subfigure}[b]{0.45\textwidth}
        \centering
        \includegraphics[width=\linewidth]{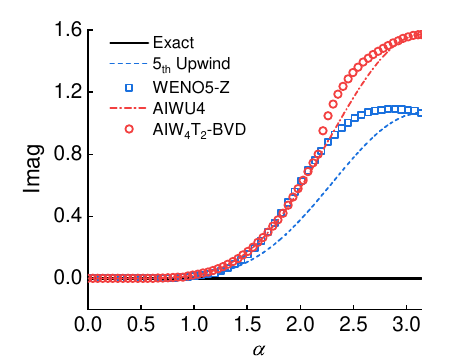}
        \caption{Dissipation.}
        \label{fig:kr}
    \end{subfigure}
    \hfill
    \begin{subfigure}[b]{0.45\textwidth}
        \centering
        \includegraphics[width=\linewidth]{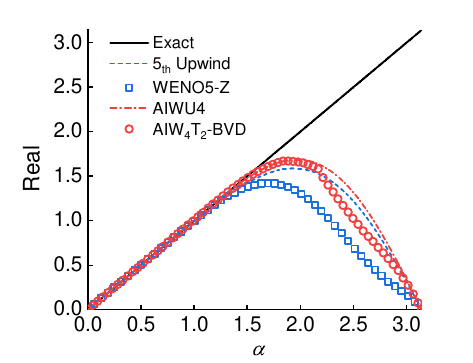}
        \caption{Dispersion.}
        \label{fig:ki}
    \end{subfigure}

    \caption{Comparison of the spectral properties of different schemes.}
    \label{fig:krki}
\end{figure}
\FloatBarrier

\subsection{Conservative adaptive wavelet upwind schemes on multiresolution non-uniform cells}
\subsubsection{Multiresolution cell adaptation}
First, we introduce the cell structure of the average-interpolating wavelet multiresolution analysis, which corresponds
to a graded tree structure in the dyadic framework~\cite{Cohen2003,Roussel2004}. It should be noted that the
connectivity in the tree structure must be always guaranteed. Following Cohen and the co-authors work~\cite{Cohen2003},
definitions of the tree structure are described as follow:
\begin{enumerate}[(1)]
\itemsep=0pt
\item A cell $\mathcal{I}_{J,k}$ is defined as $\mathcal{I}_{J,k}=[\dfrac{k}{2^J}, \dfrac{k+1}{2^J}]$.

\item The root cells \(\mathcal{I}_{J_0,k}\) form the basis of the tree and correspond to the cells at the coarsest
level.

\item For a cell \(\mathcal{I}_{J,k}\), its two child cells at the next finer level are denoted by
\(\mathcal{I}_{J+1,2k}\) and \(\mathcal{I}_{J+1,2k+1}\), respectively.

\item The child cells sharing the same parent cell are called sibling cells.

\item An active cell with no active child cells is referred to as a leaf cell denoted by $\mathcal
I_{J,k}^{\mathrm{leaf}}$, which constitutes the final adaptive cell distribution for the physical variables in the
finite-volume discretization.

\item An active cell with two active child cells is referred to as a wavelet-basis cell denoted by $\mathcal
I_{J,k}^{w}$, which is associated to the wavelet basis function $\psi_{J,k}(x)$ required for the wavelet multiresolution
approximation.

\end{enumerate}

Figure~\ref{fig:gradedtree} shows a schematic illustration of the graded-tree structure for the one-dimensional
average-interpolating wavelet. The cells at different resolution levels, from $J=1$ to $J=5$, are displayed from coarse
to fine. The standard cells are indicated by thin black lines, while the leaf cells and wavelet-basis cells are marked
in orange and blue, respectively. In the construction of conservative adaptive multiresolution wavelet upwind schemes,
the tree structure should satisfy the following conditions:
\begin{enumerate}[(1)]
\itemsep=0pt
\item The root cells are always activated, since they correspond to the scaling functions at the coarsest level and
ensure the baseline accuracy of the wavelet multiresolution approximation.

\item The two child cells of an active parent cell are activated simultaneously, so that hanging cells are avoided and
the multiresolution decomposition between two successive levels remains well defined.

\item  The stencil cells required for computing each wavelet coefficient must be activated, so that the multiresolution
decomposition and reconstruction can be performed consistently by the FWT and inverse FWT algorithms.
\end{enumerate}
\begin{figure}
    \centering
    \includegraphics[width=0.90\textwidth]{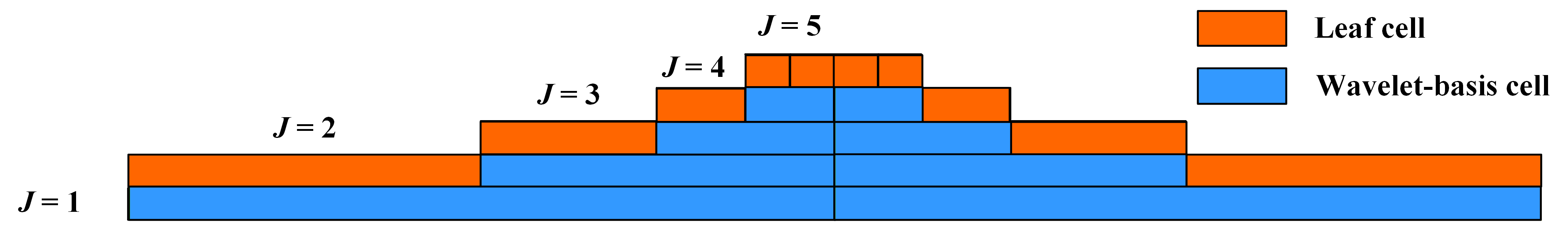}
    \caption{Schematic illustration of the graded-tree structure used in the adaptive average-interpolating wavelet method.}
    \label{fig:gradedtree}
\end{figure}
\FloatBarrier

As presented in subsection.~\ref{subsec:waveletapproximation}, the physical variable $u$ defined on a domain $\Omega$
can be decomposed into the following average-interpolating wavelet multiresolution approximation:
\begin{equation}
\begin{aligned}
&\mathcal{P}_{J_0}^{J_{\text{max}}}u(x)
=\sum_{k \in \mathcal{K}_{J_0}}{\bar{u}_{J_0,k}\phi_{J_0,k}}(x)+
\sum_{J=J_0}^{J_{\text{max}}-1}\sum_{m \in \Lambda_{J}}{d_{J,m}\psi_{J,m}}(x), \\
&\mathcal{K}_{J_0}=\left\{k\in\mathbb Z:\operatorname{supp}\phi_{J_0,k}\cap\Omega\ne\emptyset
\right\},\,
\Lambda_J=\left\{m\in\mathbb Z:\operatorname{supp}\psi_{J,m}\cap\Omega\ne\emptyset
\right\}.
\end{aligned}
\label{eq:umapprox}
\end{equation}
The magnitudes and local decay rates of the wavelet coefficients given in Proposition~\ref{prop:djkvalue} can be used to
evaluate the local regularity of the functions. For the solutions of compressible flows that contain multiscale smooth
structures and discontinuities, the wavelet coefficients remain small magnitudes in large-scale smooth regions, but
becomes significant near small-scale high-frequency structures and discontinuities. Therefore, the wavelet
multiresolution analysis of functions provides a natural discontinuity detector. By discarding wavelet coefficients
$d_{J,m}$ that fall below a prescribed tolerance, we can achieve a sparse representation of the physical variable $u$
with a specified accuracy as follows:
\begin{equation}
\begin{aligned}
&\mathcal{P}_{J_0}^{J_{\text{max}}}u(x)
=\sum_{k \in \mathcal{K}_{J_0}}{\bar{u}_{J_0,k}\phi_{J_0,k}}(x)+
\sum_{J=J_0}^{J_{\text{max}}-1}\sum_{m \in \mathcal{A}_J}{d_{J,m}\psi_{J,m}}(x), \\
&\mathcal{K}_{J_0}=\left\{k\in\mathbb Z:\operatorname{supp}\phi_{J_0,k}\cap\Omega\ne\emptyset
\right\},\,
\mathcal{A}_J=\left\{m\in\mathbb Z:\operatorname{supp}\psi_{J,m}\cap\Omega\ne\emptyset,\, |d_{J,m}| \ge \epsilon_J
\right\},
\end{aligned}
\label{eq:umthreshold}
\end{equation}
where $\epsilon_J$ denotes the level-dependent thresholding parameter, and $\mathcal{A}_J$ represents the collection of
the index of the wavelet-basis cells at the level $J$. Here, a wavelet-basis cell that $|d_{J,m}| \ge \epsilon_J$ is
called a significant cell. For the average-interpolating wavelet, Harten~\cite{Harten1993} showed that, by setting
$\epsilon_J=\dfrac{\epsilon_0}{2^{J_{\max}-1-J}}$, the $L^{\infty}$-norm error of the wavelet multiresolution
approximation in the smooth region $\Omega_s$, away from discontinuities, can be bounded by a prescribed tolerance
$\epsilon_0$ as follows
\begin{equation}
\|u-\mathcal P_{J_0}^{J_{\max}} u\|_{L^\infty(\Omega_s)} < C\,\epsilon_0.
\label{eq:errorcontrol}
\end{equation}
This result suggests that wavelet multiresolution analysis provides a rigorous error-control mechanism, thereby enabling
the development of adaptive numerical methods with automatic, threshold-based error control.

Based on the aforementioned tree structure and theoretical foundation, we next present our multiresolution cell
adaptation strategy. It should be noted that Eq.~\eqref{eq:umthreshold} is sufficient for approximating a function with
localized features within the prescribed tolerance $\epsilon_0$. However, when this criterion is directly used for
time-dependent compressible flows with discontinuities, stability issues may arise. To address this difficulty, Liandrat
and Tchamitchian~\cite{Liandrat1992} introduced an adjacent zone, which consists of wavelet-basis cells whose
coefficients are significant or may become significant during one time-integration step, while the adaptive cell
distribution is kept fixed over that step. M{\"u}ller and his collaborators \cite{Hovhannisyan2014,Gerhard2016} also
emphasized that the prediction set of cells should be constructed such that all detail coefficients not included in this
set remain non-significant after the update and evolution procedures. The proposed multiresolution cell adaptation
strategy also follows this idea. 

For compressible flows governed by hyperbolic conservation laws, discontinuities may develop even from sufficiently
smooth initial conditions. Therefore, the adaptive strategy should include a refinement procedure capable of detecting
newly emerging discontinuities based on the cell averages of the physical variables at the coarsest level. Accordingly,
the proposed multiresolution cell adaptation strategy consists of two components: adaptive cell generation at the
coarsest level $J_0$ and adaptive cell generation at the level $J$, denoted by $\mathrm{ACG}\text{-}J_0$ and
$\mathrm{ACG}\text{-}J$, respectively. The set of adaptive multiresolution cells at $t_n$ is denoted by $\mathcal{M}^n$.
Next, we present how to obtain the multiresolution cells $\mathcal{M}^{n+1}$ from the solutions computed on
$\mathcal{M}^n$.

In the $\mathrm{ACG}\text{-}J_0$ strategy, newly emerging discontinuities are detected using the smoothness indicator
proposed by Jiang and Shu~\cite{JiangShu1996}:
\begin{equation}
\begin{aligned}
IS_{J_0,l}
&=\dfrac{13}{12}\left(\bar u_{J_0,l-1}-2 \bar u_{J_0,l}+\bar u_{J_0,l+1}\right)^2+
\dfrac{1}{4}\left(\bar u_{J_0,l-1}-\bar u_{J_0,l+1}\right)^2 \\
&=\left(u^{\prime}\Delta x\right)^2\left(1+\mathcal{O}\left(\Delta x^2\right)\right),\quad \Delta x= \dfrac{1}{2^{J_0}}.
\end{aligned}
\label{eq:ISJ0}
\end{equation}
Eq.~\eqref{eq:ISJ0} shows that $IS_{J_0,l}$ is mainly governed by $(u')^2(\Delta x)^2$. For each cell at the coarsest
level, if $IS_{J_0,l} > M_0(\Delta x)^2$, where $M_0$ is a user-defined parameter for identifying newly developed steep
gradients, the cell and its neighboring cells are refined by activating their child cells at the next finer level. The
detailed procedure is summarized as follows:
\begin{equation}
\begin{aligned}
&\textbf{ACG-}J_0: \\
&\text{For each cell } l \text{ at the coarsest level } J_0  \\
&\quad
\mathcal{I}_{J_0,l} \text{ are activated }, \\
&\quad
IS_{J_0,l}
=
\frac{13}{12}
\left(
\bar{u}_{J_0,l-1}
-2\bar{u}_{J_0,l}
+\bar{u}_{J_0,l+1}
\right)^2
+
\frac{1}{4}
\left(
\bar{u}_{J_0,l-1}
-\bar{u}_{J_0,l+1}
\right)^2,\\
&\quad
\text{if } IS_{J_0,l}\ge M_0(\Delta x)^2
\quad\Longrightarrow\quad
\left\{
\mathcal{I}_{J_0+1,2k},
\mathcal{I}_{J_0+1,2k+1}
\right\}_{k\in\{l-1,\,l,\,l+1\}}
\text{ are activated.}
\end{aligned}
\label{eq:ACGJ0}
\end{equation}

As for the $\mathrm{ACG}\text{-}J$, significant cells are identified by evaluating the wavelet coefficient $d_{J,k}$ of
the physical variables. In the adaptive process, symmetric wavelets are adopted for function decomposition and
reconstruction. Daubechies~\cite{Daubechies1992} noted that symmetric errors are generally more tolerable than
asymmetric ones in compression applications, suggesting that symmetry is advantageous for obtaining sparse
representations while preserving the essential shape of the reconstructed function. To maintain consistency with the
accuracy of the fourth-order wavelet upwind scheme, the $N=5$ symmetric average-interpolating wavelet is adopted for
decomposition and reconstruction in the adaptive procedure. There are three ingredients in the $\mathrm{ACG}\text{-}J$:
\begin{enumerate}[(a)]
\itemsep=0pt
\item If a significant cell satisfies $|d_{J,k}|\ge 2^{N-1}\epsilon_J$ with $J+2\le J_{\max}-1$, it is refined by two
levels. The cells $I_{J+2,4k},\ldots,I_{J+2,4k+3}$ are first activated. Then, the cell and the corresponding cells in
its adjacent zone are activated. The adjacent zone is determined by
\begin{equation}
J'=J + 1,\quad |k'-2k| \le 2 n_{r0}, 
\label{eq:ACG-JA}
\end{equation}
where $n_{r0}$ is the width of the adjacent zone. In all numerical tests, $n_{r0}=2$ is used unless otherwise specified.

\item If a significant cell satisfies $\epsilon_J \le |d_{J,k}| < 2^{N-1}\epsilon_J$, the cell and the associated cells
within its adjacent zone, are activated. The adjacent zone is specified by
\begin{equation}
J'=J + 1,\quad |k'-2k| \le 2 n_{r}, 
\label{eq:ACG-JA1}
\end{equation}
Here, $n_r=2$ is used in all numerical tests.

\item For each cell $I_{J+1,k'}$ activated in (a) and (b), the stencil cells associated with its parent cell at level
$J$ are also activated to ensure the completeness of the FWT and inverse FWT stencils. Specifically, $I_{J,\lfloor
k'/2\rfloor+s}$ is activated for $s=-n_b,\ldots,n_b$, where $n_b$ denotes the stencil width determined by the
multiresolution order $N$.
\end{enumerate}

The corresponding procedures of the $\mathrm{ACG}\text{-}J$ are presented below:
\begin{equation}
\begin{aligned}
&\textbf{ACG-}J: \\
&\text{For } J=J_{\max}-1,\ldots,J_0,\qquad
\epsilon_J=\frac{\epsilon_0}{2^{J_{\max}-1-J}},\,n_b=(N-1+|b|)/2,\\
&\text{For each wavelet-basis cell} \\
&\quad
\text{if } |d_{J,k}|\ge \epsilon_J,\text{ then}\\
&\qquad\quad
\text{if } |d_{J,k}|\ge 2^{N-1}\epsilon_J,\ J+2\le J_{\max}\\
&\qquad\qquad\quad
\mathcal I_{J+2,4k},\ldots,\mathcal I_{J+2,4k+3}
\text{ are activated,} \\
&\qquad\qquad\quad
\mathcal I_{J,k},\ 
\mathcal I_{J+1,2(k+s)},\ 
\mathcal I_{J+1,2(k+s)+1}
\text{ are activated for } s=-n_{r0},\ldots,n_{r0},\\
&\qquad\quad
\text{else} \\
&\qquad\qquad\quad
\mathcal I_{J,k},\ 
\mathcal I_{J+1,2(k+s)},\ 
\mathcal I_{J+1,2(k+s)+1}
\text{ are activated for } s=-n_r,\ldots,n_r,\\
&\quad
\text{if }I_{J+1,k'} \text{ is activated}, \text{ then}\\
&\qquad\quad
\mathcal I_{J,\lfloor k'/2\rfloor+s} \text{ is activated for } s=-n_b,\ldots,n_b.
\end{aligned}
\label{eq:ACGJ}
\end{equation}

After the $\mathrm{ACG}\text{-}J_0$ and $\mathrm{ACG}\text{-}J$ procedures are performed, the refined multiresolution
cells are obtained. However, although the stencil cells associated with all significant cells and their adjacent cells
have been marked in the $\mathrm{ACG}\text{-}J$ procedure, the resulting marked-cell set may still be insufficient to
guarantee a complete tree structure. By introducing a recursive stencil recheck procedure, a complete set of
multiresolution cells $\mathcal{M}^{n+1}$ can be obtained for advancing the solution from $t_n$ to $t_{n+1}$.
\subsubsection{Adaptive multiresolution wavelet upwind scheme}
In this subsection, we present the main ingredients of the adaptive multiresolution wavelet upwind scheme. Once the
complete set of multiresolution cells $\mathcal{M}^{n+1}$ has been generated, the wavelet coefficients associated with
the wavelet-basis cells $\mathcal I_{J,k}^{w}$ in $\mathcal{M}^{n+1}$ are updated according to
\begin{equation}
d_{J,k}^{n+1}=
\begin{cases}
0, & |d_{J,k}^n|\le \epsilon_J \ \text{and}\ \mathcal{I}_{J,k}^{w}\notin \mathcal{M}^{n+1},\\
d_{J,k}^n, & \text{otherwise}.
\end{cases}
\label{eq:djk_updating}
\end{equation}
It should be noted that the order of the thresholding process is important for achieving desirable accuracy. In many
adaptive multiresolution algorithms, thresholding is performed during the cell or node refinement
procedure~\cite{Harten1995CPAM,Harten1996SIAM, Holmstrom1999}. With such a treatment, some wavelet coefficients that may
still be required in the updated multiresolution cell set $\mathcal{M}^{n+1}$ could be discarded before the final cell
distribution is obtained. In the present strategy, the thresholding process is therefore performed after the complete
multiresolution cell set $\mathcal{M}^{n+1}$ has been generated, in order to retain more physical information from the
current solution.

Then, the cell averages of the physical variable $u$ on the leaf cells $\mathcal I_{J,l}^{\mathrm{leaf}}$ in
$\mathcal{M}^{n+1}$ are computed using the inverse FWT algorithm presented in
subsection~\ref{subsec:waveletapproximation}. Next, the FWT algorithm is applied to the positive- and negative-direction
upwind wavelets to compute the corresponding wavelet coefficients of the variable $u$. The interface values for all
active leaf cells on are then reconstructed by the wavelet multiresolution approximation:
\begin{equation}
\begin{aligned}
&u_{l+1/2}^{L,MW}
=\sum_{k \in \mathcal{K}_{J_0}^{+}}{\bar{u}_{J_0,k}\phi_{J_0,k}^{+}}(x_{l+1/2})+
\sum_{J=J_0}^{J_{\text{max}}-1}\sum_{m \in \mathcal{A}_J^{+}}{d_{J,m}^{+}\psi_{J,m}^{+}}(x_{l+1/2}), \\
&\mathcal{K}_{J_0}^{+}=\left\{k\in\mathbb Z:\operatorname{supp}\phi_{J_0,k}^{+}\cap\Omega\ne\emptyset
\right\},\,
\mathcal{A}_J^{+}=\left\{m\in\mathbb Z:\operatorname{supp}\psi_{J,m}^{+}\cap\Omega\ne\emptyset,\, 
\mathcal{I}_{J,m}^{w} \in \mathcal{M}^{n+1}
\right\},
\end{aligned}
\label{eq:ulAM}
\end{equation}

\begin{equation}
\begin{aligned}
&u_{l+1/2}^{R,MW}
=\sum_{k \in \mathcal{K}_{J_0}^{-}}{\bar{u}_{J_0,k}\phi_{J_0,k}^{-}}(x_{l+1/2})+
\sum_{J=J_0}^{J_{\text{max}}-1}\sum_{m \in \mathcal{A}_J^{-}}{d_{J,m}^{-}\psi_{J,m}^{-}}(x_{l+1/2}), \\
&\mathcal{K}_{J_0}^{-}
=\left\{k\in\mathbb Z:
\operatorname{supp}\phi_{J_0,k}^{-}\cap\Omega\ne\emptyset
\right\},\,
\mathcal{A}_J^{-}
=\left\{m\in\mathbb Z:
\operatorname{supp}\psi_{J,m}^{-}\cap\Omega\ne\emptyset,\,
\mathcal{I}_{J,m}^{w} \in \mathcal{M}^{n+1}
\right\},
\end{aligned}
\label{eq:urAM}
\end{equation}
where the superscripts $(+)$ and $(-)$ correspond to the positive- and negative-direction upwind wavelets, respectively.
For compressible flows with smooth solutions, the interface values obtained by the wavelet multiresolution approximation
on the multiresolution cell set $\mathcal{M}^{n+1}$ lead to high-order accurate results. However, spurious oscillations
may arise in the presence of discontinuities, such as shock waves and contact discontinuities. To suppress these
oscillations, a limiting projection is incorporated into the multiresolution average-interpolating wavelet upwind scheme
on the adaptive multiresolution cell set $\mathcal{M}^{n+1}$. Since the sparse representation of the physical variable
$u$ obtained by the wavelet multiresolution analysis reflects its local regularity, the resulting leaf cell distribution
in the $\mathcal{M}^{n+1}$ can also serve as a natural indicator of local nonsmoothness. In particular, discontinuities
and local steep gradients are preliminarily detected and localized by the finest-level leaf cells, and the limiting
projection is subsequently applied within these nonsmooth regions.

In the present work, the two-stage BVD reconstruction presented in subsection~\ref{subsec:BVDreconstruction} is
performed on the finest-level leaf cells whose stencil cells required by the THINC scheme are all located at the finest
resolution level. Accordingly, the BVD reconstruction in the adaptive multiresolution wavelet upwind scheme is expressed
as
\begin{equation}
\begin{aligned}
&\textbf{BVD reconstruction in the set } \mathcal{M}^{n+1}: \\
&\text{For each finest-level leaf cell } \mathcal{I}_{J,l}^{\mathrm{leaf}} \in \mathcal{M}^{n+1},\\
&\quad
\text{if the stencil cells required by the THINC scheme are all located at the finest level}, \\ 
&\qquad
\text{perform the BVD reconstruction to obtain }
u_{l+1/2}^{L},\,u_{l+1/2}^{R},\\
&\quad
\text{otherwise},\\
&\qquad
u_{l+1/2}^{L}=u_{l+1/2}^{L,MW},\quad
u_{l+1/2}^{R}=u_{l+1/2}^{R,MW}.
\end{aligned}
\label{eq:BVDadaptive}
\end{equation}
The interface values obtained on the multiresolution cell set $\mathcal{M}^{n+1}$ are then used to evaluate the
numerical fluxes through the selected Riemann solver. The solution is subsequently advanced to $t_{n+1}$ using the
SSP-RK3 scheme for time marching.

With the above ingredients presented, the proposed adaptive multiresolution wavelet upwind scheme is referred to as the
adaptive multiresolution average-interpolating wavelet scheme of fourth order with two-stage THINC reconstruction based
on the BVD algorithm, denoted by $\mathrm{AMAIW_4T_2\mbox{-}BVD}$. The overall procedure of the proposed adaptive scheme
is summarized below:
\begin{enumerate}[(1)]
\itemsep=0pt

\item Select a characteristic variable that reflects the localized nonsmoothness of the physical variables. Evaluate the
wavelet coefficients on the wavelet-basis cells $\mathcal I_{J,k}^{w}$ in the $\mathcal{M}^{n}$ using the FWT algorithm
in Eq.~\eqref{eq:FWT} of the $N=5$ symmetric average-interpolating wavelet.

\item Obtain the complete set of multiresolution cells $\mathcal{M}^{n+1}$ through the $\mathrm{ACG}\text{-}J_0$
procedure in Eq.~\eqref{eq:ACGJ0}, the $\mathrm{ACG}\text{-}J$ procedure in Eq.~\eqref{eq:ACGJ}, and the recursive
stencil recheck.

\item Update the wavelet coefficients on the wavelet-basis cells $\mathcal I_{J,k}^{w}$ in $\mathcal{M}^{n+1}$ according
to Eq.~\eqref{eq:djk_updating}, and then compute the physical variables on the leaf cells $\mathcal
I_{J,l}^{\mathrm{leaf}}$ in $\mathcal{M}^{n+1}$ using the inverse FWT algorithm in Eq.~\eqref{eq:IFWT} of the $N=5$
symmetric average-interpolating wavelet.

\item Evaluate the wavelet coefficients on the wavelet-basis cells $\mathcal I_{J,k}^{w}$ in $\mathcal{M}^{n+1}$ using
the FWT algorithms in Eq.~\eqref{eq:FWT} of the $N=4$, $b=1$ and $b=-1$ asymmetric upwind average-interpolating
wavelets. Then, compute the left and right interface values at the leaf cells using the corresponding upwind wavelet
multiresolution approximations given in Eqs.~\eqref{eq:ulAM} and~\eqref{eq:urAM}, respectively. Perform the BVD
reconstruction in Eq.~\eqref{eq:BVDadaptive} to obtain the final interface values, and advance the solution from $t_n$
to $t_{n+1}$.

\item If $t_{n+1}=t_{\mathrm{end}}$, the computation is terminated; otherwise, repeat procedures (1)--(4).

\end{enumerate}

\noindent\textbf{Remark 1.} For the Euler systems, the BVD reconstruction is also performed in the local characteristic
fields. Since the wavelet multiresolution approximation on the given multiresolution cell set $\mathcal{M}^{n+1}$ is
linear, the corresponding interface values can be computed entirely in the physical space. Only those used in the BVD
reconstruction need to be transformed into the local characteristic fields. Thus, the proposed adaptive multiresolution
wavelet upwind scheme with the BVD reconstruction avoids repeated characteristic projections over different wavelet
reconstruction stencils, which simplifies the reconstruction procedure and reduces the overhead for systems of
conservation laws. In contrast, the interface values of the THINC scheme must be reconstructed in the local
characteristic fields because the THINC reconstruction is nonlinear. \\
\noindent\textbf{Remark 2.} The conventional conservative adaptive multiresolution methods hybridized with classical
finite volume schemes~\cite{Cohen2003, Roussel2004, Tang2008} usually require additional marking of ghost cells within
the stencil of the discretization scheme near the interface between coarse- and fine-level cells. The corresponding cell
averages must be reconstructed by the inverse FWT algorithm before computing the interface values. This increases the
complexity of the algorithmic structure and may reduce computational efficiency. In contrast, the proposed adaptive
multiresolution wavelet upwind scheme does not require this additional procedure. The interface values are computed
directly by the wavelet multiresolution approximation, making the algorithm simpler and more efficient. \\
\noindent\textbf{Remark 3.} In the proposed adaptive wavelet upwind scheme, the BVD reconstruction is restricted to the
finest-level leaf cells. This localized implementation substantially reduces the associated computational cost and
provides a novel framework for developing adaptive BVD-based shock-capturing methods. \\
\noindent\textbf{Remark 4.} It should be noted that the proposed conservative adaptive multiresolution wavelet upwind
scheme is not limited to the fourth-order accurate formulation. Higher-order conservative adaptive wavelet upwind
schemes can also be constructed by using asymmetric average-interpolating wavelets with higher multiresolution orders.
\subsubsection{Analysis of the conservation property of numerical schemes}
\label{subsec:analysisconservation}
In this subsection, we analyze the conservativity of the proposed wavelet upwind scheme and other main numerical schemes
on both fixed and adaptive meshes. Sebastian and Shu~\cite{Sebastian2003} pointed out that interpolation, regardless of
its specific form, is generally nonconservative unless it is based on cell averages. For the Shu--Osher conservative
finite difference form, Merriman~\cite{Merriman2003} showed that its strict conservation property is essentially tied to
uniform computational meshes. Although conservativity holds for certain specially stretched nonuniform meshes in the
physical domain, such as quadratically or exponentially stretched meshes, this does not imply that the Shu--Osher form
provides a directly conservative discretization on arbitrary nonuniform physical meshes. Consequently, adaptive finite
difference schemes based on the Shu--Osher form, in which the discretization is directly carried out on adaptive meshes
in the physical domain, are generally nonconservative~\cite{Merriman2003}. 

Merriman~\cite{Merriman2003} suggested that it might be possible to develop a fully adaptive version of the Shu--Osher
finite difference scheme, based on Harten's adaptive multiresolution framework using average-interpolating wavelets.
Motivated by this conjecture, we examine whether the Shu--Osher conservative finite difference form can be directly
extended to an adaptive multiresolution average-interpolating wavelet setting. Through an observation of the proposed
adaptive multiresolution wavelet upwind scheme, we provide a counterexample to such a direct extension, while not
excluding the possibility of achieving conservativity with additional correction mechanisms. Considering a flux function
$f(u)$ satisfying $f'(u)\ge 0$, the one-dimensional scalar conservation law in Eq.~\eqref{eq:model} is discretized by
the following upwind wavelet multiresolution approximation:
\begin{equation}
\begin{aligned}
\frac{\mathrm{d} u_l(t)}{\mathrm{d} t}
&= -\frac{1}{\Delta x_l}
\left(\hat f_{l+1/2}^{L,MW}-\hat f_{l-1/2}^{L,MW}\right), \\
\hat f_{l+1/2}^{L,MW}
&=
\sum_{k \in \mathcal{K}_{J_0}^{+}}
f_{J_0,k}\phi_{J_0,k}^{+}(x_{l+1/2})
+
\sum_{J=J_0}^{J_{\max}-1}
\sum_{m \in \mathcal{A}_J^{+}}
d_{J,m}^{+,f}\psi_{J,m}^{+}(x_{l+1/2}).
\end{aligned}
\label{eq:ODEsFDM}
\end{equation} 
If the above discretization were conservative in the sense of the Shu--Osher form, the flux values used in the
multiresolution approximation would have to satisfy a consistency relation between two successive resolution levels.
Specifically, the flux value at a coarse-level node should be equal to the arithmetic average of the corresponding two
fine-level flux values, namely
\begin{equation}
f_{J,k}=\dfrac{1}{2}\left(f_{J+1,2k}+f_{J+1,2k+1}\right).
\end{equation} 
However, this relation is a consistency requirement of the average-interpolating wavelet for averaged quantities, and it
is not generally satisfied for a nonlinear flux function $f(u)$ on adaptive multiresolution nodes. Therefore, when the
adaptive multiresolution wavelet upwind approximation is directly cast into the conservative Shu--Osher finite
difference form, the resulting scheme is generally nonconservative on adaptive meshes.

It is worth noting that the present nonconservative formulation may still preserve a discrete weighted sum of the nodal
values. Specifically, the quantity $\sum_l u_l \Delta x_l$ may remain unchanged during the computation. However, this
property should not be confused with strict conservation of cell-average quantities, because $u_l$ represents a nodal or
pointwise degree of freedom rather than the cell average over $I_l$. It should be noted that this observation does not
contradict the conservation property of the Shu--Osher finite difference formulation on uniform meshes. This is because,
on uniform meshes, the nodal values can be consistently interpreted as cell averages due to the commutation property
between the cell-average operator and the finite-difference operator~\cite{Merriman2003}. Consequently, the preservation
of the nodal sum admits a conservative interpretation in terms of cell averages. In contrast, for adaptive
multiresolution meshes, this commutation property is generally lost, and the adaptive Shu--Osher finite-difference
formulation therefore no longer preserves conservation in the cell-average sense. Nevertheless, for low-order numerical
schemes, the discrepancy between nodal values and cell averages may be comparable to the leading truncation error.
Therefore, preservation of the weighted nodal sum may be regarded as an approximate conservation property within the
accuracy of the scheme.

For the finite volume scheme on fixed uniform or nonuniform meshes given in Eq.~\eqref{eq:ODEs}, the conservation
property is preserved both locally at the cell level and globally over the physical domain. This is because the scheme
evolves cell averages by numerical fluxes across cell interfaces, so that the flux contribution at each internal
interface is shared by two neighboring cells with opposite signs. This mechanism is also embedded in compact high-order
schemes such as the DG scheme~\cite{Cockburn1989} and MCV scheme~\cite{Ii2009}. Although these schemes use multiple
degrees of freedom to represent the solution within each cell, strict conservation is ensured through the evolution of
the cell averages. In DG schemes, the constant mode corresponds to the element average and satisfies a flux-difference
update when the test function is taken as unity. In MCV schemes, the cell average moment is explicitly constrained and
updated by numerical fluxes across cell interfaces. Moreover, we note that although flux reconstruction methods evolve
nodal values similar to finite difference schemes, cell-average conservation is inherently incorporated through the
construction of the correction flux function~\cite{Huynh2007}. The above discussion indicates that strict conservation
in modern conservative schemes is essentially associated to the evolution of cell averages.

For existing pure wavelet-based adaptive schemes, the associated wavelet basis functions, such as Daubechies wavelets
and interpolating wavelets, are generally not constructed from cell averages. As a result, strict conservation is not
inherently embedded in the wavelet representation itself, and the conservation property should be examined from both the
spatial discretization and the adaptive redistribution process. From the perspective of PDE discretization, wavelet
Galerkin methods within the classical Galerkin framework may preserve global conservation since the governing
conservation law is enforced in a weak sense. However, they generally do not directly yield local conservation in the
finite-volume sense because the resulting discrete equations do not explicitly provide a cell-wise flux balance. As
discussed by Abgrall~\cite{Abgrall2023}, Galerkin-type methods may be recast into a conservative residual-distribution
or finite volume form if the element residuals satisfy a suitable local conservation relation. However, this relies on
an additional residual-distribution or flux-reconstruction interpretation. For wavelet collocation methods, a
conservative form of the wavelet collocation upwind scheme on uniform cells was derived in our previous
study~\cite{Yang2025AMS} by rewriting the conventional wavelet approximation into an interface flux-difference form. It
should be noted that the conservative form of the wavelet collocation upwind scheme essentially falls within the
Shu--Osher conservative finite-difference framework. It is thus not strictly conservative on adaptive nonuniform cells
in the physical domain. For function reconstruction during the adaptive process, global conservation may be retained
only if the compact supports of the discarded wavelet functions are fully contained in the computational domain, owing
to their zero-mean property. Otherwise, truncation or redistribution of wavelet coefficients may alter the global
integral and thus introduce non-conservative errors. 

The above analysis indicates that existing pure wavelet-based adaptive schemes whose wavelet bases are not constructed
from cell averages generally encounter inherent difficulties in achieving strict conservation during both the spatial
discretization and the adaptive redistribution of degrees of freedom. This observation motivates the present
construction of a cell-average-based adaptive multiresolution wavelet scheme. In the present study, the proposed
adaptive multiresolution wavelet upwind scheme exploits the advantages of average-interpolating wavelets in two aspects:
the conservative discretization of the governing equations and the conservative redistribution of physical variables
during cell adaptation. Specifically, the upwind discretization is constructed using a pair of average-interpolating
wavelets with upwind properties in the positive and negative directions, respectively. Meanwhile, the decomposition and
reconstruction of physical variables during adaptation are performed using symmetric average-interpolating wavelets,
which preserve the cell-average representation while retaining the essential solution profiles and improving the
compression capability. Consequently, the proposed scheme maintains strict conservation through both the flux-difference
evolution of the governing equations and the conservative redistribution of physical quantities on adaptive
multiresolution cells.
\subsection{Computational complexity analysis}
At the end of this section, we discuss the computational complexity of the proposed $\mathrm{AIW_4T_2\mbox{-}BVD}$ and
$\mathrm{AMAIW_4T_2\mbox{-}BVD}$ schemes. We mainly focus on the computational costs of the wavelet multiresolution
approximation, FWT, and inverse FWT, which constitute the primary computational components of the proposed schemes.

For the $\mathrm{AIW_4T_2\mbox{-}BVD}$ scheme on uniform cells, the reconstruction stencil of the wavelet approximation
for computing the interface values involves six cells. This stencil size is larger than that of the WENO5-Z scheme but
smaller than that of the WENO7-Z scheme. Therefore, the computational cost of the interface reconstruction in the
$\mathrm{AIW_4T_2\mbox{-}BVD}$ scheme remains comparable to those of classical high-order shock-capturing schemes.

For the $\mathrm{AMAIW_4T_2\mbox{-}BVD}$ scheme, the wavelet multiresolution approximation operator is linear and
sparse. For each active leaf cell, the number of cells involved in the reconstruction stencil is bounded by
$\left(J_{\max}-J_0+1\right)2(N-1)$. Let $N_1$ denote the total number of active leaf cells. Then, the computational
cost of the wavelet multiresolution approximation is bounded by
$\mathcal{O}\left(\left(J_{\max}-J_0+1\right)2(N-1)N_1\right)$. In practical computations, the number of resolution
levels is prescribed to be comparatively small; for example, $J_{\max}-J_0\leq 8$ in the present study. Therefore, for a
fixed multiresolution order $N$, this bound reduces to $\mathcal{O}(N_1)$.

For a full multiresolution representation, the FWT and inverse FWT have linear complexity because the computational
costs over successive resolution levels form a geometric series. In the present adaptive implementation, the
multiresolution representation is further sparsified through the thresholding process, and the transforms are performed
only on the active leaf cells and retained wavelet coefficients. Hence, their computational costs scale linearly with
the number of involved active degrees of freedom. Therefore, the proposed adaptive multiresolution wavelet upwind scheme
has a linear or nearly linear computational cost in practical computations, while retaining the advantages of adaptive
data compression and conservative high-order shock-capturing discretization. This makes it an efficient framework for
solving compressible flows.
\section{Numerical results}
\label{Sec:Numerical_test}
In this section, we present numerical results for several benchmark tests to verify the order of accuracy,
conservativity, computational efficiency, and robustness of the proposed conservative adaptive multiresolution wavelet
upwind scheme. The SSP-RK3 scheme~\cite{Gottlieb2001} is employed for time marching, with a CFL number of 0.4 unless
otherwise specified. The time step for the adaptive wavelet upwind scheme is determined by the mesh size at the finest
level. For the one-dimensional Euler systems, the HLLC Riemann solver~\cite{Toro2013} is employed to compute the
numerical fluxes. To measure the numerical errors, the following $L^1$- and $L^{\infty}$-norm errors are defined:
\begin{equation}
\|E\|_{L^1}
=
\sum_{l}
\left|\bar u_l^s-\bar u_l^e\right|\Delta x_l,
\label{eq:errorl1}
\end{equation}
\begin{equation}
\|E\|_{L^{\infty}}
=
\max\limits_{l}
\left|\bar u_l^s-\bar u_l^e\right|,
\label{eq:errorlinf}
\end{equation}
where $\bar u_l^e$ and $\bar u_l^s$ denote the exact and numerical solutions at the $l$th leaf cell, respectively. To
quantify the conservativity of the proposed $\mathrm{AMAIW_4T_2\mbox{-}BVD}$ scheme, the conservation error is defined
as
\begin{equation}
E_{\mathrm{cons}}^n
=
\left|
\sum_{\mathcal I_l \in \mathcal{M}^{n}} \bar{u}_l^{\,n}\Delta x_l
-
\sum_{\mathcal I_l \in \mathcal{M}^{0}} \bar{u}_l^{\,0}\Delta x_l
\right|.
\label{eq:errorconser}
\end{equation}

\subsection{Linear scalar equation in one dimension}
In this subsection, numerical tests are conducted to the following one-dimensional linear scalar equation with the
periodic boundary condition by the proposed wavelet upwind schemes:
\begin{equation}
u_t+u_x=0, \quad x \in [-1,1].
\label{eq:1DlinearEquation}
\end{equation}
\subsubsection{Accuracy and error control tests of sine wave advection}
We first consider the advection of a sine wave to evaluate the order of accuracy of the proposed AIWU4 and
$\mathrm{AIW_4T_2\mbox{-}BVD}$ schemes with the initial condition:
\begin{equation}
u(x,0)=\sin(\pi x).
\label{eq:sinwave}
\end{equation}

A CFL number of $1/16$ is used in this test to minimize the influence of the time integration scheme on the accuracy of
the numerical results. The solutions at $t=2.0$ are computed through refinement tests, and the corresponding errors and
orders of accuracy in different norms are listed in Table~\ref{tab:accuracy_linear}, where $N_1$ denotes the total
number of leaf cells. It can be observed that the orders of accuracy of the proposed schemes agree well with the
theoretical order. Moreover, the numerical errors and convergence orders of the linear $\mathrm{AIWU4}$ scheme are
identical to those of the $\mathrm{AIW_4T_2\mbox{-}BVD}$ scheme. This indicates that, for smooth solutions, the BVD
reconstruction selects the high-order average-interpolating wavelet upwind candidate.
\begin{table}[!htbp]
\centering
\caption{Numerical errors and orders of accuracy for the linear scalar equation.}
\label{tab:accuracy_linear}
\begin{tabular*}{0.95\textwidth}{@{\extracolsep{\fill}}cccccc}
\toprule
Schemes & $N_1$ & $L^\infty$ error & $L^\infty$ order & $L^1$ error & $L^1$ order \\
\midrule
\multirow{6}{*}{$\mathrm{AIWU4}$} & 16  & $3.017\times10^{-3}$ & --   & $3.817\times10^{-3}$ & --   \\
& 32  & $1.850\times10^{-4}$ & 4.03 & $2.350\times10^{-4}$ & 4.02 \\
& 64  & $1.149\times10^{-5}$ & 4.01 & $1.462\times10^{-5}$ & 4.01 \\
& 128 & $7.173\times10^{-7}$ & 4.00 & $9.133\times10^{-7}$ & 4.00 \\
& 256 & $4.483\times10^{-8}$ & 4.00 & $5.707\times10^{-8}$ & 4.00 \\
& 512 & $2.803\times10^{-9}$ & 4.00 & $3.569\times10^{-9}$ & 4.00 \\
\multirow{6}{*}{$\mathrm{AIW_4T_2\mbox{-}BVD}$} & 16  & $3.017\times10^{-3}$ & --   & $3.817\times10^{-3}$ & --   \\
& 32  & $1.850\times10^{-4}$ & 4.03 & $2.350\times10^{-4}$ & 4.02 \\
& 64  & $1.149\times10^{-5}$ & 4.01 & $1.462\times10^{-5}$ & 4.01 \\
& 128 & $7.173\times10^{-7}$ & 4.00 & $9.133\times10^{-7}$ & 4.00 \\
& 256 & $4.483\times10^{-8}$ & 4.00 & $5.707\times10^{-8}$ & 4.00 \\
& 512 & $2.803\times10^{-9}$ & 4.00 & $3.569\times10^{-9}$ & 4.00 \\
\bottomrule
\end{tabular*}
\end{table}

Next, we assess the error-control capability of the proposed adaptive multiresolution wavelet upwind scheme. The
numerical errors under different prescribed tolerances $\epsilon_0$ are computed using the proposed
$\mathrm{AMAIW_4T_2\mbox{-}BVD}$ scheme with $J_0=4$ and $J_{\max}=8$, as shown in Table~\ref{tab:accuracy_epsilon}. It
can be seen that the errors in the $L^1$- and $L^\infty$-norm are consistent with the prescribed tolerances. This
confirms the threshold-based error-control capability of the proposed adaptive scheme, in accordance with the
theoretical estimate given in Eq.~\eqref{eq:errorcontrol}.
\begin{table}[!htbp]
\centering
\caption{Error-control tests with $J_0=4$, $J_{\max}=8$ under different thresholding parameters for the sine wave
advection.}
\label{tab:accuracy_epsilon}
\begin{tabular*}{0.95\textwidth}{@{\extracolsep{\fill}}cccc}
\toprule
$\epsilon_0$ & $N_1$ & $L^\infty$ error & $L^1$ error \\
\midrule
$1.000\times10^{-5}$  & 64  & $1.149\times10^{-5}$ & $1.462\times10^{-5}$ \\
$1.000\times10^{-6}$  & 128 & $5.842\times10^{-6}$ & $2.676\times10^{-6}$ \\
$1.000\times10^{-7}$  & 128 & $7.168\times10^{-7}$ & $9.125\times10^{-7}$ \\
$1.000\times10^{-8}$  & 256 & $1.192\times10^{-7}$ & $8.086\times10^{-8}$ \\
$1.000\times10^{-9}$  & 256 & $4.479\times10^{-8}$ & $5.702\times10^{-8}$ \\
$1.000\times10^{-10}$ & 512 & $2.830\times10^{-9}$ & $3.883\times10^{-9}$ \\
\bottomrule
\end{tabular*}
\end{table}

We further evaluate the conservativity of the proposed adaptive scheme. In the following, the numerical result denoted
by level $J$ refers to the solution computed by the $\mathrm{AIW_4T_2\mbox{-}BVD}$ scheme on uniform cells at resolution
level $J$. The conservation errors obtained by the adaptive $\mathrm{AMAIW_4T_2\mbox{-}BVD}$ scheme are compared with
those of the $\mathrm{AIW_4T_2\mbox{-}BVD}$ scheme in Fig.~\ref{fig:sinconserror}. As shown in
Fig.~\ref{fig:sinconserrorJmax}, the conservation errors remain within machine precision for both schemes. Moreover,
Figure~\ref{fig:sinconserrote0} shows that the conservation errors are nearly independent of the prescribed thresholding
tolerances and maintain on the order of machine precision. These results confirm that the proposed adaptive
multiresolution wavelet upwind scheme preserves strict conservation during both the discretized evolution and the
adaptive cell redistribution process.
\begin{figure}[!htbp]
    \centering

    \begin{subfigure}[b]{0.45\textwidth}
        \centering
        \includegraphics[width=\linewidth]{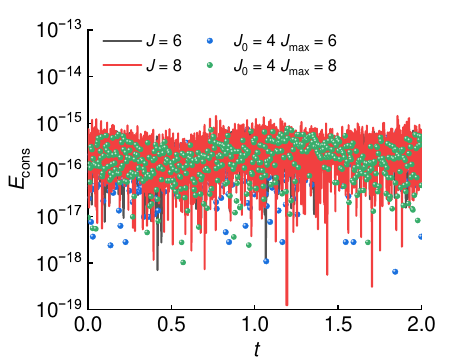}
        \caption{$\epsilon_0=1.0 \times 10^{-6}$.}
        \label{fig:sinconserrorJmax}
    \end{subfigure}
    \hfill
    \begin{subfigure}[b]{0.45\textwidth}
        \centering
        \includegraphics[width=\linewidth]{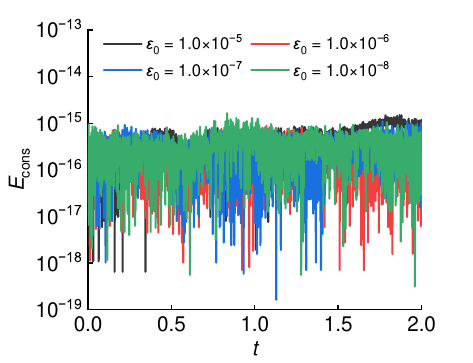}
        \caption{$J_0=4,J_{\max}=8$.}
        \label{fig:sinconserrote0}
    \end{subfigure}

    \caption{Comparison of conservation errors between the adaptive $\mathrm{AMAIW_4T_2\mbox{-}BVD}$ and uniform-cell
    $\mathrm{AIW_4T_2\mbox{-}BVD}$ schemes for the sine wave advection.}
    \label{fig:sinconserror}
\end{figure}

\subsubsection{Advection of complex waves}
To examine the performance of the proposed adaptive multiresolution wavelet upwind scheme in solving profiles with
different local regularity, the propagation of a complex wave devised by Jiang and Shu~\cite{JiangShu1996} is simulated.
The initial condition of this test consists of a smooth but narrow combination of Gaussian, a square wave, a sharp
triangle wave, and a half ellipse expressed as 
\begin{equation}
\begin{aligned}
u(x,0)=
\begin{cases}
\dfrac{1}{6}\left(G(x,\beta,z-\delta)+G(x,\beta,z+\delta)+4 G(x,\beta,z)\right) \quad
& -0.8 \le x \le -0.6,\\
1 \quad
& -0.4 \le x \le -0.2,\\
1-|10(x-0.1)| \quad
& 0 \le x \le 0.2,\\
\dfrac{1}{6}\left(F(x,\alpha,a-\delta)+F(x,\alpha,a+\delta)+4 F(x,\alpha,a)\right) \quad
& 0.4 \le x \le 0.6,\\
0 \quad
& \mathrm{otherwise},
\end{cases}   
\end{aligned}
\label{eq:JiangShu}
\end{equation}
where functions $G$ and $F$ are defined as $G(x,\beta,z)=e^{-\beta(x-z)^2}$, and
$F(x,\alpha,a)=\sqrt{\max{\left(1-\alpha^2\left(x-a\right)^2,0\right)}}$. The constants are given as $a=0.5$, $z=-0.7$,
$\delta=0.005$, $\alpha=10$, $\beta=\ln 2/\left(36\delta^2\right)$.

We first evaluate the capability of the proposed wavelet upwind schemes to resolve solution profiles with different
smoothness, while maintaining high resolution and avoiding spurious oscillations. The numerical results at $t=2.0$ and
$t=500.0$, computed by the $\mathrm{AIW_4T_2\mbox{-}BVD}$ scheme on uniform cells, are compared with those obtained by
the WENO5-Z scheme~\cite{Borges2008,Don2013} in Fig.~\ref{fig:JScompareWENO_uniform}. As shown in
Fig.~\ref{fig:JSt2compareWENO}, the proposed wavelet upwind scheme with the BVD reconstruction produces evidently
steeper jump discontinuities than the WENO5-Z scheme. For the long-time evolution case shown in
Fig.~\ref{fig:JSt500compareWENO}, the $\mathrm{AIW_4T_2\mbox{-}BVD}$ scheme not only achieves higher accuracy in smooth
regions, but also captures significantly sharper discontinuities. These results demonstrate the numerical advantages of
the proposed wavelet upwind scheme with the BVD reconstruction.
\begin{figure}[!htbp]
    \centering

    \begin{subfigure}[b]{0.45\textwidth}
        \centering
        \includegraphics[width=\linewidth]{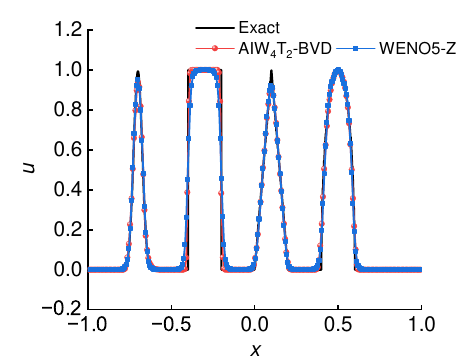}
        \caption{$t=2.0, N_1=256$.}
        \label{fig:JSt2compareWENO}
    \end{subfigure}
    \hfill
    \begin{subfigure}[b]{0.45\textwidth}
        \centering
        \includegraphics[width=\linewidth]{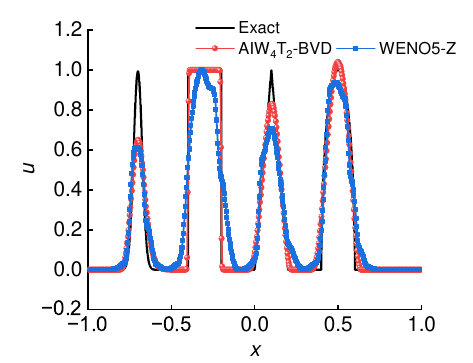}
        \caption{$t=500.0,N_1=512$.}
        \label{fig:JSt500compareWENO}
    \end{subfigure}

    \caption{Numerical results of the Jiang and Shu's problem computed by the $\mathrm{AIW_4T_2\mbox{-}BVD}$ and WENO5-Z
    schemes at $t=2.0$ and $t=500.0$.}
    \label{fig:JScompareWENO_uniform}
\end{figure}

Next, we examine the numerical performance of the proposed adaptive $\mathrm{AMAIW_4T_2\mbox{-}BVD}$ scheme. For this
problem, $n_{r0}$ is set to 6. In Fig.~\ref{fig:JSAdaptivecompareuniform}, the numerical solutions at $t=2.0$ computed
by the adaptive $\mathrm{AMAIW_4T_2\mbox{-}BVD}$ scheme are compared with those obtained by the uniform-cell
$\mathrm{AIW_4T_2\mbox{-}BVD}$ scheme at levels $J=7$ and $J=10$. The uniform-cell solutions at the coarse level $J=7$
are considerably smeared near discontinuities and localized sharp features, whereas the adaptive scheme yields much more
accurate profiles by locally refining cells in these regions. The adaptive results show excellent agreement with the
corresponding finest uniform-cell results at $J=10$, while using far fewer active cells. These results demonstrate that
the proposed adaptive scheme retains the discontinuity and localized-feature capturing capability of the underlying
$\mathrm{AIW_4T_2\mbox{-}BVD}$ scheme at the finest resolution level. They also verify the effectiveness of applying the
BVD reconstruction only on the finest-level cells in the adaptive framework. 
\begin{figure}[!htbp]
    \centering

    \begin{subfigure}[b]{0.45\textwidth}
        \centering
        \includegraphics[width=\linewidth]{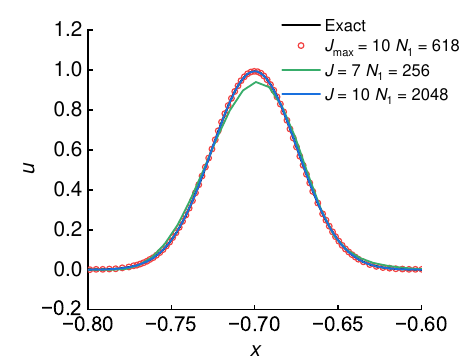}
        \caption{Gaussian profile.}
        \label{fig:JSAdaptivecompareuniformGauss}
    \end{subfigure}
    \hfill
    \begin{subfigure}[b]{0.45\textwidth}
        \centering
        \includegraphics[width=\linewidth]{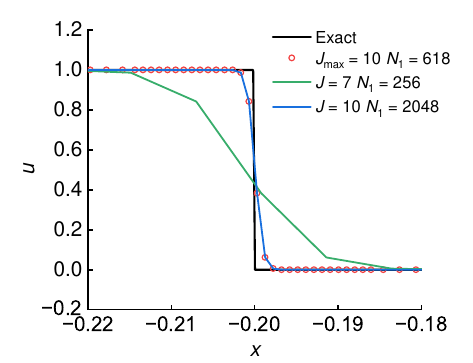}
        \caption{Jump discontinuity.}
        \label{fig:JSAdaptivecompareuniformJump}
    \end{subfigure}

    \vspace{0.3cm}

    \begin{subfigure}[b]{0.45\textwidth}
        \centering
        \includegraphics[width=\linewidth]{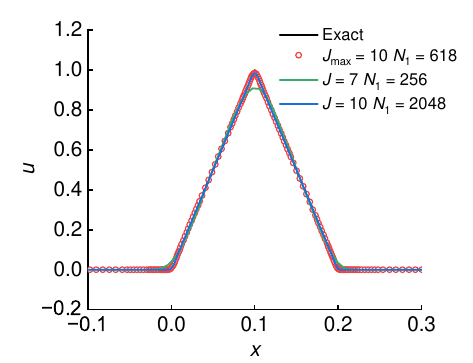}
        \caption{Triangle wave profile.}
        \label{fig:JSAdaptivecompareuniformTri}
    \end{subfigure}
    \hfill
    \begin{subfigure}[b]{0.45\textwidth}
        \centering
        \includegraphics[width=\linewidth]{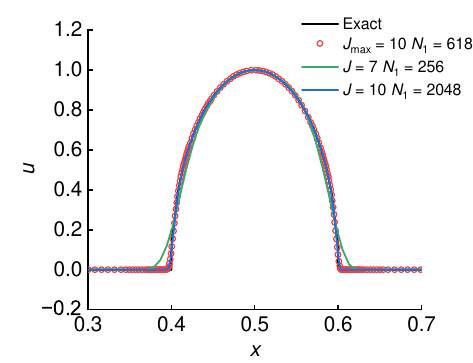}
        \caption{Half ellipse profile.}
        \label{fig:JSAdaptivecompareuniformEllip}
    \end{subfigure}

    \caption{Comparison of the numerical results at $t=2.0$ of the Jiang and Shu's problem obtained by the proposed
    $\mathrm{AMAIW_4T_2\mbox{-}BVD}$ and the $\mathrm{AIW_4T_2\mbox{-}BVD}$ schemes for different solution profiles,
    $J_0=6$, $\epsilon_0=1.0 \times 10^{-5}$.}
    \label{fig:JSAdaptivecompareuniform}
\end{figure}

We further compute the numerical results using the $\mathrm{AMAIW_4T_2\mbox{-}BVD}$ scheme with different finest
resolution levels and thresholding parameters to examine the accuracy, stability, and robustness of the proposed
adaptive multiresolution wavelet upwind scheme. The numerical solutions at $t=2.0$ with a fixed tolerance
$\epsilon_0=1.0 \times 10^{-5}$ and various finest resolution levels are shown in Fig.~\ref{fig:JSJmaxsolution}. As the
finest resolution level increases, the $\mathrm{AMAIW_4T_2\mbox{-}BVD}$ scheme stably resolves sharper discontinuities
and achieves higher accuracy in smooth regions through sparse adaptive representations. 
\begin{figure}[!htbp]
    \centering

    \begin{subfigure}[b]{0.45\textwidth}
        \centering
        \includegraphics[width=\linewidth]{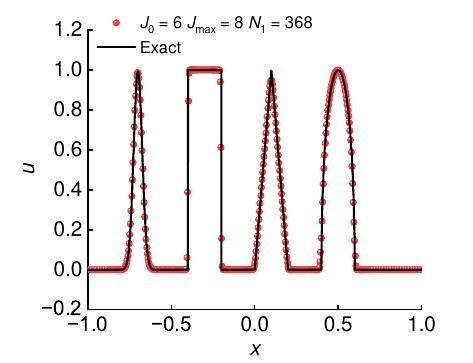}
        \caption{$J_{\max}=8$.}
        \label{fig:JSJmax8solution}
    \end{subfigure}
    \hfill
    \begin{subfigure}[b]{0.45\textwidth}
        \centering
        \includegraphics[width=\linewidth]{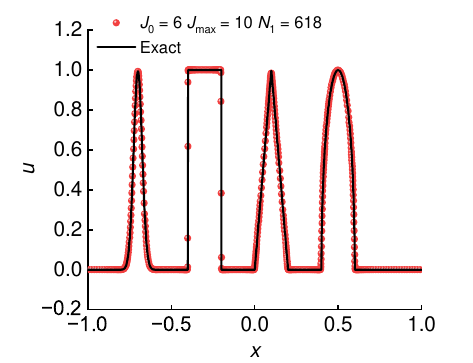}
        \caption{$J_{\max}=10$.}
        \label{fig:JSJmax10solution}
    \end{subfigure}

    \vspace{0.3cm}

    \begin{subfigure}[b]{0.45\textwidth}
        \centering
        \includegraphics[width=\linewidth]{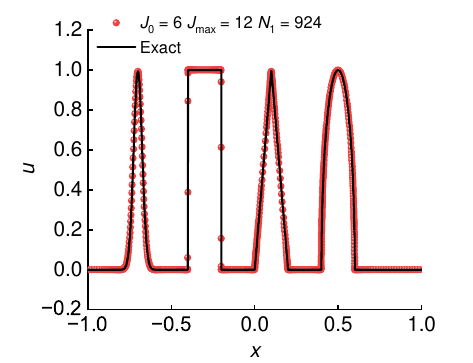}
        \caption{$J_{\max}=12$.}
        \label{fig:JSJmax12solution}
    \end{subfigure}

    \caption{Numerical solutions at $t=2.0$ of the Jiang and Shu's problem obtained by the proposed adaptive
    $\mathrm{AMAIW_4T_2\mbox{-}BVD}$ scheme with different finest resolution levels, $J_0=6$, $\epsilon_0=1.0 \times
    10^{-5}$.}
    \label{fig:JSJmaxsolution}
\end{figure}

To evaluate the error-control capability of the proposed adaptive wavelet upwind scheme for problems involving both
discontinuities and smooth structures, the absolute errors of the numerical results at $t=2.0$ computed by the
$\mathrm{AMAIW_4T_2\mbox{-}BVD}$ scheme with different thresholding tolerances are illustrated in
Fig.~\ref{fig:JSepsilonsolution}. The errors obtained by the adaptive scheme are also compared with those of the
uniform-cell $\mathrm{AIW_4T_2\mbox{-}BVD}$ scheme at the resolution levels $J=J_0$ and $J=J_{\max}$. It can be observed
that the errors of the adaptive scheme generally lie between those of the uniform-cell scheme at the coarsest and finest
resolution levels. This indicates that the nonuniform cell distribution and adaptive cell redistribution do not
deteriorate the basic accuracy guaranteed by the wavelet approximation at the coarsest level. Moreover, as the
prescribed thresholding tolerance decreases, more cells are activated and the overall error magnitude is correspondingly
reduced. Except in the immediate neighborhoods of discontinuities, the numerical errors are effectively controlled
around the prescribed thresholding tolerances. These results verify the threshold-based error-control capability of the
proposed adaptive multiresolution wavelet upwind scheme. 
\begin{figure}[!htbp]
    \centering

    \begin{subfigure}[b]{0.45\textwidth}
        \centering
        \includegraphics[width=\linewidth]{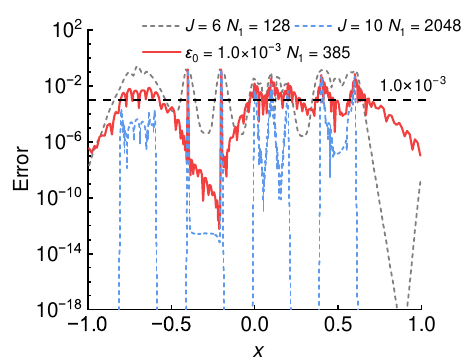}
        \caption{$\epsilon_0=1.0 \times 10^{-3}$.}
        \label{fig:JSepsilonm3}
    \end{subfigure}
    \hfill
    \begin{subfigure}[b]{0.45\textwidth}
        \centering
        \includegraphics[width=\linewidth]{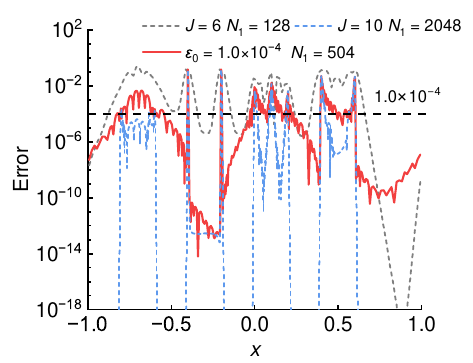}
        \caption{$\epsilon_0=1.0 \times 10^{-4}$.}
        \label{fig:JSepsilonm4}
    \end{subfigure}

    \vspace{0.3cm}

    \begin{subfigure}[b]{0.45\textwidth}
        \centering
        \includegraphics[width=\linewidth]{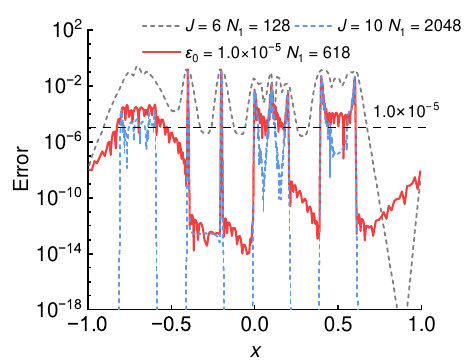}
        \caption{$\epsilon_0=1.0 \times 10^{-5}$.}
        \label{fig:JSepsilonm5}
    \end{subfigure}
    \hfill
    \begin{subfigure}[b]{0.45\textwidth}
        \centering
        \includegraphics[width=\linewidth]{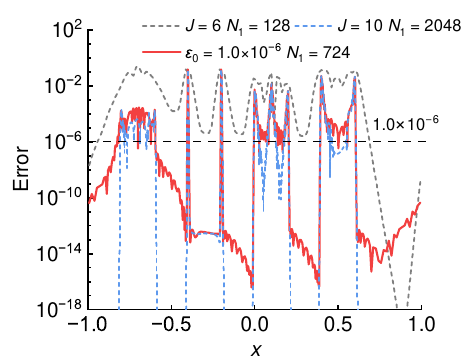}
        \caption{$\epsilon_0=1.0 \times 10^{-6}$.}
        \label{fig:JSepsilonm6}
    \end{subfigure}

    \caption{Numerical solutions at $t=2.0$ of the Jiang and Shu's problem obtained by the proposed adaptive
    $\mathrm{AMAIW_4T_2\mbox{-}BVD}$ scheme with different thresholding parameters, $J_0=6$, and $J_{\max}=10$.}
    \label{fig:JSepsilonsolution}
\end{figure}

We further assess the computational efficiency of the proposed adaptive scheme by analyzing its data compression
performance. An active-cell percentage is defined to measure the degree of data compression as follows:
\begin{equation}
\eta_{\mathrm{act}}=\dfrac{N_{\mathrm{adaptive}}}{N_{\mathrm{finest}}} \times 100\%,
\label{eq:Activepercentage}
\end{equation}
where $N_{\mathrm{adaptive}}$ is the number of adaptive cells, and $N_{\mathrm{finest}}$ is the number of cells at the
finest resolution level. The variation in active-cell percentages of the proposed adaptive scheme with time under
different finest resolution levels and thresholding parameters are shown in Fig.~\ref{fig:JSdiffJmaxPercentage} and
Fig.~\ref{fig:JSdiffepsilonPercentage}, respectively. As shown in Fig.~\ref{fig:JSdiffJmaxPercentage}, the active-cell
percentage  significantly decreases as the finest resolution level increases. In particular, when the finest level is
increased to $J_{\max}=12$, the average active-cell percentage is reduced to about $11.2\%$, indicating that the
proposed adaptive scheme achieves a remarkable data compression capability while retaining the finest-level resolution
near localized features. From Fig.~\ref{fig:JSdiffepsilonPercentage}, the active-cell percentage increases as the
thresholding parameter $\epsilon_0$ decreases. This is reasonable because a smaller threshold imposes a stricter
error-control requirement, and therefore more cells are activated to resolve the localized discontinuities and smooth
wave profiles accurately. Moreover, for each prescribed $J_{\max}$ or $\epsilon_0$, the active-cell percentage remains
nearly constant with only slight oscillations during the time evolution. This behavior is consistent with the physical
property of the linear advection problem, where the solution profile is transported without changing its structure.
These results confirm that the proposed adaptive scheme can effectively capture and track localized solution features
with a sparse representation.
\begin{figure}[!htbp]
    \centering

    \begin{subfigure}[b]{0.45\textwidth}
        \centering
        \includegraphics[width=\linewidth]{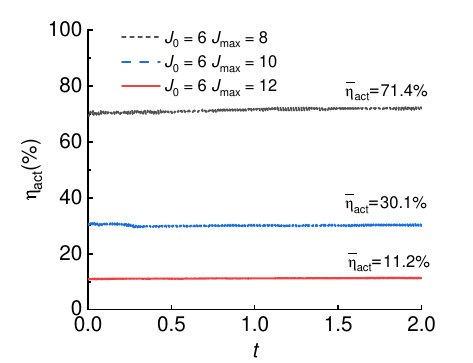}
        \caption{Different finest resolution levels $J_{\max}$, $\epsilon_0=1.0 \times 10^{-5}$.}
        \label{fig:JSdiffJmaxPercentage}
    \end{subfigure}
    \hfill
    \begin{subfigure}[b]{0.45\textwidth}
        \centering
        \includegraphics[width=\linewidth]{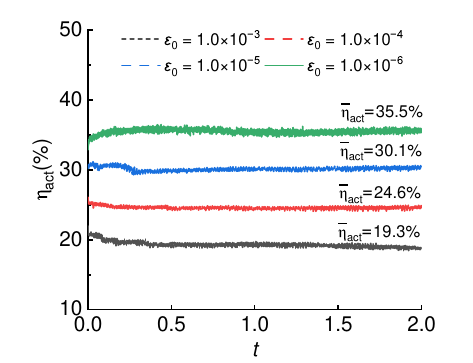}
        \caption{Different thresholding parameters $\epsilon_0$, $J_0=6$, $J_{\max}=10$.}
        \label{fig:JSdiffepsilonPercentage}
    \end{subfigure}

    \caption{Active-cell percentage of the adaptive $\mathrm{AMAIW_4T_2\mbox{-}BVD}$ scheme for the Jiang and Shu's
    problem with different $J_{\max}$ and $\epsilon_0$.}
    \label{fig:JSactivePercentage}
\end{figure}

Finally, we verify the conservation property of the proposed adaptive scheme for solving problems containing
discontinuities. The conservation errors of the adaptive $\mathrm{AMAIW_4T_2\mbox{-}BVD}$ scheme are compared with those
of the corresponding uniform-cell $\mathrm{AIW_4T_2\mbox{-}BVD}$ scheme in Fig.~\ref{fig:JSconservationError}. As shown
in Fig.~\ref{fig:JSdiffJmaxConE}, for different finest resolution levels $J_{\max}$, the conservation errors of the
adaptive scheme agree well with those of the uniform-cell scheme at the corresponding resolution level $J=J_{\max}$.
More importantly, the conservation errors obtained at different resolution levels remain very close to each other and
exhibit nearly the same temporal evolution. The conservation errors of both the adaptive and uniform-cell schemes at
different resolution levels are of the same small magnitude, close to machine precision, indicating that neither the
increase in resolution nor the adaptive cell redistribution introduces noticeable additional conservation errors. As
shown in Fig.~\ref{fig:JSdiffepsilonConE}, the conservation errors obtained with different $\epsilon_0$ show some
visible differences at the initial stage, mainly due to the different adaptive cell distributions generated by different
tolerances. Nevertheless, these differences rapidly diminish as the computation proceeds, and the conservation errors
subsequently remain almost identical to the corresponding uniform-cell result. This indicates that the prescribed
thresholding tolerance has no obvious influence on the long-time conservation behavior, and the threshold-based
adaptation process does not introduce additional conservation errors. Together with the theoretical conservation
analysis presented previously, these numerical results demonstrate the strict conservativity of the proposed adaptive
wavelet upwind scheme for compressible flows involving discontinuities.
\begin{figure}[!htbp]
    \centering

    \begin{subfigure}[b]{0.45\textwidth}
        \centering
        \includegraphics[width=\linewidth]{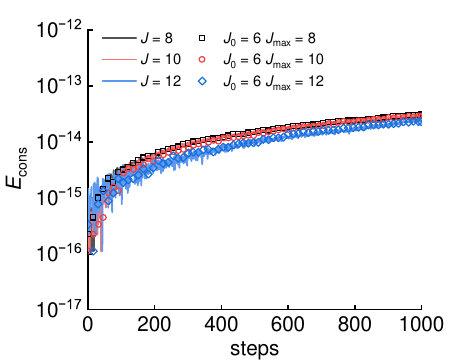}
        \caption{Different finest resolution levels $J_{\max}$, $\epsilon_0=1.0 \times 10^{-5}$.}
        \label{fig:JSdiffJmaxConE}
    \end{subfigure}
    \hfill
    \begin{subfigure}[b]{0.45\textwidth}
        \centering
        \includegraphics[width=\linewidth]{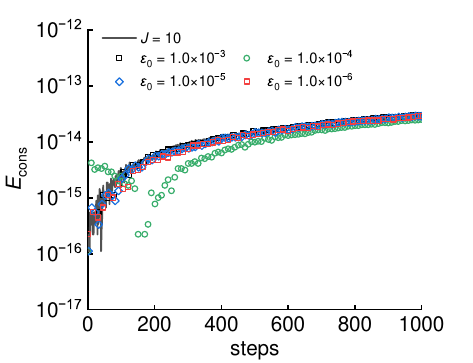}
        \caption{Different thresholding parameters $\epsilon_0$, $J_0=6$, $J_{\max}=10$.}
        \label{fig:JSdiffepsilonConE}
    \end{subfigure}

    \caption{Conservation errors of the $\mathrm{AMAIW_4T_2\mbox{-}BVD}$ scheme for the Jiang and Shu's problem with
    different $J_{\max}$ and $\epsilon_0$.}
    \label{fig:JSconservationError}
\end{figure}
\FloatBarrier

\subsection{One-dimensional nonlinear inviscid Burgers' equation}
Here, the one-dimensional inviscid Burgers' equation is solved with different initial conditions and final times:
\begin{equation}
u_t+\left(\dfrac{u^2}{2}\right)_x=0,\quad 
u(x,0)=u_0(x),
\label{eq:1DBurgersEquation}
\end{equation}
with periodic boundary conditions. The exact solutions of the Burgers' equation is obtained by the semi-analytical
method proposed by Harten et al.~\cite{HARTEN1987}, in which Newton--Raphson iterations are used to solve the
characteristic relation.

\textbf{Case 1.} This case is used to test the order of accuracy of the proposed wavelet upwind scheme for nonlinear
problems. The initial condition is given by
\begin{equation}
u(x,0)=0.5+\sin(\pi x), \quad x \in [0,2].
\label{eq:Case1BurgersInitial}
\end{equation}
The final time is set as $t=0.5/\pi$, which is before the shock formation time $t_s=1/\pi$. 

The numerical results are computed using the uniform-cell AIWU4 and $\mathrm{AIW_4T_2\mbox{-}BVD}$ schemes under a CFL
number of $1/16$. The corresponding numerical errors and orders of accuracy measured in different norms are listed in
Table~\ref{tab:accuracyBurgers}. As the mesh is refined, both the AIWU4 and $\mathrm{AIW_4T_2\mbox{-}BVD}$ schemes
achieve convergence orders close to the theoretical value. This convergence behavior is consistent with that reported
for DG~\cite{Cockburn2003}, MCV~\cite{Ii2009}, and spectral volume~\cite{Wang2002} methods. In addition, the nonlinear
$\mathrm{AIW_4T_2\mbox{-}BVD}$ scheme yields exactly the same numerical errors and orders of accuracy as the linear
AIWU4 scheme. This indicates that, for the smooth solution of the nonlinear Burgers' equation, the BVD reconstruction
also selects the high-order wavelet discretization, thereby preserving the designed accuracy of the underlying linear
scheme.
\begin{table}[!htbp]
\centering
\caption{Numerical errors and orders of accuracy for the one-dimensional Burgers' equation at $t=0.5/\pi$.}
\label{tab:accuracyBurgers}
\begin{tabular*}{0.95\textwidth}{@{\extracolsep{\fill}}cccccc}
\toprule
Schemes & $N_1$ & $L^\infty$ error & $L^\infty$ order & $L^1$ error & $L^1$ order \\
\midrule
\multirow{6}{*}{$\mathrm{AIWU4}$} & 32   & $5.047\times10^{-4}$ & --   & $8.495\times10^{-5}$ & --   \\
& 64   & $4.633\times10^{-5}$ & 3.45 & $6.543\times10^{-6}$ & 3.70 \\
& 128  & $3.467\times10^{-6}$ & 3.74 & $4.881\times10^{-7}$ & 3.74 \\
& 256  & $2.355\times10^{-7}$ & 3.88 & $3.341\times10^{-8}$ & 3.87 \\
& 512  & $1.529\times10^{-8}$ & 3.94 & $2.194\times10^{-9}$ & 3.93 \\
& 1024 & $9.756\times10^{-10}$ & 3.97 & $1.414\times10^{-10}$ & 3.96 \\

\multirow{6}{*}{$\mathrm{AIW_4T_2\mbox{-}BVD}$} & 32   & $5.047\times10^{-4}$ & --   & $8.495\times10^{-5}$ & --   \\
& 64   & $4.633\times10^{-5}$ & 3.45 & $6.543\times10^{-6}$ & 3.70 \\
& 128  & $3.467\times10^{-6}$ & 3.74 & $4.881\times10^{-7}$ & 3.74 \\
& 256  & $2.355\times10^{-7}$ & 3.88 & $3.341\times10^{-8}$ & 3.87 \\
& 512  & $1.529\times10^{-8}$ & 3.94 & $2.194\times10^{-9}$ & 3.93 \\
& 1024 & $9.756\times10^{-10}$ & 3.97 & $1.414\times10^{-10}$ & 3.96 \\
\bottomrule
\end{tabular*}
\end{table}

\textbf{Case 2.} This case was introduced by Sebastian and Shu~\cite{Sebastian2003} to test the capability of numerical
schemes to capture much stronger shocks. Here, we use this problem to further evaluate the discontinuity-capturing
capability, error-control performance, and conservation property of the proposed adaptive scheme. The initial condition
is given by
\begin{equation}
u(x,0)=17.4+13.3\sin(\pi x), \quad x \in [-1,1].
\label{eq:Case2BurgersInitial}
\end{equation}

For this case, the shock formation time is $t_s=1/(13.3\pi)$. The numerical solutions are computed at $t=0.1/\pi$, which
is after shock formation, to examine the shock-capturing capability and adaptive performance of the proposed adaptive
wavelet upwind scheme. In this case, $n_{r0}=6$ is used in the $\mathrm{ACG}\text{-}J$. The numerical results at
$t=0.1/\pi$ obtained by the proposed adaptive $\mathrm{AMAIW_4T_2\mbox{-}BVD}$ scheme with different finest resolution
levels are shown in Fig.~\ref{fig:BurgersdiffJmax}. The proposed adaptive scheme resolves this strong shock problem
stably without spurious oscillations. As the finest resolution level increases, the discontinuity is captured more
sharply and the numerical shock approaches the exact shock location, as further confirmed by the zoomed view near the
shock. Meanwhile, the active-cell number increases from $N_1=90$ for $J_{\max}=6$ to $N_1=198$ for $J_{\max}=10$, which
is much smaller than the number of cells on the corresponding finest uniform mesh. These results demonstrate that the
proposed adaptive multiresolution wavelet upwind scheme can effectively capture strong shocks through sparse adaptive
representations.
\begin{figure}[!htbp]
    \centering

    \begin{subfigure}[b]{0.45\textwidth}
        \centering
        \includegraphics[width=\linewidth]{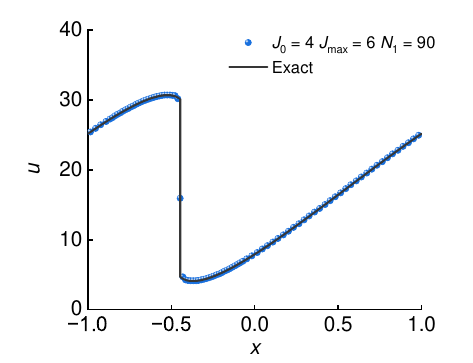}
        \caption{$J_{\max}=6$.}
        \label{fig:BurgersdiffJmax6}
    \end{subfigure}
    \hfill
    \begin{subfigure}[b]{0.45\textwidth}
        \centering
        \includegraphics[width=\linewidth]{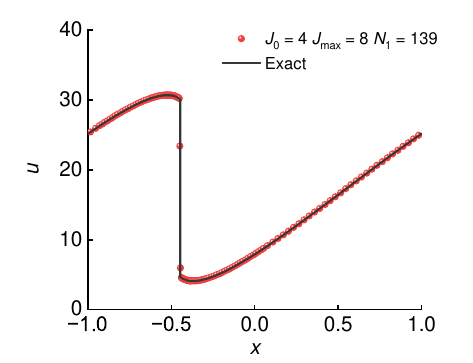}
        \caption{$J_{\max}=8$.}
        \label{fig:BurgersdiffJmax8}
    \end{subfigure}

    \vspace{0.3cm}

    \begin{subfigure}[b]{0.45\textwidth}
        \centering
        \includegraphics[width=\linewidth]{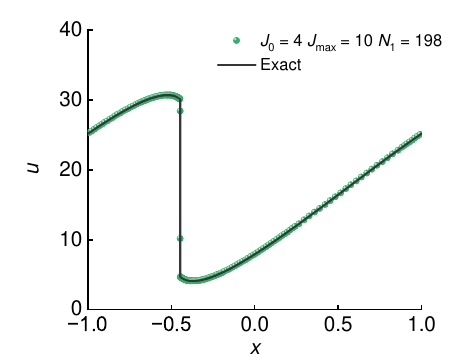}
        \caption{$J_{\max}=10$.}
        \label{fig:BurgersdiffJmax10}
    \end{subfigure}
    \hfill
    \begin{subfigure}[b]{0.45\textwidth}
        \centering
        \includegraphics[width=\linewidth]{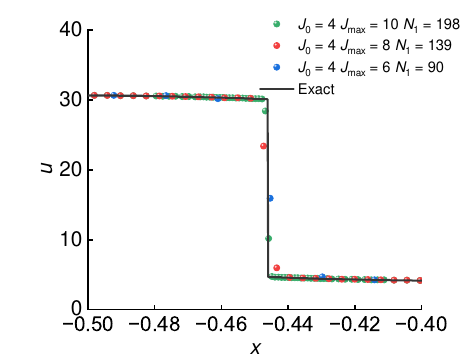}
        \caption{Zoomed view near the shock.}
        \label{fig:BurgersdiffJmaxZoom}
    \end{subfigure}

    \caption{Numerical results at $t=0.1/\pi$ obtained by the proposed adaptive $\mathrm{AMAIW_4T_2\mbox{-}BVD}$ scheme
    for the Burgers' equation with different finest resolution levels, $J_0=4$, and $\epsilon_0=1.0\times10^{-5}$.}
    \label{fig:BurgersdiffJmax}
\end{figure}

The numerical errors at $t=0.1/\pi$ and variations in active-cell percentages with time obtained by the adaptive
$\mathrm{AMAIW_4T_2\mbox{-}BVD}$ scheme under different thresholding tolerances are shown in
Fig.~\ref{fig:Burgerserrorcontrol}. As shown in Fig.~\ref{fig:Burgerserrorcontrol_error}, the errors of the proposed
adaptive scheme generally lie between those of the uniform-cell $\mathrm{AIW_4T_2\mbox{-}BVD}$ schemes at $J=J_0$ and
$J=J_{\max}$. Except in the immediate neighborhood of the shock, the numerical errors are effectively controlled around
the prescribed thresholding tolerances. Figure~\ref{fig:Burgerserrorcontrol_activepercentage} shows that smaller
thresholding tolerances lead to larger active-cell percentages, since more cells are retained in regions with strong
local variations. The active-cell percentage reaches a local peak around the shock formation time, because the rapidly
increasing solution gradient produces large wavelet coefficients over a relatively wide region. After the shock is
formed, the discontinuity becomes highly localized and the solution away from the shock becomes smoother, so the region
requiring finest-level resolution becomes narrower and the active-cell percentage decreases. These results demonstrate
the effectiveness of the threshold-based error-control strategy and the dynamic adaptivity procedure for nonlinear
problems.
\begin{figure}[!htbp]
    \centering

    \begin{subfigure}[b]{0.45\textwidth}
        \centering
        \includegraphics[width=\linewidth]{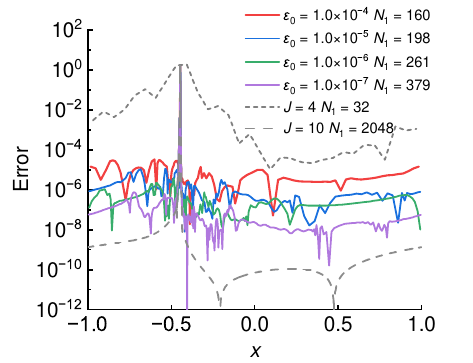}
        \caption{Numerical errors.}
        \label{fig:Burgerserrorcontrol_error}
    \end{subfigure}
    \hfill
    \begin{subfigure}[b]{0.45\textwidth}
        \centering
        \includegraphics[width=\linewidth]{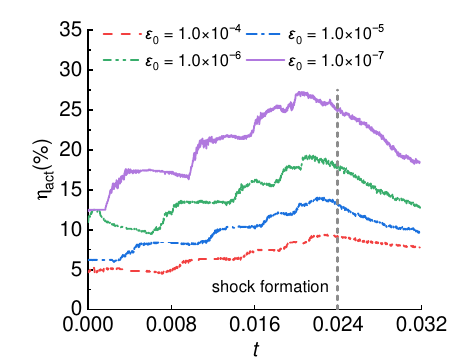}
        \caption{Active-cell percentage.}
        \label{fig:Burgerserrorcontrol_activepercentage}
    \end{subfigure}
    \caption{Numerical errors at $t=0.1/\pi$ and variations in active-cell percentages with time obtained by the
adaptive $\mathrm{AMAIW_4T_2\mbox{-}BVD}$ scheme for the Burgers' equation with different thresholding parameters,
$J_0=4$, and $J_{\max}=10$.}
    \label{fig:Burgerserrorcontrol}
\end{figure}

Next, we conduct a long-time simulation up to $t=24$ to investigate the stability, data-compression capability, and
conservation property of the proposed adaptive $\mathrm{AMAIW_4T_2\mbox{-}BVD}$ scheme. The numerical solutions at
$t=24$ with different finest resolution levels and the corresponding temporal evolutions of the active-cell percentages
are presented in Fig.~\ref{fig:BurgersdiffJmax24}. It can be observed that the proposed adaptive wavelet upwind scheme
can stably solve the nonlinear Burgers' equation with a strong shock during the long-time evolution. The proposed
adaptive scheme accurately captures the shock wave at its correct position without spurious oscillations. As the finest
resolution level increases, the discontinuity is captured more sharply, and the numerical solution obtained with
$J_{\max}=10$ agrees very well with the exact solution. Figure~\ref{fig:Burgersactive24} shows that the active-cell
percentage reaches a local peak around the shock formation time $t_s=1/(13.3\pi)\approx 0.02394$, and then decreases to
a relatively stable value during the subsequent long-time evolution. Before and around shock formation, the solution
gradient increases rapidly over a relatively wide region, leading to large wavelet coefficients and hence more activated
cells. After the shock is formed, the discontinuity becomes localized, while the smooth component away from the shock is
gradually stretched and becomes relatively smoother. During the subsequent long-time evolution, the local regularity of
the smooth component remains nearly unchanged and the shock strength gradually decreases. This leads to the decrease and
subsequent stabilization of the active-cell percentage. We can see that the data-compression advantage becomes more
pronounced at higher finest resolution levels. These results demonstrate that the proposed adaptive multiresolution
wavelet upwind scheme maintains good stability and accuracy for long-time shock evolution, while achieving effective
data compression through adaptive cell redistribution.
\begin{figure}
    \centering

    \begin{subfigure}[b]{0.45\textwidth}
        \centering
        \includegraphics[width=\linewidth]{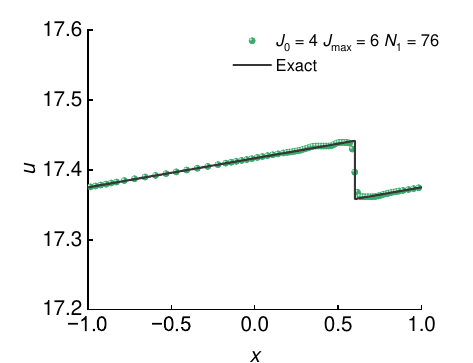}
        \caption{$J_{\max}=6$.}
        \label{fig:BurgersdiffJmax6_24}
    \end{subfigure}
    \hfill
    \begin{subfigure}[b]{0.45\textwidth}
        \centering
        \includegraphics[width=\linewidth]{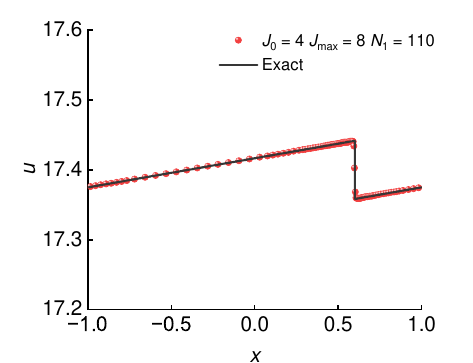}
        \caption{$J_{\max}=8$.}
        \label{fig:BurgersdiffJmax8_24}
    \end{subfigure}

    \vspace{0.3cm}

    \begin{subfigure}[b]{0.45\textwidth}
        \centering
        \includegraphics[width=\linewidth]{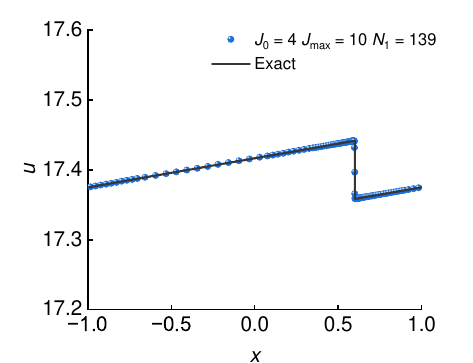}
        \caption{$J_{\max}=10$.}
        \label{fig:BurgersdiffJmax10_24}
    \end{subfigure}
    \hfill
    \begin{subfigure}[b]{0.45\textwidth}
        \centering
        \includegraphics[width=\linewidth]{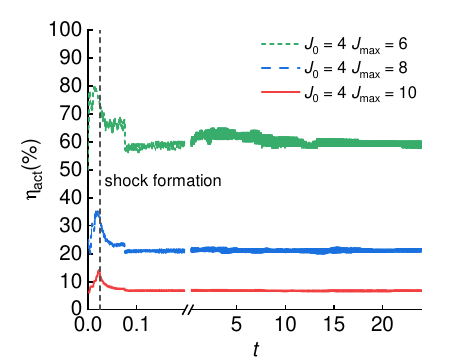}
        \caption{Active-cell percentage.}
        \label{fig:Burgersactive24}
    \end{subfigure}

    \caption{Numerical solutions at $t=24$ for different finest resolution levels and the corresponding temporal
evolutions of the active-cell percentages obtained by the adaptive $\mathrm{AMAIW_4T_2\mbox{-}BVD}$ scheme for the
Burgers' equation with $\epsilon_0=1.0\times10^{-5}$.}
    \label{fig:BurgersdiffJmax24}
\end{figure}

Finally, the conservation errors up to $4.0 \times10^4$ time steps obtained by the proposed adaptive wavelet upwind
scheme are compared with those of the corresponding uniform-cell scheme in Fig.~\ref{fig:Burgersconerror24}. For
different finest resolution levels, the conservation errors of the adaptive scheme almost overlap with those of the
corresponding uniform-cell scheme. Moreover, the conservation errors at different resolution levels are nearly identical
to each other and show the same temporal evolution. This indicates that, in the proposed conservative adaptive scheme,
neither the variation in resolution levels nor the adaptive cell redistribution process introduces noticeable additional
conservation errors during long-time computations. Even after such a large number of time steps, the conservation errors
remain on the order of $10^{-11}$. In contrast, the conservation errors reported for the nonconservative schemes in
Ref.~\cite{Sebastian2003} are several orders of magnitude larger, ranging from about $10^{-4}$ to $10^{-2}$ depending on
the scheme and resolution. These results verify the conservation property of the proposed adaptive scheme for nonlinear
problems with strong discontinuities.
\begin{figure}
    \centering
    \includegraphics[width=0.45\textwidth]{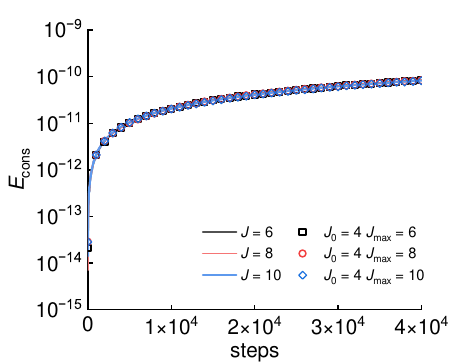}
    \caption{Conservation errors obtained by the adaptive $\mathrm{AMAIW_4T_2\mbox{-}BVD}$ scheme and the corresponding
uniform-cell scheme for the Burgers' equation with different finest resolution levels up to $4.0 \times10^4$ time steps,
$\epsilon_0=1.0\times10^{-5}$.}
    \label{fig:Burgersconerror24}
\end{figure}
\FloatBarrier

\subsection{One-dimensional Euler equations}
The one-dimensional Euler equations of gas dynamics for a polytropic gas in the conservative form are written as 
\begin{equation}
\begin{aligned}
&\mathbf{U}_t+\mathbf{F}(\mathbf{U})_x =0,\\
&\mathbf{U} = (\rho,\rho u,E)^T,\\
&\mathbf{F}(\mathbf{U}) = \left(\rho u,\rho u^2 + p,u(E+p)\right)^T .
\end{aligned}
\end{equation}
where $\rho, u, p$, and $E$ are the density, velocity, pressure and total energy, respectively. For the perfect gas, the
total energy can be determined by the following equation:
\begin{equation}
E=\dfrac{p}{\gamma-1}+\dfrac{1}{2} \rho u^2,
\label{eq:state_equation}
\end{equation}
where $\gamma=1.4$.  
\subsubsection{Advection of density perturbation}
To verify the order of accuracy of the present average-interpolating wavelet upwind schemes, we consider the advection
of density perturbation problem. The initial condition is given by $\rho(x,0)=1+0.2\sin(\pi x)$, $u(x,0)=1$, and
$p(x,0)=1$ on the computational domain $[0,2]$ with periodic boundary conditions.

The numerical solutions at $t=2.0$ are obtained using the uniform-cell AIWU4 and $\mathrm{AIW_4T_2\mbox{-}BVD}$ schemes
with a CFL number of $1/16$. The corresponding numerical errors and convergence orders in different norms are reported
in Table~\ref{tab:accuracy_density_perturbation}. It can be observed that both wavelet schemes achieve the expected
fourth-order accuracy. Moreover, the $\mathrm{AIW_4T_2\mbox{-}BVD}$ scheme gives identical numerical errors and
convergence orders to those of the AIWU4 scheme. This indicates that, for smooth solutions, the BVD algorithm
consistently selects the high-order wavelet candidate, thereby preserving the designed high-order accuracy.

Next, we evaluate the error-control capability of the proposed adaptive multiresolution average-interpolating wavelet
upwind scheme for nonlinear systems. The numerical solutions at $t=2.0$ are obtained using the adaptive
$\mathrm{AMAIW_4T_2\mbox{-}BVD}$ scheme with various thresholding parameters. The corresponding numbers of active leaf
cells and numerical errors are listed in Table~\ref{tab:accuracy_epsilon_euler}. The numerical errors are effectively
controlled in accordance with the prescribed tolerances. These results demonstrate the feasibility of the proposed
adaptive error-control strategy based on wavelet multiresolution analysis for solving nonlinear systems.
\begin{table}[!htbp]
\centering
\caption{Numerical errors and orders of accuracy for the one-dimensional Euler equations.}
\label{tab:accuracy_density_perturbation}
\begin{tabular*}{0.95\textwidth}{@{\extracolsep{\fill}}cccccc}
\toprule
Schemes & $N_1$ & $L^\infty$ error & $L^\infty$ order & $L^1$ error & $L^1$ order \\
\midrule
\multirow{6}{*}{$\mathrm{AIWU4}$} & 16  & $6.030\times10^{-4}$ & --   & $7.631\times10^{-4}$ & --   \\
& 32  & $3.697\times10^{-5}$ & 4.03 & $4.697\times10^{-5}$ & 4.02 \\
& 64  & $2.297\times10^{-6}$ & 4.01 & $2.923\times10^{-6}$ & 4.01 \\
& 128 & $1.434\times10^{-7}$ & 4.00 & $1.825\times10^{-7}$ & 4.00 \\
& 256 & $8.957\times10^{-9}$ & 4.00 & $1.140\times10^{-8}$ & 4.00 \\
& 512 & $5.609\times10^{-10}$ & 4.00 & $7.127\times10^{-10}$ & 4.00 \\
\multirow{6}{*}{$\mathrm{AIW_4T_2\mbox{-}BVD}$} & 16  & $6.030\times10^{-4}$ & --   & $7.631\times10^{-4}$ & --   \\
& 32  & $3.697\times10^{-5}$ & 4.03 & $4.697\times10^{-5}$ & 4.02 \\
& 64  & $2.297\times10^{-6}$ & 4.01 & $2.923\times10^{-6}$ & 4.01 \\
& 128 & $1.434\times10^{-7}$ & 4.00 & $1.825\times10^{-7}$ & 4.00 \\
& 256 & $8.957\times10^{-9}$ & 4.00 & $1.140\times10^{-8}$ & 4.00 \\
& 512 & $5.609\times10^{-10}$ & 4.00 & $7.127\times10^{-10}$ & 4.00 \\
\bottomrule
\end{tabular*}
\end{table}

\begin{table}[!htbp]
\centering
\caption{Error-control tests with $J_0=4$, $J_{\max}=8$ under different thresholding parameters for the advection of
density perturbation of the one-dimensional Euler equations.}
\label{tab:accuracy_epsilon_euler}
\begin{tabular*}{0.95\textwidth}{@{\extracolsep{\fill}}cccc}
\toprule
$\epsilon_0$ & $N_1$ & $L^\infty$ error & $L^1$ error \\
\midrule
$1.000\times10^{-5}$  & 32  & $3.696\times10^{-5}$ & $4.697\times10^{-5}$ \\
$1.000\times10^{-6}$  & 64  & $2.297\times10^{-6}$ & $2.923\times10^{-6}$ \\
$1.000\times10^{-7}$  & 128 & $5.424\times10^{-7}$ & $2.939\times10^{-7}$ \\
$1.000\times10^{-8}$  & 128 & $1.434\times10^{-7}$ & $1.825\times10^{-7}$ \\
$1.000\times10^{-9}$  & 256 & $1.835\times10^{-8}$ & $1.394\times10^{-8}$ \\
$1.000\times10^{-10}$ & 512 & $2.814\times10^{-9}$ & $1.719\times10^{-9}$ \\
\bottomrule
\end{tabular*}
\end{table}

\subsubsection{Sod's problem}
This Riemann problem is a well-known benchmark test proposed by Sod~\cite{Sod1978} to test the performance of numerical
schemes in solving Euler equations, which contains different types of discontinuities, such as shock wave and contact
discontinuity. The initial condition of the Sod's problem is expressed as
\begin{equation}
(\rho,u,p)=
\begin{cases}
(1,0,1), & 0 \leq x \leq 0.5,\\
(0.125,0,0.1), & 0.5 < x \leq 1 .
\end{cases}
\end{equation}
The final simulation time is $t=0.2$. 

We first solve Sod's problem using the adaptive $\mathrm{AMAIW_4T_2\mbox{-}BVD}$ scheme and compare the results with
those obtained by the corresponding uniform-cell $\mathrm{AIW_4T_2\mbox{-}BVD}$ scheme at $J=J_{\max}$. The computed
density, velocity, and pressure are compared in Fig.~\ref{fig:sodcompareJ10}. To further illustrate the resolution
improvement introduced by the adaptive refinement, the coarse uniform-cell result of the $\mathrm{AIW_4T_2\mbox{-}BVD}$
scheme at $J=8$ is also included in the zoomed density profile near the discontinuities. It can be observed that the
numerical solutions obtained by the adaptive scheme are in excellent agreement with those of the corresponding finest
uniform-cell scheme at $J=J_{\max}$. The shock wave, contact discontinuity, and rarefaction wave are sharply resolved
without noticeable spurious oscillations. As shown in Fig.~\ref{fig:sodcompareJ10rhozoomin}, the coarse uniform-cell
result at $J=8$ suffers from visible numerical smearing near the discontinuities, while the adaptive scheme locally
refines the cells in these regions and yields much sharper transitions. The adaptive result almost overlaps with the
finest uniform-cell result at $J=10$, demonstrating that the proposed adaptive scheme can achieve the local resolution
of the finest uniform-cell computation without applying uniform refinement to the entire domain. These results verify
that the proposed adaptive scheme retains the shock-capturing capability of the finest uniform-cell scheme and that the
BVD reconstruction on the finest-level leaf cells is effective.
\begin{figure}
    \centering

    \begin{subfigure}[b]{0.45\textwidth}
        \centering
        \includegraphics[width=\linewidth]{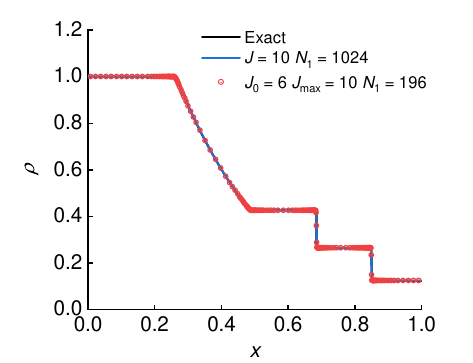}
        \caption{Density.}
        \label{fig:sodcompareJ10rho}
    \end{subfigure}
    \hfill
    \begin{subfigure}[b]{0.45\textwidth}
        \centering
        \includegraphics[width=\linewidth]{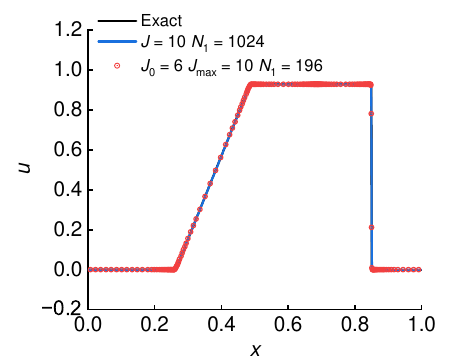}
        \caption{Velocity.}
        \label{fig:sodcompareJ10u}
    \end{subfigure}

    \vspace{0.3cm}

    \begin{subfigure}[b]{0.45\textwidth}
        \centering
        \includegraphics[width=\linewidth]{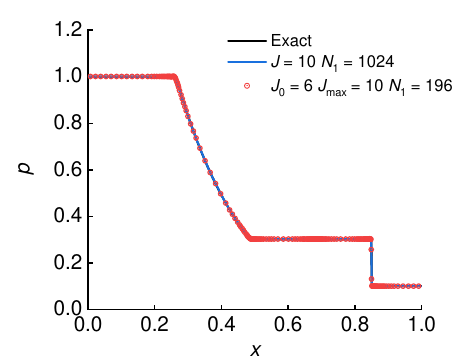}
        \caption{Pressure.}
        \label{fig:sodcompareJ10p}
    \end{subfigure}
    \hfill
    \begin{subfigure}[b]{0.45\textwidth}
        \centering
        \includegraphics[width=\linewidth]{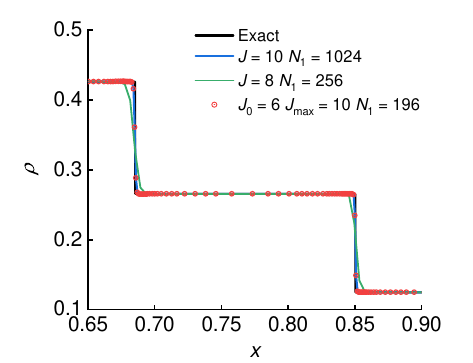}
        \caption{Zoomed density profile near the discontinuities.}
        \label{fig:sodcompareJ10rhozoomin}
    \end{subfigure}
    
    \caption{Comparison of the density, velocity, pressure, and zoomed density profile near the discontinuities at
    $t=0.2$ obtained by the $\mathrm{AMAIW_4T_2\mbox{-}BVD}$ scheme ($\epsilon_0=1.0 \times 10^{-4}$), and the
    $\mathrm{AIW_4T_2\mbox{-}BVD}$ scheme for Sod's problem.}
    \label{fig:sodcompareJ10}
\end{figure}

Next, the numerical results at $t=0.2$ are computed using the proposed adaptive wavelet upwind scheme with different
finest resolution levels. The corresponding density profiles are shown in
Figs.~\ref{fig:soddiffJmax8}--\ref{fig:soddiffJmax12}, and a zoomed-in comparison near the contact discontinuity is
presented in Fig.~\ref{fig:soddiffJmaxZoom}. The proposed adaptive $\mathrm{AMAIW_4T_2\mbox{-}BVD}$ scheme sharply
captures the shock wave and contact discontinuity without visible numerical oscillations for all considered values of
$J_{\max}$. As $J_{\max}$ increases, the computed contact discontinuity gradually approaches the exact solution, as
shown in Fig.~\ref{fig:soddiffJmaxZoom}. These results demonstrate that the proposed adaptive multiresolution wavelet
upwind scheme can accurately and efficiently capture discontinuities in nonlinear Euler systems.
\begin{figure}
    \centering

    \begin{subfigure}[b]{0.45\textwidth}
        \centering
        \includegraphics[width=\linewidth]{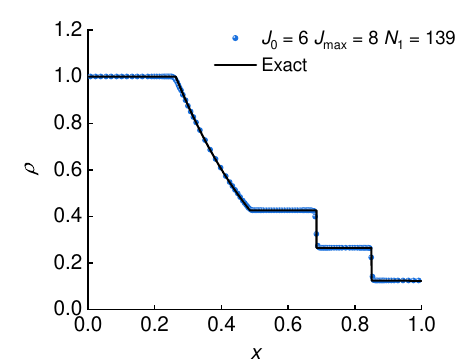}
        \caption{$J_{\max}=8$.}
        \label{fig:soddiffJmax8}
    \end{subfigure}
    \hfill
    \begin{subfigure}[b]{0.45\textwidth}
        \centering
        \includegraphics[width=\linewidth]{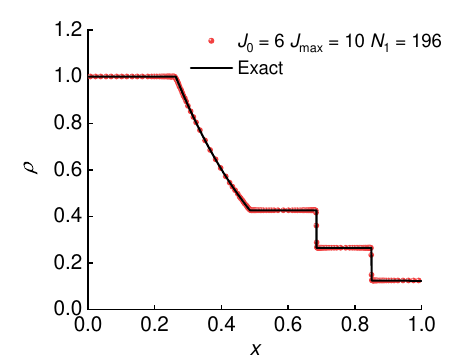}
        \caption{$J_{\max}=10$.}
        \label{fig:soddiffJmax10}
    \end{subfigure}

    \vspace{0.3cm}

    \begin{subfigure}[b]{0.45\textwidth}
        \centering
        \includegraphics[width=\linewidth]{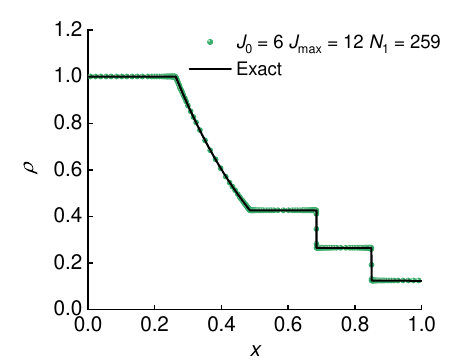}
        \caption{$J_{\max}=12$.}
        \label{fig:soddiffJmax12}
    \end{subfigure}
    \hfill
    \begin{subfigure}[b]{0.45\textwidth}
        \centering
        \includegraphics[width=\linewidth]{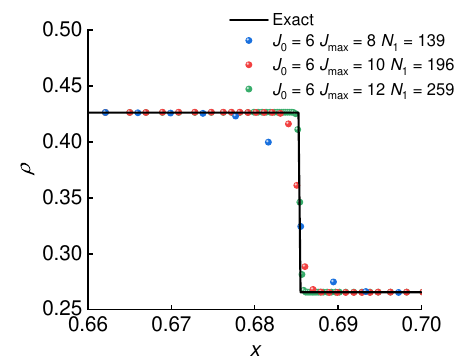}
        \caption{Zoomed-in view of the contact discontinuity.}
        \label{fig:soddiffJmaxZoom}
    \end{subfigure}
    
    \caption{Numerical solutions at $t=0.2$ of Sod's problem obtained by the adaptive $\mathrm{AMAIW_4T_2\mbox{-}BVD}$
    scheme with different finest resolution levels and $\epsilon_0=1.0 \times 10^{-4}$. The last panel shows a zoomed-in
    view near the contact discontinuity.}
    \label{fig:soddiffJmax}
\end{figure}

To evaluate the adaptive performance of the proposed adaptive multiresolution wavelet upwind scheme, the temporal
evolutions of the active-cell percentages are shown in Fig.~\ref{fig:soddiffJmaxpercentage}. The active leaf cell
distribution for $J_{\max}=12$ at $t=0.2$ is also presented in Fig.~\ref{fig:sodJm12activecell}. It can be observed that
the active-cell percentage increases during the early stage of the wave evolution and then reaches a nearly steady value
after the shock wave, contact discontinuity, and rarefaction wave are formed. The active-cell percentage decreases as
the finest resolution level increases. In particular, for $J_{\max}=12$, only about $6\%$ of the corresponding uniform
cells are activated. As shown in Fig.~\ref{fig:sodJm12activecell}, the finest-level leaf cells are mainly concentrated
in the vicinity of the strong discontinuities to resolve discontinuities with sharp transitions. In contrast, relatively
lower-level cells are sufficient to resolve the rarefaction wave. These results indicate that the proposed adaptive
scheme can efficiently identify, locate, and capture different flow features in the nonlinear Euler system through a
sparse representation of the solution.
\begin{figure}
    \centering

    \begin{subfigure}[b]{0.45\textwidth}
        \centering
        \includegraphics[width=\linewidth]{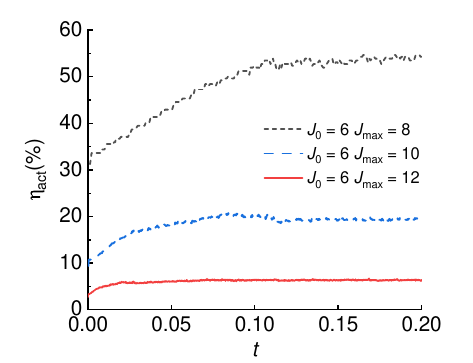}
        \caption{Active-cell percentage.}
        \label{fig:soddiffJmaxpercentage}
    \end{subfigure}
    \hfill
    \begin{subfigure}[b]{0.45\textwidth}
        \centering
        \includegraphics[width=\linewidth]{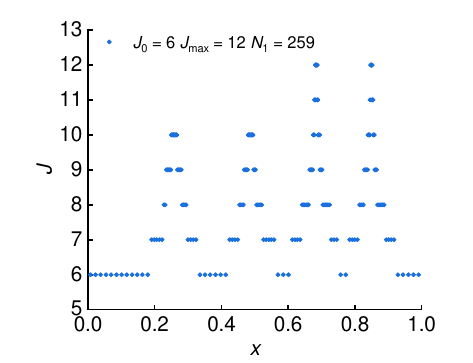}
        \caption{Active leaf cell distribution for $J_{\max}=12$ at $t=0.2$.}
        \label{fig:sodJm12activecell}
    \end{subfigure}
    
    \caption{Temporal evolutions of the active-cell percentages for different finest resolution levels and the active
    leaf cell distribution at $t=0.2$ with $J_{\max}=12$ obtained by the adaptive $\mathrm{AMAIW_4T_2\mbox{-}BVD}$
    scheme for Sod's problem with $\epsilon_0=1.0\times10^{-4}$.}
    \label{fig:sodactivecell}
\end{figure}

Finally, we verify the conservation property of the proposed adaptive average-interpolating wavelet upwind scheme for
the nonlinear Euler equations. Figure~\ref{fig:sodconservationError} compares the conservation errors of mass, momentum,
and total energy up to 1000 time steps obtained by the adaptive $\mathrm{AMAIW_4T_2\mbox{-}BVD}$ scheme with those of
the uniform-cell $\mathrm{AIW_4T_2\mbox{-}BVD}$ scheme at $J=J_{\max}$. It can be observed that the conservation errors
of mass, momentum, and total energy produced by the adaptive scheme are comparable to those of the corresponding
uniform-cell scheme, remaining below $10^{-13}$ and thus close to machine precision. These results confirm the strict
conservation property of the proposed adaptive wavelet upwind scheme for nonlinear Euler systems.
\begin{figure}
    \centering

    \begin{subfigure}[b]{0.45\textwidth}
        \centering
        \includegraphics[width=\linewidth]{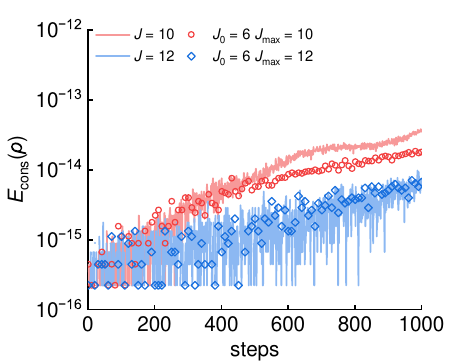}
        \caption{Mass.}
        \label{fig:sodconEmass}
    \end{subfigure}
    \hfill
    \begin{subfigure}[b]{0.45\textwidth}
        \centering
        \includegraphics[width=\linewidth]{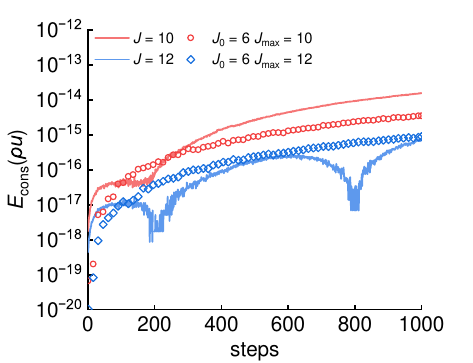}
        \caption{Momentum.}
        \label{fig:sodconEmomentum}
    \end{subfigure}

    \vspace{0.3cm}

    \begin{subfigure}[b]{0.45\textwidth}
        \centering
        \includegraphics[width=\linewidth]{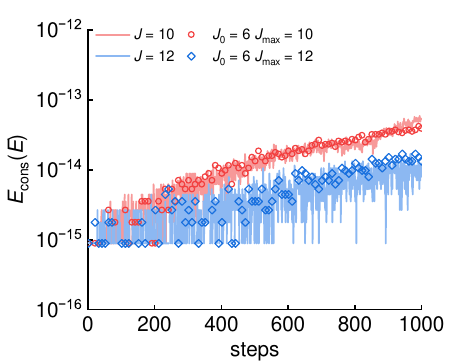}
        \caption{Total energy.}
        \label{fig:sodconEenergy}
    \end{subfigure}
    
    \caption{Conservation errors of mass, momentum, and total energy up to 1000 time steps for Sod's problem. The
adaptive $\mathrm{AMAIW_4T_2\mbox{-}BVD}$ scheme with $\epsilon_0=1.0\times10^{-4}$, and different finest resolution
levels is compared with the corresponding uniform-cell $\mathrm{AIW_4T_2\mbox{-}BVD}$ scheme at $J=J_{\max}$.}
    \label{fig:sodconservationError}
\end{figure}
\FloatBarrier

\subsubsection{Lax's problem}
To further evaluate the ability of the proposed adaptive wavelet upwind scheme to capture stronger shock, the Lax's
problem is solved with the following initial condition:
\begin{equation}
(\rho_0,u_0,p_0)=
\begin{cases}
(0.445,0.698,3.528), & 0 \leq x \leq 0.5,\\
(0.5,0.0,0.571), & 0.5 < x \leq 1 .
\end{cases}
\end{equation}

The numerical solutions at $t=0.13$ are computed using the proposed adaptive $\mathrm{AMAIW_4T_2\mbox{-}BVD}$ scheme
with different finest resolution levels. For comparison, the WENO5-Z scheme~\cite{Borges2008,Don2013} is also employed,
where the number of uniform cells is adjusted to obtain a shock resolution comparable to that of the proposed adaptive
wavelet upwind scheme. The computed density profiles and zoomed-in views near the discontinuities are shown in
Fig.~\ref{fig:laxdiffJmax}. It can be observed that the proposed adaptive $\mathrm{AMAIW_4T_2\mbox{-}BVD}$ scheme
produces satisfactory non-oscillatory solutions for all considered values of $J_{\max}$. As shown in the zoomed-in
views, the contact discontinuity and shock wave are resolved more sharply as the finest resolution level increases, and
the computed discontinuities gradually approach the exact solution without noticeable overshoots or spurious
oscillations.

Compared with the WENO5-Z scheme, the proposed adaptive wavelet scheme requires significantly fewer active leaf cells to
achieve comparable shock resolution. Specifically, the adaptive scheme uses only $N_1=121$, $198$, and $256$ active leaf
cells for $J_{\max}=8$, $10$, and $12$, respectively, whereas the WENO5-Z scheme uses $400$, $2000$, and $4000$ uniform
cells in the corresponding comparisons. Under comparable shock resolution, the adaptive wavelet upwind scheme captures
sharper contact discontinuities than the WENO5-Z scheme, demonstrating its strong capability in resolving contact
discontinuities.
\begin{figure}[!htbp]
    \centering

    \begin{subfigure}[b]{0.45\textwidth}
        \centering
        \includegraphics[width=\linewidth]{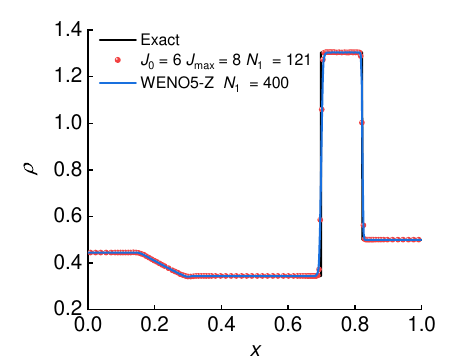}
        \caption{$J_{\max}=8$.}
        \label{fig:laxdiffJmax8}
    \end{subfigure}
    \hfill
    \begin{subfigure}[b]{0.45\textwidth}
        \centering
        \includegraphics[width=\linewidth]{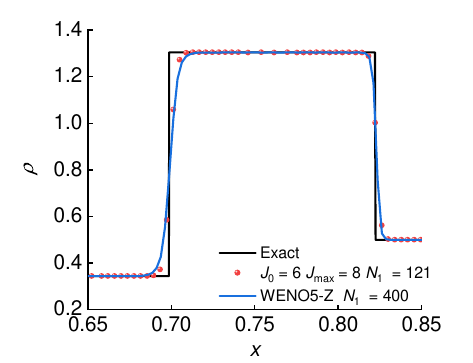}
        \caption{Zoomed-in view for $J_{\max}=8$.}
        \label{fig:laxdiffJmax8local}
    \end{subfigure}

    \vspace{0.3cm}

    \begin{subfigure}[b]{0.45\textwidth}
        \centering
        \includegraphics[width=\linewidth]{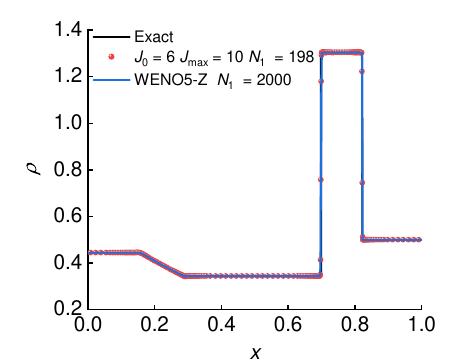}
        \caption{$J_{\max}=10$.}
        \label{fig:laxdiffJmax10}
    \end{subfigure}
    \hfill
    \begin{subfigure}[b]{0.45\textwidth}
        \centering
        \includegraphics[width=\linewidth]{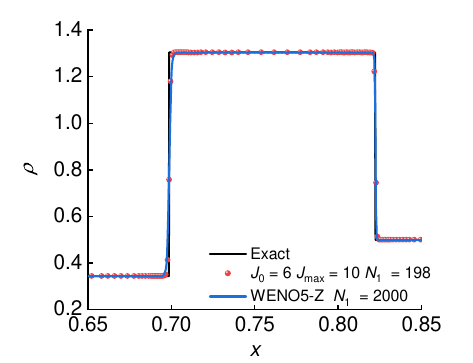}
        \caption{Zoomed-in view for $J_{\max}=10$.}
        \label{fig:laxdiffJmax10local}
    \end{subfigure}

    \vspace{0.3cm}

    \begin{subfigure}[b]{0.45\textwidth}
        \centering
        \includegraphics[width=\linewidth]{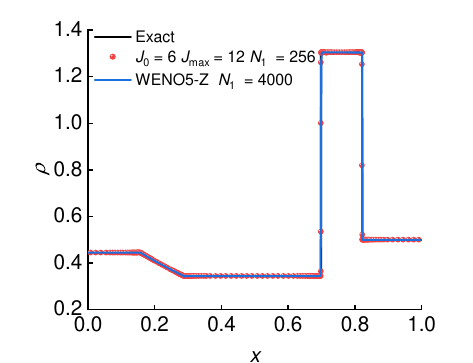}
        \caption{$J_{\max}=12$.}
        \label{fig:laxdiffJmax12}
    \end{subfigure}
    \hfill
    \begin{subfigure}[b]{0.45\textwidth}
        \centering
        \includegraphics[width=\linewidth]{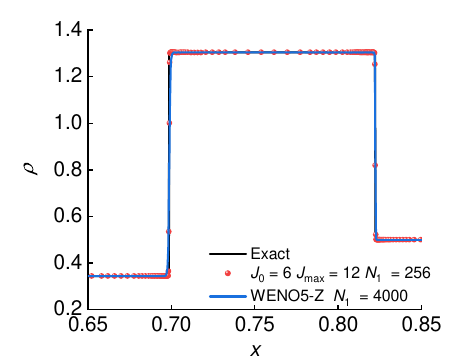}
        \caption{Zoomed-in view for $J_{\max}=12$.}
        \label{fig:laxdiffJmax12Local}
    \end{subfigure}
    
    \caption{Numerical solutions at $t=0.13$ of the Lax's problem obtained by the adaptive
    $\mathrm{AMAIW_4T_2\mbox{-}BVD}$ scheme with $\epsilon_0=1.0\times10^{-4}$ and different finest resolution levels,
    and WENO5-Z scheme. The right panels show the corresponding zoomed-in views near the strong discontinuities.}
\label{fig:laxdiffJmax}
\end{figure}

To illustrate how the multiresolution cells are distributed for different localized features, the active leaf cell
distributions of the proposed adaptive $\mathrm{AMAIW_4T_2\mbox{-}BVD}$ scheme at $t=0.13$ are shown in
Fig.~\ref{fig:laxcelldistribution}. The high-resolution leaf cells are mainly concentrated near the shock wave and the
contact discontinuity to capture strong jump discontinuities, while relatively lower-level cells are activated near the
head and tail of the rarefaction wave. In smooth regions, only coarse cells are retained. These results indicate that
the proposed adaptive scheme can effectively identify, locate, and resolve different flow features through a sparse cell
representation.
\begin{figure}[!htbp]
    \centering

    \begin{subfigure}[b]{0.45\textwidth}
        \centering
        \includegraphics[width=\linewidth]{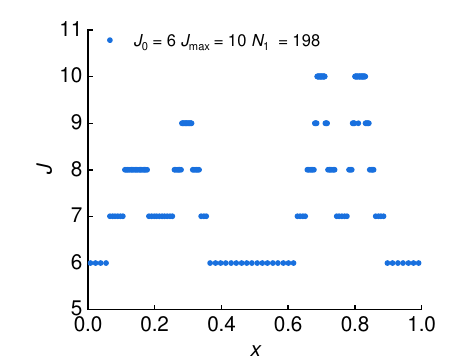}
        \caption{$J_{\max}=10$.}
        \label{fig:laxcelldistributionJmax10}
    \end{subfigure}
    \hfill
    \begin{subfigure}[b]{0.45\textwidth}
        \centering
        \includegraphics[width=\linewidth]{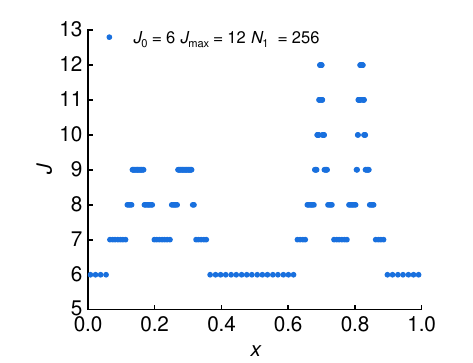}
        \caption{$J_{\max}=12$.}
        \label{fig:laxcelldistributionJmax12}
    \end{subfigure}

    \caption{Active leaf cell distributions at $t=0.13$ obtained by the adaptive $\mathrm{AMAIW_4T_2\mbox{-}BVD}$ scheme
    for the Lax's problem with $\epsilon_0=1.0\times10^{-4}$, and different finest resolution levels.}
    \label{fig:laxcelldistribution}
\end{figure}

To evaluate the computational efficiency of the proposed adaptive wavelet upwind scheme, we compare the CPU time of the
adaptive $\mathrm{AMAIW_4T_2\mbox{-}BVD}$ scheme with those of the corresponding uniform $\mathrm{AIW_4T_2\mbox{-}BVD}$
scheme and WENO5-Z for the Lax problem, as shown in Table~\ref{tab:cpu_time_lax}. As $J_{\max}$ increases, the
active-cell percentage $\overline{\eta}_{\mathrm{act}}$ decreases from $45.5\%$ to $5.9\%$, indicating that the adaptive
multiresolution representation becomes increasingly sparse at higher finest resolution levels. Compared with the uniform
$\mathrm{AIW_4T_2\mbox{-}BVD}$ computation at the same finest resolution level, the adaptive scheme reduces the CPU time
by factors of $1.697$, $3.903$, and $8.054$, respectively. Under comparable shock resolution, the CPU time ratios
$T_{\mathrm{WENO}}/T_{\mathrm{adap}}$ are $8.718$, $25.496$, and $12.739$, respectively. These results demonstrate that
the proposed adaptive wavelet scheme effectively reduces both the computational cost and the number of active degrees of
freedom by exploiting the sparsity of the multiresolution representation, while maintaining high-resolution
shock-capturing capability.
\begin{table}[!htbp]
\centering
\caption{CPU time comparison among the adaptive $\mathrm{AMAIW_4T_2\mbox{-}BVD}$ scheme, the corresponding uniform
$\mathrm{AIW_4T_2\mbox{-}BVD}$ scheme, and WENO5--Z scheme for the Lax's problem.}
\label{tab:cpu_time_lax}
\begin{tabular*}{0.98\textwidth}{@{\extracolsep{\fill}}ccccccc}
\toprule
$(J_0,J_{\max})$ & $\overline{\eta}_{\mathrm{act}}$ & $T_{\mathrm{adap}}/s$ & $T_{\mathrm{unif}}/s$ &
$T_{\mathrm{unif}}/T_{\mathrm{adap}}$ & $T_{\mathrm{WENO}}/s$ & $T_{\mathrm{WENO}}/T_{\mathrm{adap}}$ \\
\midrule
$(6,8)$ & 0.455 & 0.039 & $0.066\;(N_1=256)$ & 1.697 & $0.340\;(N_1=400)$ & 8.718 \\
$(6,10)$ & 0.173 & 0.278 & $1.084\;(N_1=1024)$ & 3.903 & $7.088\;(N_1=2000)$ & 25.496 \\
$(6,12)$ & 0.059 & 2.161 & $17.406\;(N_1=4096)$ & 8.054 & $27.528\;(N_1=4000)$ & 12.739 \\
\bottomrule
\end{tabular*}
\end{table}

\subsubsection{Shock-turbulence interaction}
High-order numerical schemes for compressible flows should be able to resolve both shock waves and complex smooth
structures. Here, we consider the shock--turbulence interaction problem proposed by Shu and Osher~\cite{Shu1989}, which
is widely used to assess the above capability of numerical schemes. In this test, a Mach 3 shock wave interacts with a
density disturbance, generating a flow field that contains both strong discontinuities and highly oscillatory smooth
structures. The initial condition is given by
\begin{equation}
(\rho_0,u_0,p_0)=
\begin{cases}
(3.857148,\,2.629369,\,10.33333), & -5 \leq x \leq -4,\\
(1+0.2\sin 5x,\,0,\,1), & \mathrm{otherwise},
\end{cases}
\qquad -5 \leq x \leq 5 .
\end{equation}
The final simulation time is $t=1.8$.

For this problem, the reference solution is computed by the WENO5-JS scheme~\cite{JiangShu1996} with 10240 uniform
cells. The numerical solutions at $t=1.8$ obtained by the proposed adaptive $\mathrm{AMAIW_4T_2\mbox{-}BVD}$ scheme are
shown in Fig.~\ref{fig:shockTurbulencediffJmax}, and the corresponding active leaf cell distributions are presented in
Fig.~\ref{fig:shockTurbulencecell}. It can be observed from Fig.~\ref{fig:shockTurbulencediffJmax} that the proposed
adaptive wavelet scheme captures the shock wave with a sharp transition while resolving the high-frequency smooth
structures generated behind the shock. As the finest resolution level increases from $J_{\max}=6$ to $J_{\max}=8$, the
oscillatory smooth structures are resolved more accurately and the numerical solution agrees better with the reference
solution.

The corresponding active leaf cell distributions in Fig.~\ref{fig:shockTurbulencecell} further illustrate the adaptive
resolution allocation. The highest-resolution leaf cells are concentrated in narrow regions around the shock wave and
steep transition zones. Meanwhile, relatively high-level cells are also allocated in the oscillatory region behind the
shock to resolve the small-scale smooth structures. These results demonstrate that the proposed adaptive multiresolution
wavelet upwind scheme can effectively identify and resolve both discontinuities and multiscale smooth structures through
a sparse representation of the solution. 
\begin{figure}[!htbp]
    \centering

    \begin{subfigure}[b]{0.45\textwidth}
        \centering
        \includegraphics[width=\linewidth]{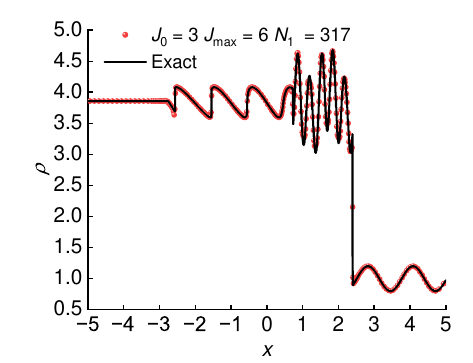}
        \caption{$J_{\max}=6$.}
        \label{fig:shockTurbulencediffJmax6}
    \end{subfigure}
    \hfill
    \begin{subfigure}[b]{0.45\textwidth}
        \centering
        \includegraphics[width=\linewidth]{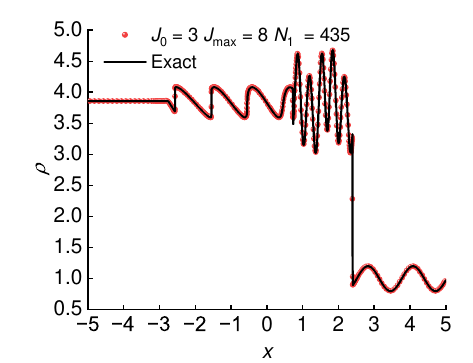}
        \caption{$J_{\max}=8$.}
        \label{fig:shockTurbulencediffJmax8}
    \end{subfigure}

    \caption{Numerical solutions at $t=1.8$ obtained by the adaptive $\mathrm{AMAIW_4T_2\mbox{-}BVD}$ scheme for the
    shock--turbulence interaction problem with $\epsilon_0=1.0\times10^{-3}$ and different finest resolution levels.}
    \label{fig:shockTurbulencediffJmax}
\end{figure}

\begin{figure}[!htbp]
    \centering

    \begin{subfigure}[b]{0.45\textwidth}
        \centering
        \includegraphics[width=\linewidth]{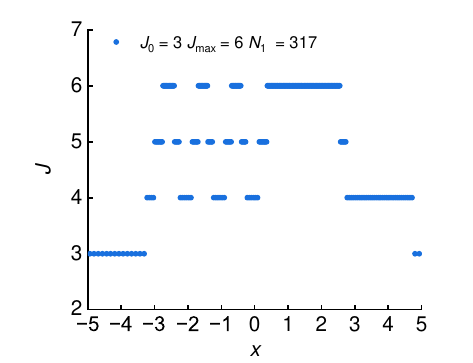}
        \caption{$J_{\max}=6$.}
        \label{fig:shockTurbulencecellJmax6}
    \end{subfigure}
    \hfill
    \begin{subfigure}[b]{0.45\textwidth}
        \centering
        \includegraphics[width=\linewidth]{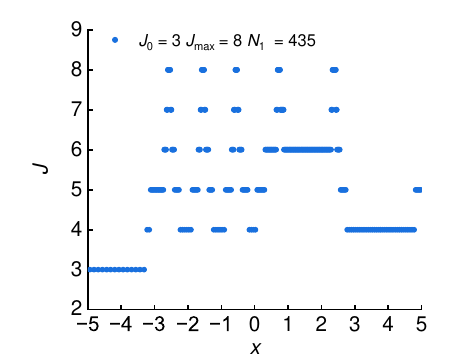}
        \caption{$J_{\max}=8$.}
        \label{fig:shockTurbulencecellJmax8}
    \end{subfigure}

    \caption{Active leaf cell distributions at $t=1.8$ obtained by the adaptive $\mathrm{AMAIW_4T_2\mbox{-}BVD}$ scheme
    for the shock--turbulence interaction problem with $\epsilon_0=1.0\times10^{-3}$ and different finest resolution
    levels.}
    \label{fig:shockTurbulencecell}
\end{figure}

\subsubsection{Two interacting blast waves}
The two interacting blast waves problem, introduced by Woodward and Colella~\cite{Woodward1984}, is a challenging
benchmark for numerical schemes due to the complex interactions between strong shock waves, contact discontinuities, and
rarefaction waves. The initial condition is given by
\begin{equation}
(\rho_0,u_0,p_0)=
\begin{cases}
(1,\,0,\,1000), & 0 \leq x < 0.1,\\
(1,\,0,\,0.01), & 0.1 \leq x < 0.9,\\
(1,\,0,\,100), & 0.9 \leq x < 1 .
\end{cases}
\end{equation}

Reflective boundary conditions are imposed at $x=0$ and $x=1$, following Ref.~\cite{HARTEN1987}. In the adaptive
framework, special care should be taken when constructing the exterior extension cell set near the physical boundary.
Specifically, the exterior cells must be generated as exact mirror images of the corresponding interior cells with
respect to the boundary. This mirror-symmetric extension ensures that the reconstructed boundary states and numerical
fluxes remain consistent with the prescribed reflective boundary condition, thereby preserving the local discrete
conservation balance near the boundary.

The reference solution at $t=0.038$ is computed by the WENO5-JS scheme~\cite{JiangShu1996} with 16384 uniform cells. The
density profiles obtained by the uniform-cell $\mathrm{AIW_4T_2\mbox{-}BVD}$ scheme and the adaptive
$\mathrm{AMAIW_4T_2\mbox{-}BVD}$ scheme are shown in Fig.~\ref{fig:tbdiffscheme}. Both schemes give satisfactory results
and sharply resolve the complex discontinuity structures without noticeable spurious oscillations. It should be noted
that, for this problem, polynomial-based shock-capturing schemes with BVD reconstruction have been reported to generate
obvious overshoots~\cite{Deng2019,Deng2020}. In contrast, the proposed wavelet upwind scheme combined with BVD
reconstruction yields non-oscillatory solutions without overshoots. This behavior benefits from the inherent low-pass
filtering property of the scaling functions, which enables the scheme to suppress high-frequency numerical perturbations
near strong discontinuities.

Moreover, the adaptive $\mathrm{AMAIW_4T_2\mbox{-}BVD}$ scheme gives results that are in agreement with those of the
corresponding uniform-cell $\mathrm{AIW_4T_2\mbox{-}BVD}$ scheme with $J=J_{\max}$. This agreement verifies the
effectiveness of performing the BVD reconstruction on the finest-level leaf cells in the proposed adaptive framework.
Meanwhile, the adaptive scheme achieves this high-resolution shock-capturing performance using a much smaller number of
active leaf cells, demonstrating its ability to resolve complex wave structures efficiently in a sparse form. These
results indicate that the proposed wavelet upwind schemes can robustly solve complex wave-interaction problems involving
strong shocks, and the adaptive version provides an efficient and stable approach for such challenging problems.

Finally, the numerical results at $t=0.038$ obtained by the proposed adaptive scheme are compared with those of the
WENO5-Z scheme~\cite{Borges2008,Don2013}, as shown in Fig.~\ref{fig:tbwWENOdiffJmax}. The number of uniform cells used
in the WENO5-Z scheme is selected so that the main wave structures, including the peak and valley profiles, are
reasonably comparable to those of the proposed adaptive wavelet scheme. Both methods capture the overall wave pattern
without visible spurious oscillations or overshoots. However, the adaptive $\mathrm{AMAIW_4T_2\mbox{-}BVD}$ scheme
resolves much sharper contact discontinuities around $x\approx0.59$ and $x\approx0.80$, requiring only about three
active leaf cells across the sharp transition and significantly fewer degrees of freedom over the whole domain. These
results demonstrates the superior contact discontinuity capturing capability of the proposed adaptive multiresolution
wavelet upwind scheme through a sparse representation.

\begin{figure}
    \centering

    \begin{subfigure}[b]{0.45\textwidth}
        \centering
        \includegraphics[width=\linewidth]{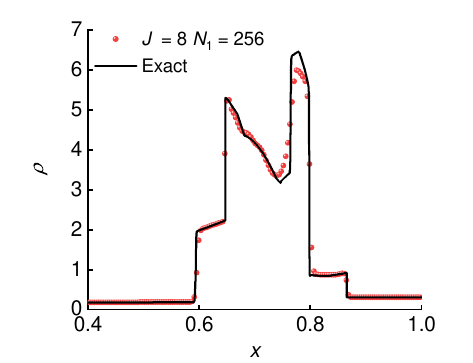}
        \caption{Uniform-cell scheme with $J=8$.}
        \label{fig:tbdiffJ8}
    \end{subfigure}
    \hfill
    \begin{subfigure}[b]{0.45\textwidth}
        \centering
        \includegraphics[width=\linewidth]{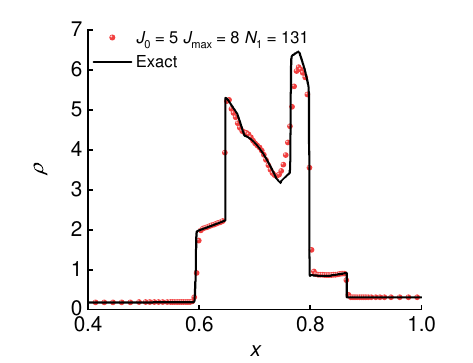}
        \caption{Adaptive scheme with $J_{\max}=8$.}
        \label{fig:tbdiffJmax8}
    \end{subfigure}

    \vspace{0.3cm}

    \begin{subfigure}[b]{0.45\textwidth}
        \centering
        \includegraphics[width=\linewidth]{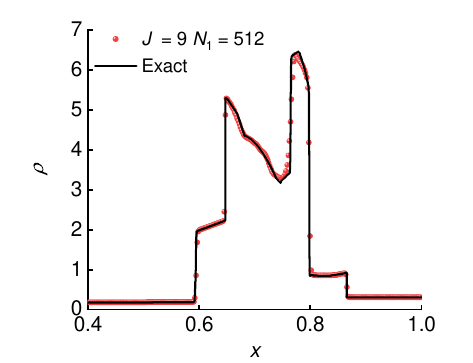}
        \caption{Uniform-cell scheme with $J=9$.}
        \label{fig:tbdiffJ9}
    \end{subfigure}
    \hfill
    \begin{subfigure}[b]{0.45\textwidth}
        \centering
        \includegraphics[width=\linewidth]{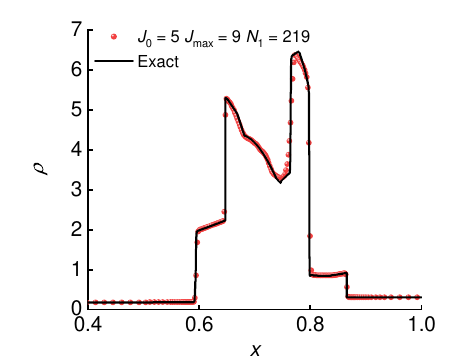}
        \caption{Adaptive scheme with $J_{\max}=9$.}
        \label{fig:tbdiffJmax9}
    \end{subfigure}

    \vspace{0.3cm}

    \begin{subfigure}[b]{0.45\textwidth}
        \centering
        \includegraphics[width=\linewidth]{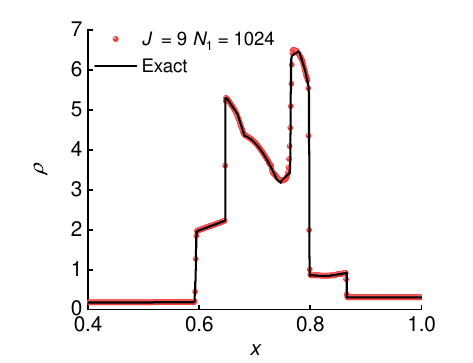}
        \caption{Uniform-cell scheme with $J=10$.}
        \label{fig:tbdiffJ10}
    \end{subfigure}
    \hfill
    \begin{subfigure}[b]{0.45\textwidth}
        \centering
        \includegraphics[width=\linewidth]{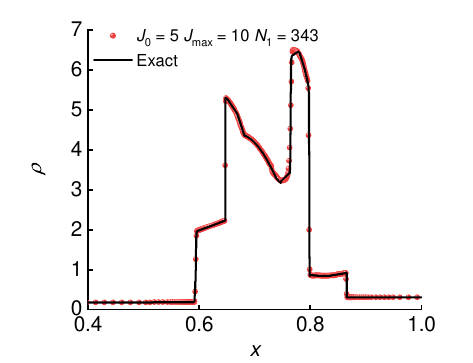}
        \caption{Adaptive scheme with $J_{\max}=10$.}
        \label{fig:tbdiffJmax10}
    \end{subfigure}
    
    \caption{Numerical solutions at $t=0.038$ of the two interacting blast waves problem obtained by the uniform-cell
$\mathrm{AIW_4T_2\mbox{-}BVD}$ scheme at $J=8,9,10$ and the adaptive $\mathrm{AMAIW_4T_2\mbox{-}BVD}$ scheme with
$\epsilon_0=1.0\times10^{-4}$, and $J_{\max}=8,9,10$.}
    \label{fig:tbdiffscheme}
\end{figure}

\begin{figure}
    \centering

    \begin{subfigure}[b]{0.45\textwidth}
        \centering
        \includegraphics[width=\linewidth]{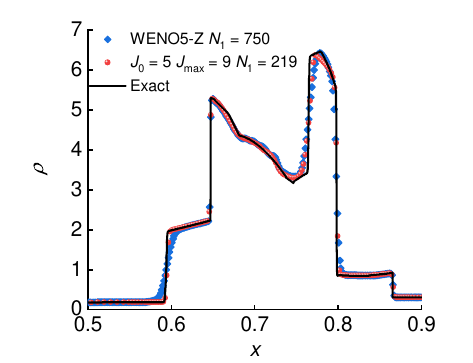}
        \caption{$J_{\max}=9$.}
        \label{fig:tbwWENOdiffJmax9}
    \end{subfigure}
    \hfill
    \begin{subfigure}[b]{0.45\textwidth}
        \centering
        \includegraphics[width=\linewidth]{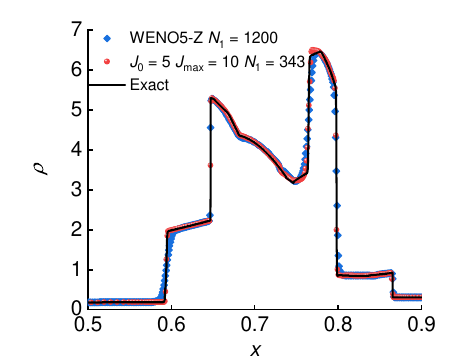}
        \caption{$J_{\max}=10$.}
        \label{fig:tbwWENOdiffJmax10}
    \end{subfigure}

    \caption{Comparison of the numerical solutions at $t=0.038$ of the two interacting blast waves problem obtained by
    the adaptive $\mathrm{AMAIW_4T_2\mbox{-}BVD}$ scheme with $\epsilon_0=1.0\times10^{-4}$ , and the WENO5-Z scheme.}
    \label{fig:tbwWENOdiffJmax}
\end{figure}
\FloatBarrier

\section{Conclusion}
\label{Sec:Conclusion}
In this study, a fourth-order conservative adaptive multiresolution average-interpolating wavelet upwind scheme with BVD
reconstruction has been developed within the finite volume framework. Asymmetric average-interpolating wavelets with
upwind properties are employed to construct the conservative discretization of the governing equations, while symmetric
average-interpolating wavelets are used for multiresolution decomposition and reconstruction in the adaptive procedure.
This formulation fully exploits the upwind stability of asymmetric wavelets and the data-compression efficiency of
symmetric wavelets within a unified cell-average wavelet framework.

A distinctive feature of the present method is that both the conservative discretization and the adaptive
multiresolution representation are constructed in terms of cell-average quantities. As a result, strict conservation can
be maintained during both numerical evolution and adaptive cell redistribution. Moreover, unlike hybrid adaptive wavelet
schemes, the present formulation avoids additional ghost-cell marking and reconstruction near coarse--fine cell
interfaces by evaluating numerical fluxes directly based on the wavelet multiresolution approximation. BVD
reconstruction is applied only at the finest resolution level, enabling effective non-oscillatory shock capturing while
avoiding unnecessary reconstruction in smooth regions.

Representative benchmark tests have demonstrated the accuracy, conservativity, error-control capability, computational
efficiency, and shock-capturing performance of the adaptive wavelet upwind scheme. The numerical results show that
shocks and contact discontinuities are sharply resolved without visible spurious oscillations or overshoots. Compared
with uniform-cell computations, the adaptive scheme significantly reduces the number of active degrees of freedom while
maintaining sufficient resolution in regions containing localized flow features. These results indicate that the
proposed conservative adaptive multiresolution average-interpolating wavelet upwind scheme provides an efficient,
robust, and reliable adaptive approach for high-resolution simulations of compressible flows. Future work will focus on
extending the present conservative adaptive wavelet upwind framework to multidimensional compressible flows.

\printcredits

\section*{Acknowledgements}
This work was supported by the National Natural Science Foundation of China (Key Program, Grant No. 12532010) and
Fundamental and Interdisciplinary Disciplines Breakthrough Plan of the Ministry of Education of China (Grant No.
JYB2025XDXM105).

\bibliographystyle{cas-model2-nums}

\bibliography{cas-refs}

\end{document}